\DeclareSymbolFont{EulerExtension}{U}{euex}{m}{n}
\DeclareMathSymbol{\euintop}{\mathop} {EulerExtension}{"52}
\DeclareMathSymbol{\euointop}{\mathop} {EulerExtension}{"48}
\def \id{\operatorname{id}}
\def \C{\mathcal{C}}
\def \k{\Bbbk}
\def \dim{\operatorname{dim}}
\def \Ext{\operatorname{Ext}}
\def \C{\mathcal{C}}
\def \D{\Delta}
\def \span{\operatorname{span}}
\def \A{\mathcal{A}}
\def \B{\mathcal{B}}
\def \C{\mathcal{C}}
\def \D{\mathcal{D}}
\def \E{\mathcal{E}}
\def \F{\mathcal{F}}
\def \G{\mathcal{G}}
\def \X{\mathcal{X}}
\def \Y{\mathcal{Y}}
\def \W{\mathcal{W}}
\numberwithin{equation}{section}
\newtheorem{theorem}{Theorem}[section]
\newtheorem{lemma}[theorem]{Lemma}
\newtheorem{proposition}[theorem]{Proposition}
\newtheorem{corollary}[theorem]{Corollary}
\newtheorem{definition}[theorem]{Definition}
\newtheorem{example}[theorem]{Example}
\newtheorem{remark}[theorem]{Remark}
\newtheorem{question}[theorem]{Question}
\newtheorem{notation}[theorem]{Notation}
\begin{document}
\title[Hopf algebras of tame corepresentation type]{Coradically graded Hopf algebras of tame corepresentation type}
\thanks{}
\author[J. Yu]{Jing Yu}
\author[G. Liu]{Gongxiang Liu}
\address{School of Mathematical Sciences, University of Science and Technology of China, Hefei 230026, People's Republic
of China}
\email{yujing46@ustc.edu.cn}
\address{School of Mathematics, Nanjing University, Nanjing 210093, China}
\email{gxliu@nju.edu.cn}

\thanks{2020 \textit{Mathematics Subject Classification}. 16T05, 16G60 (primary), 16G20, 16G10 (secondary).}
\keywords{Hopf algebras, Dual Chevalley property, link quiver, Tame corepresentation type}
\maketitle

\date{}
\begin{abstract}
Let $\k$ be an algebraically closed field of characteristic $0$ and let $H$ be a finite-dimensional Hopf algebra over $\k$ with the dual Chevalley property. In this paper, we give a description of the link quiver of $H$ for different corepresentation types. Moreover, we show that $\operatorname{gr}^c(H)$ is of tame corepresentation type if and only if $\operatorname{gr}^c(H)\cong (\k\langle x,y\rangle/I)^* \times H_0$ for some special ideals $I$. Using the methods of link quivers and bosonization, we then discuss which of the above ideals occur when $(\k\langle x,y\rangle/I)^* \times H_0$ is a Hopf algebra of tame corepresentation type under certain assumptions.
\end{abstract}

\maketitle
\section{Introduction}
According to Drozd's fundamental result \cite{Dro79}, every finite-dimensional algebra belongs to exactly one of three representation types: finite, tame, or wild. Inspired by this trichotomy, classifying classes of algebras by their representation type has become a central theme; see, for example, \cite{Ari05, Ari17, Ari21, DEMN99, EN01, KOS11, Rin75, Rin78}.

The classification by representation type for finite-dimensional Hopf algebras has attracted considerable interest, particularly for pointed Hopf algebras and their duals, the elementary Hopf algebras. For modular group algebras of finite groups, a block is of finite representation type if and only if the corresponding defect groups are cyclic, and it is tame if and only if $\operatorname{char}\k=2$ and its defect groups are dihedral, semidihedral, or generalized quaternion \cite{Ben98, BD82, Erd90, Hig54}. Among small quantum groups, only $u_q(\mathfrak{sl}_2)$ is tame; all others are wild \cite{Cil97, Sut94, Xia97}. Farnsteiner and his collaborators classified all cocommutative Hopf algebras according to their representation type \cite{Far06, FS02, FS07, FV00, FV03}. The classification of elementary (pointed) Hopf algebras of finite and tame (co)representation type was given by the second author and his collaborators from 2006 to 2013 \cite{LL07,HL09, Liu06, Liu13}.

Meanwhile, Hopf algebras with the (dual) Chevalley property have been intensively studied by many authors; see, for example, \cite{ABM12, AEG01, AGM17, Mom13, Li22a, Li22b, LL22, LZ19, Shi19, ZGH21}. These algebras constitute a natural generalization of elementary (or pointed) Hopf algebras. Our goal is to classify finite-dimensional Hopf algebras with the dual Chevalley property according to their corepresentation type. Here by the dual Chevalley property we mean that its coradical is a Hopf subalgebra.

In \cite{YLL23}, the authors proved that a finite-dimensional Hopf algebra $H$ with the dual Chevalley property is of finite corepresentation type if and only if it is coNakayama, if and only if the link quiver $\mathrm{Q}(H)$ of $H$ is a disjoint union of basic cycles, if and only if the link-indecomposable component $H_{(1)}$ containing $\k1$ is a pointed Hopf algebra and the link quiver of $H_{(1)}$ is a basic cycle.

This paper aims to classify Hopf algebras with the dual Chevalley property that are of tame corepresentation type. The link quiver serves as the main tool in the study of finite-dimensional Hopf algebras with the dual Chevalley property and of finite corepresentation type. Its structure can be described using multiplicative matrices and primitive matrices (see \cite{YLL23}). Let $\mathcal{S}$ denote the set of all simple subcoalgebras of a Hopf algebra $H$ with the dual Chevalley property. The set $^{\C}{\mathcal{P}}^{\D}$ of a complete family of non-trivial $(\C, \D)$-primitive matrices can be viewed as the set of arrows from vertex $D$ to vertex $C$. Define
$
{}^{\C}\mathcal{P}=\bigcup_{\D\in\mathcal{S}}{}^{\C}{\mathcal{P}}^{\D},
{\mathcal{P}}^{\D}=\bigcup_{\C\in\mathcal{S}}{}^{\C}{\mathcal{P}}^{\D}, \mathcal{P}=\bigcup_{\C\in\mathcal{S}} {}^{\C}\mathcal{P}.
$
We may also view ${\mathcal{P}^{\D}}$ as the set of arrows with start vertex $D$ and ${^{\C}\mathcal{P}}$ as the set of arrows with end vertex $C.$ Following the procedure in \cite[Section 5]{YLL23}, we characterize the link quiver of finite-dimensional Hopf algebras with the dual Chevalley property of finite or tame corepresentation type. This appears as Theorem \ref{thm:corepandquiver} in this paper.
\begin{theorem}\label{theorem1}
Let $\k$ be an algebraically closed field of characteristic 0 and let $H$ be a finite-dimensional Hopf algebra over $\k$ with the dual Chevalley property. Denote ${}^1\mathcal{S}=\{C\in\mathcal{S}\mid \k1+C\neq \k1\wedge C\}$.
\begin{itemize}
  \item[(1)]$H$ is of finite corepresentation type if and only if $\mid{}^1\mathcal{P}\mid=1$ and ${}^1\mathcal{S}=\{\k g\}$ for some group-like element $g\in G(H)$.
  \item[(2)]If $H$ is of tame corepresentation type, then one of the following two cases occurs:
  \begin{itemize}
  \item[(i)]$\mid{}^1\mathcal{P}\mid=2$ and for any $C\in {}^1\mathcal{S}$, $\dim_{\k}(C)=1$;
  \item[(ii)]$\mid{}^1\mathcal{P}\mid=1$ and ${}^1\mathcal{S}=\{C\}$ for some $C\in\mathcal{S}$ with $\dim_{\k}(C)=4$.
  \end{itemize}
  \item[(3)]If one of the following holds, then $H$ is of wild corepresentation type.
  \begin{itemize}
  \item[(i)]$\mid{}^1\mathcal{P}\mid\geq3$;
  \item[(ii)]$\mid{}^1\mathcal{P}\mid=2$ and there exists some $C\in{}^1\mathcal{S}$ with $\dim_{\k}(C)\geq 4$;
  \item[(iii)]$\mid{}^1\mathcal{P}\mid=1$ and ${}^1\mathcal{S}=\{C\}$ for some $C\in\mathcal{S}$ with $\dim_{\k}(C)\geq 9$.
  \end{itemize}
  \end{itemize}
\end{theorem}

Denote $\mathcal{S}^1=\{C\in\mathcal{S}\mid C+\k1\neq C\wedge \k1\}$. Note that$\mid{}^1\mathcal{P}\mid=\mid\mathcal{P}{}^1\mid$ and $C\in{}^1\mathcal{S}$ if and only if $S(C)\in\mathcal{S}^1$ (see Lemma \ref{lemma:P^1=1^P}). Using Theorem \ref{theorem1}, we know that if $H$ is of tame corepresentation type, then one of the following three cases occurs:
  \begin{itemize}
  \item[(i)]$\mid\mathcal{P}{}^1\mid=1$ and $\mathcal{S}{}^1=\{C\}$ for some $C\in\mathcal{S}$ with $\dim_{\k}(C)=4$;
  \item[(ii)]$\mid\mathcal{P}{}^1\mid=2$ and $\mathcal{S}{}^1=\{\k g\}$ for some $g\in G(H)$;
  \item[(iii)]$\mid\mathcal{P}{}^1\mid=2$ and $\mathcal{S}{}^1=\{\k g, \k h\}$ for some $g, h\in G(H)$.
  \end{itemize}
Furthermore, we determine the structure of finite-dimensional coradically graded Hopf algebras of tame corepresentation type. See Theorem \ref{thm:tamestructure}, which states:
\begin{theorem}\label{theorem2}
Let $\k$ be an algebraically closed field of characteristic 0 and let $H$ be a finite-dimensional Hopf algebra over $\k$ with the dual Chevalley property. Then $\operatorname{gr}^c(H)$ is of tame corepresentation type if and only if $$\operatorname{gr}^c(H)\cong (\k\langle x,y\rangle/I)^* \times H_0$$ for some ideal $I$  of the following forms:
\begin{itemize}
  \item[(1)]$I=(x^2-y^2, yx-ax^2, xy)$ for $0\neq a\in\k$;
  \item[(2)]$I=(x^2, y^2, (xy)^m-a(yx)^m)$ for $0\neq a\in\k$ and $m\geq 1$;
  \item[(3)]$I=(x^n-y^n, xy, yx)$ for $n\geq 2$;
  \item[(4)]$I=(x^2, y^2, (xy)^mx-(yx)^my)$ for $m\geq1$.
\end{itemize}
\end{theorem}
According to \cite[Theorem 4.1.2]{Bes97}, if $R$ is a Hopf algebra in ${}^{H_0}_{H_0}\mathcal{YD}$, then its bosonization $R\times H_0$ yields a Hopf algebra.
For a tame algebra $A$, the above theorem does not guarantee the existence of a finite-dimensional semisimple Hopf algebra $H^\prime$ such that $A^*$ becomes a braided Hopf algebra in ${}^{H^\prime}_{H^\prime}\mathcal{YD}$. Consequently, for the ideals $I$ listed in the theorem above, it remains unclear whether $(\k\langle x,y\rangle/I)^* \times H^\prime$ forms a Hopf algebra. To address this question, we employ the methods of link quivers and bosonization, analyzing the three cases separately.

We consider case (i) under some assumptions. See Proposition \ref{prop:I=I2}, which states:
\begin{proposition}
Let $\operatorname{gr}^c(H)\cong (\k\langle x,y\rangle/I)^* \times H_0$ be a finite-dimensional coradically graded Hopf algebra over $\k$ of tame corepresentation type.
Suppose $\mathcal{P}{}^1=\{\X\}$, $\mathcal{S}{}^1=\{C\}$ for some $C\in\mathcal{S}$ with $\dim_{\k}(C)=4$, and the invertible matrix $K$ in Lemma \ref{lem:invertibleK} is diagonal, namely
$$
K= \left(\begin{array}{cccc}
\alpha_1\\
&\alpha_2\\
&&\alpha_3\\
&&&\alpha_4
 \end{array}\right).
$$
If, in addition, $R_H$ is generated by $u, v$, then
\begin{itemize}
  \item[(1)]$I=(x^2, y^2, (xy)^m-a(yx)^m)$ for $0\neq a\in\k$ and $m\geq1$;
  \item[(2)]$\alpha_1=\alpha_4=-1$;
  \item[(3)]$a=(-1)^{m-1}\alpha_2^m$ or $a=(-1)^{m-1}\alpha_3^m$;
  \item[(4)]$\alpha_2\alpha_3$ is an $m$th primitive root of unity.
\end{itemize}
\end{proposition}

In fact, when studying the properties of finite-dimensional coradically graded Hopf algebras over $\k$ of tame corepresentation type, it suffices to consider their link-indecomposable component containing $\k 1$. This appears as Proposition \ref{HtameiffH0tame}:
\begin{proposition}
Let $H$ be a finite-dimensional coradically graded Hopf algebra over $\k$. Then $H$ is of tame corepresentation type if and only if $H_{(1)}$ is of tame corepresentation type.
\end{proposition}
With the help of the preceding proposition, we can consider cases (ii) and (iii). See Proposition \ref{prop:g=g,gnoh}, which states:
\begin{proposition}
Let $\operatorname{gr}^c(H)\cong (\k\langle x,y\rangle/I)^* \times H_0$ be a finite-dimensional coradically graded Hopf algebra over $\k$ of tame corepresentation type.
\begin{itemize}
  \item[(1)]If $\mid\mathcal{P}{}^1\mid=2$ and $\mathcal{S}{}^1=\{\k g\}$ for some $g\in G(H)$, then $I=(x^2, y^2, xy+yx)$;
  \item[(2)]If $\mid\mathcal{P}{}^1\mid=2$ and $\mathcal{S}{}^1=\{\k g, \k h\}$ for some $g, h\in G(H)$, then $I=(x^2, y^2, (xy)^m-a(yx)^m)$ for $0\neq a\in\k$ and $m\geq1$.
\end{itemize}
\end{proposition}

The organization of this paper is as follows. In Section \ref{section1}, we recall the definitions of multiplicative and primitive matrices and give a construction of a complete family of non-trivial $(\mathcal{C}, \mathcal{D})$-primitive matrices. We discuss the properties of the link quiver of Hopf algebras with the dual Chevalley property in Section \ref{section2}. Section \ref{section3} is devoted to characterizing the link quiver of Hopf algebras with the dual Chevalley property of tame corepresentation type. In Section \ref{section4}, we determine the structure of coradically graded Hopf algebras $H$ of tame corepresentation type. We show in Section \ref{section5} that $H$ is of tame corepresentation type if and only if the link-indecomposable component $H_{(1)}$ containing $\k 1$ is of tame corepresentation type. Section \ref{section6} discusses which ideals occur when $(\Bbbk \langle x,y\rangle / I)^{*} \times H_{0}$ is a finite-dimensional coradically graded Hopf algebra of tame corepresentation type under some assumptions. Finally, some examples and applications are given in Section \ref{section7}.

\section{Preliminaries} \label{section1}
Throughout this paper $\k$ denotes an \textit{algebraically closed field of characteristic $0$} and all spaces are over $\k$. We refer to \cite{Mon93} for the basics about Hopf algebras.

\subsection{Multiplicative matrices and primitive matrices}\label{subsection1.1}
In this subsection, let $(H,\Delta,\varepsilon)$ be a coalgebra over $\k$.
Denote the coradical filtration of $H$ by $\{H_n\}_{n\geq0}$ and the set of all simple subcoalgebras of $H$ by $\mathcal{S}$.

We first recall the definition of multiplicative matrices.
\begin{definition}\emph{(}\cite[Definition 2.3]{Li22a}\emph{)}
Let $(H,\Delta,\varepsilon)$ be a coalgebra over $\k$.
\begin{itemize}
  \item[(1)] A square matrix $\G=(g_{ij})_{r\times r}$ over $H$ is said to be multiplicative, if for any $1\leq i,j \leq r$, we have $\Delta(g_{ij})=\sum\limits_{t=1}^r g_{it}\otimes g_{tj}$ and $\varepsilon(g_{ij})=\delta_{i, j}$, where $\delta_{i, j}$ denotes the Kronecker notation;
  \item[(2)] A multiplicative matrix $\C$ is said to be basic, if its entries are linearly independent.
\end{itemize}
\end{definition}
Multiplicative matrices over a coalgebra generalize the notion of group-like elements. The entries of any basic multiplicative matrix $\C$ span a simple subcoalgebra $C$ of $H$. Conversely, for any simple coalgebra $C$ over $\k$, there exists a basic multiplicative matrix $\C$ whose entries span $C$ (see \cite{LZ19}, \cite{Li22a}). Moreover, by \cite[Lemma 2.4]{Li22a}, the basic multiplicative matrix of a simple coalgebra $C$ is unique up to similarity.

Next, we recall the definition of primitive matrices, which serve as a non-pointed analogue of primitive elements.
\begin{definition}\emph{(}\cite[Definition 3.2]{LZ19} and \cite[Definition 4.4]{Li22b}\emph{)}
Let $(H,\Delta,\varepsilon)$ be a coalgebra over $\k$. Suppose $\C=(c_{ij})_{r\times r}$ and $\D=(d_{ij})_{s\times s}$ are basic multiplicative matrices over $H$.
\begin{itemize}
  \item[(1)] A matrix $\X=(x_{ij})_{r\times s}$ over $H$ is said to be $(\C, \D)$-primitive, if $$\Delta(x_{ij})=\sum\limits_{k=1}^r c_{ik}\otimes x_{kj}+\sum\limits_{t=1}^s x_{it}\otimes d_{tj}$$ holds for any $1\leq i\leq r, 1\leq j\leq s$;
  \item[(2)] A primitive matrix $\X$ is said to be non-trivial, if there exists some entry of $\X$ which does not belong to the coradical $H_0$.
\end{itemize}
\end{definition}
For any matrix $\X=\left(x_{ij}\right)_{r\times s}$ over $H$, denote the matrix $\left(\overline{x_{ij}}\right)_{r\times s}$ by $\overline{\X}$, where $\overline{x_{ij}}=x_{ij}+H_0\in H/H_0$. The subspace of $H/H_0$ spanned by the entries of $\overline{\X}$ is denoted by $\operatorname{span}(\overline{\X})$.
Let $\pi: H_1 \longrightarrow H_1/H_0$ be the quotient map. For any $\overline{h}\in H_1/H_0$, define
\begin{eqnarray}\label{comodulestructure}
\rho_L(\bar{h})=(\id\otimes\pi)\Delta(h),\;\;\rho_R(\bar{h})=(\pi\otimes\id)\Delta(h).
\end{eqnarray}
Then $(H_1/H_0, \rho_L, \rho_R)$ is an $H_0$-bicomodule. Suppose that $\X_{r\times s}=\left(x_{ij}\right)_{r\times s}$ is a non-trivial $(\C, \D)$-primitive matrix.
By
\cite[Lemma 2.4]{YLL23}, $(\span(\overline{\X}), \rho_L, \rho_R)$ with the induced bicomodule structure is a simple $C$-$D$-bicomodule satisfying $\dim_{\k}(\span(\overline{\X}))=rs.$
Conversely, if $W$ is a subspace of $H_1$ such that $\overline{W}$ is a simple $C$-$D$-sub-bicomodule of $H_1/H_0$, then there exists a non-trivial $(\C, \D)$-primitive matrix $\W$ such that $\span(\overline{\W})=\overline{W}$ (see \cite[Lemma 2.10]{YLL23}).

Recall that a family $\{e_C\}_{C\in \mathcal{S}}$ in $H^*$ is called a family of \textit{coradical orthonormal idempotents} (see \cite[Section 1]{Rad78}) if $$e_C|_D=\delta_{C,D}\varepsilon|_D,\;\;\;\;e_Ce_D=\delta_{C,D}e_C\;\;\;\;
      (\text{for any}\;C,D\in\mathcal{S}),\;\;\;\;\sum\limits_{C\in\mathcal{S}}e_C=\varepsilon.$$
The existence of such a family is established in \cite[Lemma 2]{Rad78}.
Given such a family $\{e_C\}_{C\in \mathcal{S}}$ and an $H_0$-$H_0$-bicomodule with left comodule structure $\delta_L$ and right comodule structure $\delta_R$, define for $v\in V$:
$${}^Cv=v\leftharpoonup e_C=(e_C\otimes \id)\delta_L(v) ,\;\;\;v^D=e_D\rightharpoonup v=(\id\otimes e_D)\delta_R(v),$$
$${}^Cv{}^D=e_D\rightharpoonup v\leftharpoonup e_C.
$$

Using these notations, we obtain the following decompositions.
\begin{lemma}\emph{(}\cite[Lemma 2.8]{YLL23}\emph{)}\label{lemma:sum2}
As an $H_0$-$H_0$-bicomodule, $H_1/H_0=\bigoplus\limits_{C, D\in\mathcal{S}}({}^CH_1{}^D+H_0)/H_0$. Moreover, ${^C(H_1/H_0)^D}=({}^CH_1{}^D+H_0)/H_0$ holds for any $C, D\in\mathcal{S}$.
\end{lemma}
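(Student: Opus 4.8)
The plan is to realize the claimed decomposition as the image decomposition of a complete family of orthogonal idempotent operators on $H_1/H_0$ built from the coradical orthonormal idempotents $\{e_C\}_{C\in\mathcal{S}}$. For each pair $C,D\in\mathcal{S}$ I would define $P_{C,D}\colon H\to H$ by $P_{C,D}(h)=e_D\rightharpoonup h\leftharpoonup e_C={}^Ch{}^D$. Since the left and right hit actions of $H^*$ on $H$ commute and each makes every term of the coradical filtration into an $H^*$-submodule (a standard fact, see \cite{Mon93}), $P_{C,D}$ preserves both $H_0$ and $H_1$ and therefore descends to an operator $\bar P_{C,D}$ on $H_1/H_0$ with $\bar P_{C,D}(\bar h)=\overline{{}^Ch{}^D}$ and image exactly $({}^CH_1{}^D+H_0)/H_0$.

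Next I would record the algebraic properties of these operators. Using the module relations $(fg)\rightharpoonup h=f\rightharpoonup(g\rightharpoonup h)$ and $h\leftharpoonup(fg)=(h\leftharpoonup f)\leftharpoonup g$ together with the orthogonality $e_Ce_D=\delta_{C,D}e_C$, one computes $P_{C',D'}P_{C,D}=\delta_{C,C'}\delta_{D,D'}P_{C,D}$; and the completeness relation $\sum_{C\in\mathcal{S}}e_C=\varepsilon$ (the unit of $H^*$, acting as the identity) gives $\sum_{C,D}P_{C,D}=\id$, since $\sum_D e_D\rightharpoonup(\sum_C h\leftharpoonup e_C)=\varepsilon\rightharpoonup(h\leftharpoonup\varepsilon)=h$. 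For a fixed $h$ only finitely many summands are nonzero because $\Delta(h)$ meets only finitely many simple subcoalgebras, so the sum is well defined. Passing to the quotient, $\{\bar P_{C,D}\}$ is a complete family of orthogonal idempotents on $H_1/H_0$, which yields the internal direct sum $H_1/H_0=\bigoplus_{C,D}\im\bar P_{C,D}=\bigoplus_{C,D}({}^CH_1{}^D+H_0)/H_0$.

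For the ``moreover'' assertion I would check that this is a decomposition of $H_0$-$H_0$-bicomodules and identify each summand with the corresponding bicomodule isotypic piece. The point is that the operator-level hit action agrees with the bicomodule hit action on $(H_1/H_0,\rho_L,\rho_R)$: unwinding the definitions in \eqref{comodulestructure}, one has $(e_C\otimes\id)\rho_L(\bar h)=\overline{h\leftharpoonup e_C}=\overline{{}^Ch}$ and $(\id\otimes e_D)\rho_R(\bar h)=\overline{e_D\rightharpoonup h}=\overline{h{}^D}$, so $\bar P_{C,D}$ is precisely the composite $e_D\rightharpoonup(-)\leftharpoonup e_C$ computed inside the bicomodule $H_1/H_0$. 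Hence each $\im\bar P_{C,D}$ is an $H_0$-$H_0$-subbicomodule and equals ${}^C(H_1/H_0){}^D$ by definition, which is the identity ${}^C(H_1/H_0){}^D=({}^CH_1{}^D+H_0)/H_0$.

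The one place where coalgebra specifics genuinely enter — and the step I would treat most carefully — is the initial reduction: that the hit actions are stable on the coradical filtration (so that $P_{C,D}$ descends) and that, for $h$ lying in a simple subcoalgebra $C'$, the projections collapse to the single diagonal term $P_{C',C'}(h)=h$. The latter follows from $e_C|_{C'}=\delta_{C,C'}\varepsilon|_{C'}$ and $\Delta(C')\subseteq C'\otimes C'$, and it is what guarantees that $H_0$ is respected block by block, so that the quotient operators $\bar P_{C,D}$ are well defined and their images meet $H_0$ in the expected way. Once this is in place, the remainder is the formal orthogonal-idempotent decomposition argument above.
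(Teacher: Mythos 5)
Your proof is correct and uses essentially the intended approach: this paper does not prove Lemma \ref{lemma:sum2} itself but imports it from \cite[Lemma 2.8]{YLL23}, and the argument rests on exactly the machinery you employ, namely the coradical orthonormal idempotents, the resulting complete orthogonal projections $h\mapsto {}^Ch{}^D$ (which preserve $H_0$ and $H_1$ because hit actions preserve subcoalgebras, and hence descend to $H_1/H_0$), and the compatibility \eqref{comodulestructure} between the hit actions and the bicomodule structure. The one step you compress into ``hence'' --- that $\im \bar P_{C,D}={}^C(H_1/H_0){}^D$ is genuinely a sub-bicomodule and not merely a subspace --- is the standard isotypic decomposition of bicomodules over the cosemisimple coalgebra $H_0$, and deserves either the one-line coassociativity check or a citation to \cite[Proposition 2.2]{LZ19}, to which the paper itself points for properties of the $e_C$.
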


\begin{lemma}\emph{(}\cite[Corollary 2.11]{YLL23}\emph{)}\label{lemma:complete}
There exists a family $\{\X^{(\gamma)}\}_{\gamma\in\mathit{\Gamma}}$ of non-trivial $(\C, \D)$-primitive matrices such that
\begin{eqnarray}\label{equation:direct sum}
{^C(H_1/H_0)^D}=({}^CH_1{}^D+H_0)/H_0=\bigoplus\limits_{\gamma\in\mathit{\Gamma}}\span(\overline{\X^{(\gamma)}}).
\end{eqnarray}
\end{lemma}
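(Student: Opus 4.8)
The plan is to deduce this from the two statements recorded immediately above it: that each non-trivial $(\C,\D)$-primitive matrix $\X$ gives a \emph{simple} $C$-$D$-sub-bicomodule $\span(\overline{\X})$ of ${}^C(H_1/H_0)^D$, and, conversely, that every simple $C$-$D$-sub-bicomodule of ${}^C(H_1/H_0)^D$ is of the form $\span(\overline{\W})$ for some non-trivial $(\C,\D)$-primitive matrix $\W$ (this is \cite[Lemma 2.10]{YLL23}). With these in hand, the entire content of the corollary reduces to the existence of a decomposition of the $C$-$D$-bicomodule ${^C(H_1/H_0)^D}=({}^CH_1{}^D+H_0)/H_0$ into simple $C$-$D$-sub-bicomodules.

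First I would establish that ${^C(H_1/H_0)^D}$ is completely reducible as a $C$-$D$-bicomodule. Since $C$ and $D$ are simple subcoalgebras, they are finite-dimensional and cosemisimple, so their dual algebras $C^*$ and $D^*$ are finite-dimensional semisimple. A $C$-$D$-bicomodule is precisely a right comodule over the coalgebra $C^{\mathrm{cop}}\otimes D$: a left $C$-coaction is the same datum as a right $C^{\mathrm{cop}}$-coaction, and the bicomodule axiom is exactly the condition that the two coactions assemble into a single $C^{\mathrm{cop}}\otimes D$-coaction. The dual algebra of $C^{\mathrm{cop}}\otimes D$ is $(C^*)^{\mathrm{op}}\otimes D^*$. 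Because $\k$ is algebraically closed (in particular perfect), the tensor product of two finite-dimensional semisimple $\k$-algebras is again semisimple; hence $(C^*)^{\mathrm{op}}\otimes D^*$ is semisimple and $C^{\mathrm{cop}}\otimes D$ is cosemisimple. As every comodule over a cosemisimple coalgebra is a direct sum of simple comodules, ${^C(H_1/H_0)^D}$ decomposes as $\bigoplus_{\gamma\in\mathit{\Gamma}}\overline{W^{(\gamma)}}$ with each $\overline{W^{(\gamma)}}$ a simple $C$-$D$-sub-bicomodule, for a suitable index set $\mathit{\Gamma}$.

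Finally I would lift this decomposition to primitive matrices. For each $\gamma\in\mathit{\Gamma}$, applying \cite[Lemma 2.10]{YLL23} to the simple sub-bicomodule $\overline{W^{(\gamma)}}$ produces a non-trivial $(\C,\D)$-primitive matrix $\X^{(\gamma)}$ with $\span(\overline{\X^{(\gamma)}})=\overline{W^{(\gamma)}}$. Substituting these into the decomposition of the previous paragraph yields exactly \eqref{equation:direct sum}, completing the argument.

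The step I expect to be the main obstacle is the complete reducibility in the second paragraph: one must invoke the correct comodule–algebra dictionary for bicomodules and use perfectness of $\k$ so that the tensor product of the semisimple algebras $(C^*)^{\mathrm{op}}$ and $D^*$ stays semisimple. Once cosemisimplicity of $C^{\mathrm{cop}}\otimes D$ is secured, the remaining steps are formal and the corollary follows at once from \cite[Lemma 2.10]{YLL23}.
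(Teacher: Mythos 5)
Your proposal is correct, and it takes what is essentially the intended route: this paper does not prove the lemma itself but imports it from \cite[Corollary 2.11]{YLL23}, and the two ingredients it records immediately beforehand (that $\span(\overline{\X})$ is a simple $C$-$D$-sub-bicomodule, and that every simple sub-bicomodule of $({}^CH_1{}^D+H_0)/H_0$ arises as $\span(\overline{\W})$ for some non-trivial $(\C,\D)$-primitive matrix $\W$) are exactly the ones you combine with complete reducibility of ${}^C(H_1/H_0)^D$ over the cosemisimple coalgebra $C^{\mathrm{cop}}\otimes D$. The only remark worth adding is that over the algebraically closed field $\k$ the perfectness argument can be shortcut: $C^*$ and $D^*$ are matrix algebras, so $(C^*)^{\mathrm{op}}\otimes D^*\cong \mathrm{M}_{rs}(\k)$ is simple, which gives cosemisimplicity at once and also recovers the fact that every simple $C$-$D$-bicomodule has dimension $rs$.
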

\begin{definition}\emph{(}\cite[Definition 2.12]{YLL23}\emph{)}
A family of non-trivial $(\C, \D)$-primitive matrices $\{\X^{(\gamma)}\}_{\gamma\in\mathit{\Gamma}}$ satisfying the property of (\ref{equation:direct sum}) in Lemma \ref{lemma:complete} is said to be complete.
\end{definition}

\subsection{Constructions of a complete family of non-trivial primitive matrices}\label{subsection1.2}
Recall that a finite-dimensional Hopf algebra has the \textit{dual Chevalley property}, if its coradical $H_0$ is a Hopf subalgebra. In this paper, we still use the term \textit{dual Chevalley property} to indicate a Hopf algebra $H$ with its coradical $H_0$ as a Hopf subalgebra, even if $H$ is infinite-dimensional.

In this subsection, let $H$ be a Hopf algebra over $\k$ with the dual Chevalley property. Two matrices $\A$ and $\A^\prime$ over $H$ are \textit{similar}, denoted $\A\sim\A^\prime$, if there exists an invertible matrix $L$ over $\k$ such that $\A^\prime=L\A L^{-1}$.
For matrices $\A=(a_{ij})_{r\times s}$ and $\B=(b_{ij})_{u\times v}$ over $H$, define the operations $\A\odot \B$ and $\A\odot^\prime \B$ as follows:
 $$\A\odot \B=
\left(\begin{array}{ccc}
      a_{11}\B& \cdots &  a_{1s}\B  \\
      \vdots  & \ddots & \vdots  \\
      a_{r1}\B&  \cdots & a_{rs}\B
    \end{array}\right),\;\;
\A\odot^\prime \B=\left(\begin{array}{cccc}
      \A b_{11} &   \cdots &  \A b_{1v} \\
      \vdots &  \ddots & \vdots  \\
      \A b_{u1} &  \cdots & \A b_{uv}
    \end{array}\right).$$

By \cite[Proposition 2.6]{Li22a}, for any $(\C, \D)$-primitive matrix $\X$, there exist invertible matrices $L_{\B, \C}, L_{\B, \D}$ over $\k$ such that
\begin{eqnarray}
&&\left(\begin{array}{cc}
    L_{\B, \C}  \\
    & L_{\B, \D}
  \end{array}\right)\left(\B\odot^\prime
\left(\begin{array}{cc}
\C&\X\\
0&\D
 \end{array}\right)
 \right)
 \left(\begin{array}{cc}
    L_{\B, \C}^{-1}  \\
    & L_{\B, \D}^{-1}
  \end{array}
 \right)\notag\\
&=&\left(\begin{array}{cccccc}
    \E_1 &  &  & \X_{11} & \cdots & \X_{1u_{(\B, \D)}}  \\
    & \ddots &  & \vdots &  & \vdots  \\
     & & \E_{u_{(\B, \C)}} & \X_{u_{(\B, \C)}1} & \cdots & \X_{u_{(\B, \C)}u_{(\B, \D)}}  \\
     &  &  & \F_1 &  &   \\
     & 0 &  &  & \ddots &   \\
     &  &  &  &  & \F_{u_{(\B, \D)}}
  \end{array}\right),
\end{eqnarray}
 where $\E_1, \cdots, \E_{u_{(\B, \C)}}, \F_1, \cdots, \F_{u_{(\B, \D)}}$ are the given basic multiplicative matrices.
Combining \cite[Remark 2.5 and Lemma 2.7]{Li22a} and \cite[Remark 3.2]{LZ19}, we deduce that each $\X_{ij}$ is a $(\E_i, \F_j)$-primitive matrix. Note that cosemisimple coalgebra $BC$ (resp., $BD$) decomposes as a direct sum of simple subcoalgebras, and $u_{(\B, \C)}$ (resp., $u_{(\B, \D)}$) is precisely the number of such simple subcoalgebras. In particular, we set $L_{1, \C}=L_{\C, 1}=I$, where $I$ is the identity matrix over $\k$.

With the notations above, we have
\begin{lemma}\label{Lemma:CXno0}\emph{(}\cite[Lemma 3.2]{YLL23}\emph{)}
For any $B, C, D\in\mathcal{S}$ with basic multiplicative matrices $\B, \C, \D$ respectively. If $\X$ is a non-trivial $(\C, \D)$-primitive matrix, then
\begin{itemize}
  \item[(1)]For each $1\leq i\leq u_{(\B, \C)}$, there is some $1\leq j\leq u_{(\B, \D)}$ such that $\X_{ij}$ is non-trivial;
  \item[(2)]For each $1\leq j\leq u_{(\B, \D)}$, there is some $1\leq i\leq u_{(\B, \C)}$ such that $\X_{ij}$ is non-trivial.
  \end{itemize}
\end{lemma}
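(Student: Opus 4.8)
The plan is to translate the statement into the computation of a single class in $\Ext^1$ inside the (rigid, finite) monoidal category $\Corep(H)$ of left $H$-comodules, and then to read off both assertions from the exactness of tensoring by the comodule attached to $B$ together with the duality supplied by the antipode; at the level of matrices this duality is nothing but the invertibility $\B^{-1}=S(\B)$ of a basic multiplicative matrix.

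First I would pass from matrices to comodules. Since every simple subcoalgebra lies in $H_0$, the simple $H$-comodules coincide with the simple $H_0$-comodules, and I write $M_B,M_C,M_D,M_{E_i},M_{F_j}$ for the simple left comodules spanned (after reduction) by $\B,\C,\D,\E_i,\F_j$. The $2\times2$ block multiplicative matrix $\left(\begin{smallmatrix}\C&\X\\0&\D\end{smallmatrix}\right)$ is precisely an extension $\nu\colon 0\to M_C\to V'\to M_D\to 0$ of left $H$-comodules, and $\X$ is non-trivial exactly when some entry leaves $H_0$, i.e. exactly when $\nu$ does not split; thus $\X$ non-trivial $\iff 0\neq\nu\in\Ext^1_{\Corep(H)}(M_D,M_C)$. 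The operation $\X\mapsto\B\odot'\X$ followed by the simultaneous block–diagonalisation (\ref{equation:BX}) is the application of the exact functor $M_B\otimes(-)$: one has $M_B\otimes M_C\cong\bigoplus_{i}M_{E_i}$ and $M_B\otimes M_D\cong\bigoplus_{j}M_{F_j}$, so that $M_B\otimes\nu$ is an extension of $\bigoplus_j M_{F_j}$ by $\bigoplus_i M_{E_i}$, and its class decomposes as $M_B\otimes\nu=(\eta_{ij})\in\bigoplus_{i,j}\Ext^1(M_{F_j},M_{E_i})$. By the identification in the previous sentence, $\X_{ij}$ is non-trivial if and only if $\eta_{ij}\neq0$.

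With this dictionary, (1) becomes: for each $i_0$ the $i_0$-th row $(\eta_{i_0 j})_j$ is not entirely zero. This row is the push-out of $M_B\otimes\nu$ along the projection $p_{i_0}\colon M_B\otimes M_C\twoheadrightarrow M_{E_{i_0}}$, namely $(p_{i_0})_\ast(M_B\otimes\nu)\in\Ext^1(M_B\otimes M_D,M_{E_{i_0}})$. Using the adjunction $M_B\otimes(-)\dashv M_B^{*}\otimes(-)$ (duals exist because $H$ is finite–dimensional, the dual of $M_B$ being cut out by $S(\B)$), and the fact that both functors are exact, I rewrite this class as $(\psi_{i_0})_\ast\nu\in\Ext^1\!\big(M_D,\,M_B^{*}\otimes M_{E_{i_0}}\big)$, where $\psi_{i_0}\colon M_C\to M_B^{*}\otimes M_{E_{i_0}}$ is the mate $\big(\mathrm{id}\otimes p_{i_0}\big)\circ(\mathrm{coev}_{M_B}\otimes\mathrm{id})$ of $p_{i_0}$. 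Now $p_{i_0}\neq0$ because $M_{E_{i_0}}$ really is a summand of $M_B\otimes M_C$, so $\psi_{i_0}\neq0$; since $M_C$ is simple and $\Corep(H_0)$ is semisimple, $\psi_{i_0}$ is a split monomorphism, hence $(\psi_{i_0})_\ast$ is injective on $\Ext^1$. As $\nu\neq0$, we get $(\psi_{i_0})_\ast\nu\neq0$, i.e. some $\eta_{i_0 j}\neq0$; this is (1). Statement (2) is obtained in the dual way: pulling $M_B\otimes\nu$ back along the inclusion $M_{F_{j_0}}\hookrightarrow M_B\otimes M_D$ and using the adjunction in the first variable gives $(\chi_{j_0})^{\ast}\nu$ with $\chi_{j_0}\colon M_B^{*}\otimes M_{F_{j_0}}\to M_D$ nonzero; since $M_D$ is simple, $\chi_{j_0}$ is a split epimorphism and $(\chi_{j_0})^{\ast}$ is injective, forcing some $\eta_{ij_0}\neq0$.

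The homological core above is short; the real work—and what I expect to be the main obstacle—is making the dictionary of the second paragraph precise within the matrix formalism actually used in the paper: checking that $\B\odot'(-)$ together with the particular change–of–basis matrices $L_{\B,\C},L_{\B,\D}$ of (\ref{equation:BX}) computes exactly the class $M_B\otimes\nu$ and its components $\eta_{ij}$, and that ``an entry of $\X_{ij}$ lies outside $H_0$'' matches ``$\eta_{ij}\neq0$''. Equivalently, one can avoid $\Ext$ altogether and argue entirely with matrices: the same content is carried by the identity $\sum_k S(\B)_{ik}\B_{kj}=\delta_{ij}\unit$, which exhibits $M_C$ (resp. $M_D$) as a split sub- (resp. quotient-) comodule of $M_B^{*}\otimes(M_B\otimes M_C)$ and thereby reproduces the splittings $\psi_{i_0}$, $\chi_{j_0}$ used above. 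Either way the two assertions are genuinely dual, so it suffices to carry out (1) carefully and transpose.
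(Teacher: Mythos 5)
The paper itself contains no proof of this lemma: it is quoted verbatim from \cite[Lemma 3.2]{YLL23}, so the only comparison available is with the matrix-theoretic argument of that source, which works directly with multiplicative matrices, bicomodule decompositions, and the antipode identity $\sum_k S(b_{ik})b_{kj}=\delta_{ij}1$ --- essentially the alternative you sketch in your closing paragraph. Your categorical proof is correct and takes a genuinely different route. The dictionary you set up is sound: triviality of a primitive matrix forces all matrix coefficients of the associated two-step comodule into the cosemisimple coalgebra $H_0$, hence splitting, and conversely; and the $(i,j)$ block $\X_{ij}$ of the conjugated matrix in (\ref{equation:BX}) represents exactly the component $\eta_{ij}$ of the class $M_B\otimes\nu$ obtained by pulling back along $M_{F_j}\hookrightarrow\bigoplus_j M_{F_j}$ and pushing out along $\bigoplus_i M_{E_i}\twoheadrightarrow M_{E_i}$, so ``$\X_{ij}$ non-trivial $\Leftrightarrow\eta_{ij}\neq 0$'' does hold. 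The adjunction step (mates compatible with pushout/pullback of extension classes, both functors exact) is standard, and the split mono/epi property then gives injectivity on $\Ext^1$, which is the whole homological content. One point you should make explicit, because it is precisely where the dual Chevalley hypothesis enters: the splitting of $\psi_{i_0}$ (resp.\ $\chi_{j_0}$) needs the target (resp.\ source) $M_B^{*}\otimes M_{E_{i_0}}$ to be semisimple, i.e.\ to be an $H_0$-comodule, and this holds only because its coefficient space lies in $S(B)E_{i_0}\subseteq H_0H_0\subseteq H_0$, using that $H_0$ is a Hopf subalgebra; without that the argument collapses. As for what each approach buys: your route isolates the abstract mechanism (rigidity plus exactness makes each row/column of the block decomposition a faithful image of the original nonzero extension class) and renders the duality between (1) and (2) transparent, at the cost of having to verify the dictionary between the paper's block-matrix formalism and Yoneda extension classes; the source's computation stays entirely inside the multiplicative-matrix formalism on which the rest of the paper is built, so it needs no translation, but it obscures the simple homological reason the statement is true.
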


\begin{notation}
Let $\mathcal{M}$ denote the set of representatives of basic multiplicative matrices over $H$ for the similarity relation.
\end{notation}
Let ${}^1\mathcal{S}=\{C\in\mathcal{S}\mid \k1+C\neq \k1\wedge C\}$. For any $C\in{}^1\mathcal{S}$ with basic multiplicative matrix $\C\in\mathcal{M}$, Lemma \ref{lemma:complete} allows us to fix a complete family $\{\X_{C}^{(\gamma_C)}\}_{\mathit{\gamma_C\in\Gamma}_C}$ of non-trivial $(1, \C)$-primitive matrices.

Define the set
\begin{eqnarray}\label{def:1^P}
{^1\mathcal{P}}:=\bigcup\limits_{C\in {}^1\mathcal{S}}\{\X_{C}^{(\gamma_C)}\mid \gamma_C\in \mathit{\Gamma}_C\}.
\end{eqnarray}
For any non-trivial $(1, \C)$-primitive matrix $\Y\in{^1\mathcal{P}}$ and $\B\in\mathcal{M}$, we have
\begin{eqnarray}\label{equationBY}
\left(\begin{array}{cc}
I&0\\
0&L_{\B, \C}
 \end{array}\right)
\left(\B\odot^\prime
\left(\begin{array}{cc}
1&\Y\\
0&\C
 \end{array}\right)\right)
 \left(\begin{array}{cc}
I&0\\
0&L_{\B, \C}^{-1}
 \end{array}\right)
=\left(\begin{array}{cccccc}
    \B&  &  & {\Y_{ 1}} & \cdots & {\Y_{ u_{(\B, \C)}}}  \\
     &  &  & \E_{1} &  &   \\
    0&  &   &  & \ddots &   \\
     &  &  &  &  & \E_{u_{(\B, \C)}}
  \end{array}\right),
\end{eqnarray}
where $\E_1, \E_2, \cdots, \E_{u_{(\B, \C)}}\in \mathcal{M}$. By Lemma \ref{Lemma:CXno0}, $\Y_1, \Y_2, \cdots, \Y_{u_{(\B, \C)}}$ are non-trivial.
Denote the following sets:
\begin{eqnarray}\label{^BPY}
^{\B}\mathcal{P}_{\Y}:=\{\Y_{ i}\mid 1\leq i\leq u_{(\B, \C)}\},
\end{eqnarray}
\begin{eqnarray}\label{definition:^BP}
^{\B}\mathcal{P}:=\bigcup\limits_{\Y\in{{}^1\mathcal{P}}}{}^{\B}\mathcal{P}_{\Y},\;\;\; \mathcal{P}_{\Y}:=\bigcup\limits_{\B\in \mathcal{M}}{}^{\B}\mathcal{P}_{\Y}.
\end{eqnarray}
Note that $\bigcup\limits_{\Y\in{{}^1\mathcal{P}}}{}^{1}\mathcal{P}_{\Y}$ coincides with ${}^1\mathcal{P}$ defined in (\ref{def:1^P}).
Finally, define
\begin{eqnarray}\label{definition:P}
\mathcal{P}:=\bigcup\limits_{\B\in \mathcal{M}}{^{\B}\mathcal{P}}=\bigcup\limits_{\Y\in{{}^1\mathcal{P}}}\mathcal{P}_{\Y}.
\end{eqnarray}
\begin{lemma}\emph{(}\cite[Corollary 3.9]{YLL23}\emph{)}\label{coro:P_X}
With the notations in (\ref{definition:P}), then the union $\mathcal{P}=\bigcup\limits_{\Y\in{}^1\mathcal{P}}\mathcal{P}_{\Y}$ is disjoint.
\end{lemma}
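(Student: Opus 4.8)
The plan is to exhibit $H_1/H_0$ as a Hopf module over the coradical $H_0$ and to read off the disjointness from the resulting direct-sum decomposition. First I would note that, since $H_0$ is a Hopf subalgebra, the coradical filtration is an algebra filtration, so $H_0 H_1 \subseteq H_1$ and left multiplication makes $H_1/H_0$ a left $H_0$-module; together with the left comodule structure $\rho_L$ of (\ref{comodulestructure}) this is a left--left $H_0$-Hopf module, the compatibility $\rho_L(a\bar h)=\Delta(a)\rho_L(\bar h)$ being immediate from the fact that $\Delta$ is an algebra map. By the fundamental theorem of Hopf modules \cite{Mon93}, the multiplication map $H_0\otimes V \to H_1/H_0$, $a\otimes v \mapsto a\cdot v$, is then an isomorphism of left Hopf modules, where $V=\{\bar h : \rho_L(\bar h)=\unit\otimes \bar h\}$ is the space of left coinvariants.

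Next I would identify $V$ with the spans of the fixed complete families. By Lemma \ref{lemma:sum2} the coinvariants are exactly the left-$\k1$ homogeneous component $V={}^1(H_1/H_0)=\bigoplus_{C\in\mathcal{S}}{}^1(H_1/H_0)^C$, and by the definition of ${}^1\mathcal{S}$ only the $C\in{}^1\mathcal{S}$ contribute. For such $C$, completeness of the fixed family $\{\X_C^{(\gamma_C)}\}$ (Lemma \ref{lemma:complete}) gives ${}^1(H_1/H_0)^C=\bigoplus_{\gamma_C}\span(\overline{\X_C^{(\gamma_C)}})$, so that $V=\bigoplus_{\Y\in{}^1\mathcal{P}}\span(\overline{\Y})$ by the definition (\ref{def:1^P}) of ${}^1\mathcal{P}$. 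Since $H_0=\bigoplus_{B\in\mathcal{S}}B$ as a left $H_0$-comodule, the isomorphism above (being a comodule map) identifies the left-$B$ homogeneous part ${}^B(H_1/H_0)$ with the image of $B\otimes V$, namely $B\cdot V=\bigoplus_{\Y\in{}^1\mathcal{P}}B\cdot\span(\overline{\Y})$, and this is a direct sum because it is the image of $\bigoplus_\Y B\otimes\span(\overline{\Y})$.

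Now I would match this module-theoretic picture with the matrix construction. For fixed $\B\in\mathcal{M}$ the block form (\ref{equationBY}), coming from \cite[Proposition 2.6]{Li22a}, expresses the entries of the blocks $\Y_1,\dots,\Y_{u_{(\B,\C)}}$, after conjugation by the invertible matrix $L_{\B,\C}$, as a $\k$-basis change of the entries of $\B\odot^\prime\Y$; hence $\sum_i\span(\overline{\Y_i})=B\cdot\span(\overline{\Y})$, and these $\span(\overline{\Y_i})$ are the simple sub-bicomodules into which $B\cdot\span(\overline{\Y})$ decomposes. Assembling over all $\B\in\mathcal{M}$ and all $\Y\in{}^1\mathcal{P}$, and using $H_1/H_0=\bigoplus_{B}{}^B(H_1/H_0)$, I obtain a direct sum $H_1/H_0=\bigoplus_{\B\in\mathcal{M}}\bigoplus_{\Y\in{}^1\mathcal{P}}\bigoplus_{i}\span(\overline{\Y_i})$ of nonzero subspaces (nonzero by Lemma \ref{Lemma:CXno0}), indexed precisely by the matrices comprising $\mathcal{P}$ together with their labels $(\B,\Y,i)$. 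The disjointness is then immediate: if some matrix lay in both $\mathcal{P}_{\Y}$ and $\mathcal{P}_{\Y'}$ with $\Y\neq\Y'$, it would equal $\Y_i$ and $\Y'_j$ for two distinct labels, and the single nonzero subspace $\span(\overline{\Y_i})=\span(\overline{\Y'_j})$ would occur as two distinct summands of the above direct sum — a contradiction. Hence the $\mathcal{P}_{\Y}$ are pairwise disjoint, and $\mathcal{P}=\bigcup_{\Y\in{}^1\mathcal{P}}\mathcal{P}_{\Y}$ is a disjoint union.

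I expect the main obstacle to be the third step, namely matching the purely matrix-theoretic block decomposition (\ref{equationBY}) with the module-theoretic translate $B\cdot\span(\overline{\Y})$ and confirming that conjugation by $L_{\B,\C}$ induces no collapse among the summands $\span(\overline{\Y_i})$; this is exactly the point at which the non-triviality statement of Lemma \ref{Lemma:CXno0} and the invertibility of $L_{\B,\C}$ are indispensable. A secondary technical point is verifying that the left coinvariants of the Hopf module $H_1/H_0$ coincide with the left-$\k1$ component ${}^1(H_1/H_0)$, so that the chosen families $\{\X_C^{(\gamma_C)}\}$ genuinely exhaust $V$; here the antipode of $H_0$ enters implicitly through the inverse of the Hopf-module isomorphism.
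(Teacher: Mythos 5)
Your proof cannot be compared line-by-line with anything in this paper, because the paper does not prove Lemma \ref{coro:P_X} at all: it is imported verbatim from \cite[Corollary 3.9]{YLL23}. Judged against the toolkit of Section \ref{subsection1.2} and the method of the cited source, your argument is correct and is essentially the expected one. The Hopf-module structure on $H_1/H_0$ (multiplication plus $\rho_L$, using $H_0H_1\subseteq H_1$, which holds since $H_0$ is a Hopf subalgebra), the identification of the coinvariants with ${}^1(H_1/H_0)=\bigoplus_{\Y\in{}^1\mathcal{P}}\span(\overline{\Y})$ via Lemmas \ref{lemma:sum2} and \ref{lemma:complete}, the identification ${}^B(H_1/H_0)=B\cdot V$ through the comodule-map property of the Hopf-module isomorphism, and the equality $\sum_i\span(\overline{\Y_i})=B\cdot\span(\overline{\Y})$ (right multiplication by $L_{\B,\C}^{-1}$ in (\ref{equationBY}) does not change the span of the entries of $\B\odot^\prime\Y$) are all sound; there is no circularity, since you never invoke Lemma \ref{coro:P_X} or Lemma \ref{coro:BXcomplete}; and in effect you re-derive the stronger completeness statement recorded here as Lemma \ref{coro:BXcomplete}, which is exactly how the source organizes the material.

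One step is asserted rather than proved: that the inner sum $\sum_i\span(\overline{\Y_i})$ inside $B\cdot\span(\overline{\Y})$ is direct (your phrase ``the simple sub-bicomodules into which $B\cdot\span(\overline{\Y})$ decomposes''). This is not automatic from the bicomodule decomposition, because the simple coalgebras $E_i$ appearing in $B\cdot C$ may repeat (compare case (4) in the proof of Lemma \ref{lem:invertibleK}, where $C\cdot C=2E^{(2)}$). It is true, by a dimension count: if $\B$ is $r\times r$, $\C$ is $s\times s$ and $\E_i$ is $s_i\times s_i$ with $\sum_i s_i=rs$, then injectivity of the multiplication map gives $\dim_{\k}\bigl(B\cdot\span(\overline{\Y})\bigr)=r^2s$, while $\sum_i\dim_{\k}\span(\overline{\Y_i})=r\sum_i s_i=r^2s$, forcing directness. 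But it is also not needed for your conclusion: if $\Y_i=\Y^\prime_j$ with $\Y\neq\Y^\prime$, then the nonzero subspace $\span(\overline{\Y_i})=\span(\overline{\Y^\prime_j})$ (nonzero by Lemma \ref{Lemma:CXno0}) lies in both $B\cdot\span(\overline{\Y})$ and $B^\prime\cdot\span(\overline{\Y^\prime})$, which are two distinct summands of your double direct sum $\bigoplus_{\B,\Y}B\cdot\span(\overline{\Y})$ --- already a contradiction. Either add the dimension count or phrase the final contradiction at the level of the double sum; with that repair the proof is complete.
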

As a consequence, we have the following lemma.
\begin{lemma}\emph{(}\cite[Theorem 3.10]{YLL23}\emph{)}\label{coro:BXcomplete}
Let $C, D\in \mathcal{S}$ with basic multiplicative matrices $\C, \D\in\mathcal{M}$ respectively. Denote
$${}^{\C}\mathcal{P}^{\D}:=\{\X\in \mathcal{P}\mid  \X \text{ is a non-trivial }(\C, \D)\text{-primitive matrix}\}.$$
Then it is a complete family of non-trivial $(\C, \D)$-primitive matrices. Moreover, we have $H_1/H_0=\bigoplus_{\X\in\mathcal{P}}\span(\overline{\X})$.
\end{lemma}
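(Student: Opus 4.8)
The final statement to prove is Lemma \ref{coro:BXcomplete}, which asserts two things: first, that for simple subcoalgebras $C, D$ with basic multiplicative matrices $\C, \D$, the set ${}^{\C}\mathcal{P}^{\D}$ of non-trivial $(\C, \D)$-primitive matrices lying in $\mathcal{P}$ forms a \emph{complete} family (in the sense of the Definition following Lemma \ref{lemma:complete}), and second, that $H_1/H_0 = \bigoplus_{\X\in\mathcal{P}}\span(\overline{\X})$. The plan is to leverage the disjointness from Lemma \ref{coro:P_X} together with the bicomodule decomposition of Lemma \ref{lemma:sum2}, reducing everything to a fixed pair $(C,D)$ and then summing.

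First I would fix $C, D\in\mathcal{S}$ and analyze ${}^{\C}\mathcal{P}^{\D}$. By the construction in (\ref{^BPY})--(\ref{definition:P}), every element of $\mathcal{P}$ arises as some $\Y_i$ appearing in the block decomposition (\ref{equationBY}) for a choice of $\B\in\mathcal{M}$ and $\Y\in{}^1\mathcal{P}$; by Lemma \ref{Lemma:CXno0} and the accompanying remarks, each such $\Y_i$ is a non-trivial $(\E_i, \F)$-primitive matrix for appropriate basic multiplicative matrices. So membership of a given $\X\in\mathcal{P}$ in ${}^{\C}\mathcal{P}^{\D}$ is exactly the condition that its source and target blocks are $\C$ and $\D$. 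The key structural input is Lemma \ref{coro:P_X}: the union $\mathcal{P}=\bigcup_{\Y}\mathcal{P}_{\Y}$ is disjoint, which guarantees that no primitive matrix is double-counted and that the spans $\span(\overline{\X})$ for distinct $\X$ meet only in zero.

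Next I would establish completeness for the fixed pair. By Lemma \ref{lemma:complete} there exists \emph{some} complete family of non-trivial $(\C,\D)$-primitive matrices realizing the decomposition ${}^C(H_1/H_0)^D=\bigoplus_\gamma\span(\overline{\X^{(\gamma)}})$. The task is to show that ${}^{\C}\mathcal{P}^{\D}$ is itself such a family, i.e.\ that the spans of its members are independent and sum to all of $({}^CH_1{}^D+H_0)/H_0$. Independence follows from the disjointness of Lemma \ref{coro:P_X} combined with the fact (from \cite[Lemma 2.4]{YLL23}, quoted in the excerpt) that each $\span(\overline{\X})$ is a \emph{simple} $C$-$D$-bicomodule of dimension $rs$. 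The surjectivity/spanning part is where the real content lies: I would argue that every simple $C$-$D$-sub-bicomodule summand of ${}^C(H_1/H_0)^D$ is accounted for by some $\X\in\mathcal{P}$, tracing back through the bosonization-style block construction (\ref{equationBY}) and invoking Lemma \ref{Lemma:CXno0} to ensure that applying multiplicative matrices $\B$ does not lose any non-trivial primitive data. A counting or dimension argument comparing $\sum_{\X\in{}^{\C}\mathcal{P}^{\D}}\dim\span(\overline{\X})$ with $\dim\,{}^C(H_1/H_0)^D$ would close this step.

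Finally, to assemble the global statement $H_1/H_0=\bigoplus_{\X\in\mathcal{P}}\span(\overline{\X})$, I would invoke Lemma \ref{lemma:sum2}, which decomposes $H_1/H_0$ as the bicomodule direct sum $\bigoplus_{C,D\in\mathcal{S}}{}^C(H_1/H_0)^D$, and then substitute the per-pair completeness just proved, noting that $\mathcal{P}=\bigsqcup_{C,D}{}^{\C}\mathcal{P}^{\D}$ by disjointness. The main obstacle I anticipate is the spanning direction of completeness for the fixed pair: one must verify that the construction in (\ref{definition:^BP})--(\ref{definition:P}), which builds $\mathcal{P}$ by ranging over all $\B\in\mathcal{M}$ and all $\Y\in{}^1\mathcal{P}$, genuinely exhausts every simple sub-bicomodule of ${}^C(H_1/H_0)^D$ rather than merely producing a subfamily. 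Controlling this likely requires tracking how the index $u_{(\B,\C)}$ and the Peter--Weyl-type decomposition of products $BC$ interact, ensuring the bookkeeping is exhaustive; the disjointness lemma then prevents overcounting, so the two halves together pin down the family exactly.
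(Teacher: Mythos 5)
Your proposal targets the right statement, but note first that the paper itself contains no proof of this lemma: it is quoted verbatim from \cite[Theorem 3.10]{YLL23}, so the only fair comparison is against the argument given there, whose ingredients are visible in Subsection \ref{subsection1.2} of the present paper. Measured against that, your attempt has two genuine gaps, both located in the per-pair completeness step that you yourself flag as ``where the real content lies''.

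First, the independence argument does not work. Lemma \ref{coro:P_X} is a statement about \emph{sets of matrices}: it says no primitive matrix lies in two different $\mathcal{P}_{\Y}$'s. It says nothing about the subspaces $\span(\overline{\X})$, so it cannot by itself guarantee that these spans ``meet only in zero''. Even if you add the simplicity of each $\span(\overline{\X})$ as a $C$-$D$-bicomodule (which does show that two \emph{distinct} such subspaces intersect trivially), pairwise trivial intersection does not imply that the family of subspaces is independent --- three distinct lines in a plane already defeat this inference. Second, and more seriously, the spanning step is deferred to ``a counting or dimension argument'' that is never produced, and nothing you invoke forces the matrices manufactured by the construction (\ref{equationBY}) to exhaust ${}^C(H_1/H_0)^D$. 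The missing idea --- and the place where the dual Chevalley property actually does its work --- is that $H_1/H_0$ is a left Hopf module over $H_0$: the action is induced by multiplication (using $H_0H_1\subseteq H_1$, which needs $H_0$ to be a subalgebra), the coaction is $\rho_L$ from (\ref{comodulestructure}), and the fundamental theorem of Hopf modules then gives an isomorphism $H_0\otimes{}^{1}(H_1/H_0)\cong H_1/H_0$, $b\otimes \overline{x}\mapsto b\cdot\overline{x}$, where ${}^{1}(H_1/H_0)=\bigoplus_{\Y\in{}^1\mathcal{P}}\span(\overline{\Y})$ by the choice of ${}^1\mathcal{P}$. Surjectivity of this map is exactly what yields spanning, since $B\cdot\span(\overline{\Y})$ is the span of the entries of $\overline{\B\odot^\prime\Y}$, which by (\ref{equationBY}) equals $\sum_{\X\in{}^{\B}\mathcal{P}_{\Y}}\span(\overline{\X})$ (the conjugating matrix $L_{\B,\C}$ is invertible over $\k$); injectivity is what yields directness --- this is the linear-independence statement \cite[Lemma 3.5]{YLL23} that the present paper reuses in the proof of Lemma \ref{lem:invertibleK} --- while Lemma \ref{coro:P_X} serves only to rule out double-counting of indices. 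Once these two points are supplied, your final assembly via Lemma \ref{lemma:sum2} (intersecting the global direct sum with ${}^C(H_1/H_0)^D$ to obtain per-pair completeness) is correct; without the Hopf-module input, neither half of the per-pair statement is established.
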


\section{Properties for link quiver}\label{section2}
Let $H$ be a coalgebra over $\k$.
Denote the set of all simple subcoalgebras of $H$ by $\mathcal{S}$. We first recall the concept of the link quiver.
\begin{definition}\emph{(}\cite[Definition 4.1]{CHZ06}\emph{)}
Let $H$ be a coalgebra over $\k$. The link quiver $\mathrm{Q}(H)$ of $H$ is defined as follows: the vertices of $\mathrm{Q}(H)$ are the elements of $\mathcal{S}$; for any simple subcoalgebra $C, D\in \mathcal{S}$ with $\dim_{\k}(C)=r^2, \dim_{\k}(D)=s^2$, there are exactly $\frac{1}{rs}\dim_{\k}((C\wedge D)/(C+D))$ arrows from $D$ to $C$.
\end{definition}
The following lemma reduces the problem of determining the number of arrows from vertex $D$ to vertex $C$ in the link quiver of $H$ to calculating the cardinal number of a complete family of non-trivial $(\C, \D)$-primitive matrices.
\begin{lemma}\emph{(}\cite[Corollary 2.18]{YLL23}\emph{)}\label{coro:complete number}
Let $C, D\in\mathcal{S}$ with basic multiplicative matrices $\C_{r\times r}$ and $\D_{s\times s}$, respectively.
If $\{\X^{(\gamma)}\}_{\gamma\in\mathit{\Gamma}}$ is a complete family of non-trivial $(\C, \D)$-primitive matrices, then the cardinal number
\begin{eqnarray}\label{arrownumber=primitive}
\mid \mathit{\Gamma}\mid=\frac{1}{rs}\dim_{\k}\left((C\wedge D)/(C+D)\right).
\end{eqnarray}
\end{lemma}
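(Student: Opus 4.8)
The plan is to compute $\dim_{\k}{}^C(H_1/H_0)^D$ in two ways. First, since $\{\X^{(\gamma)}\}_{\gamma\in\mathit{\Gamma}}$ is complete, Lemma \ref{lemma:complete} gives ${}^C(H_1/H_0)^D=\bigoplus_{\gamma\in\mathit{\Gamma}}\span(\overline{\X^{(\gamma)}})$, and each summand has dimension $rs$ by \cite[Lemma 2.4]{YLL23}; hence $\dim_{\k}{}^C(H_1/H_0)^D=|\mathit{\Gamma}|\,rs$. The crux is then to identify this bicomodule component with a wedge quotient, namely to prove
$${}^C(H_1/H_0)^D=(C\wedge D+H_0)/H_0\quad\text{and}\quad(C\wedge D)\cap H_0=C+D,$$
for then $\dim_{\k}{}^C(H_1/H_0)^D=\dim_{\k}(C\wedge D)/(C+D)$ and dividing by $rs$ yields \eqref{arrownumber=primitive}.

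For the inclusion ${}^C(H_1/H_0)^D\subseteq(C\wedge D+H_0)/H_0$ I would read the wedge membership directly off the primitive relation. If $\X^{(\gamma)}=(x_{ij})$, then $\Delta(x_{ij})=\sum_k c_{ik}\otimes x_{kj}+\sum_t x_{it}\otimes d_{tj}\in C\otimes H+H\otimes D$, since $c_{ik}\in C$ and $d_{tj}\in D$; thus every entry of every $\X^{(\gamma)}$ lies in $C\wedge D$, whence $\span(\overline{\X^{(\gamma)}})\subseteq(C\wedge D+H_0)/H_0$, and summing over $\gamma$ gives the inclusion.

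For the reverse inclusion, take $h\in C\wedge D$; then $h\in H_0\wedge H_0=H_1$, so $\overline{h}$ and $\rho_L,\rho_R$ are defined, and $\Delta(h)\in H_0\otimes H_1+H_1\otimes H_0$. Because $\Delta(h)\in C\otimes H+H\otimes H_0$ (using $D\subseteq H_0$), applying $\id\otimes\pi$ annihilates the $H\otimes H_0$ part and forces $\rho_L(\overline{h})\in C\otimes(H_1/H_0)$; symmetrically $\Delta(h)\in H_0\otimes H+H\otimes D$ gives $\rho_R(\overline{h})\in(H_1/H_0)\otimes D$. By the definition of the hit actions this says ${}^C\overline{h}=\overline{h}$ and $\overline{h}{}^{D}=\overline{h}$, i.e. $\overline{h}\in{}^C(H_1/H_0)^D$; this proves $(C\wedge D+H_0)/H_0\subseteq{}^C(H_1/H_0)^D$ and hence the first displayed equality. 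To obtain the second, note $C+D\subseteq(C\wedge D)\cap H_0$ trivially, and for the converse take $h\in(C\wedge D)\cap H_0$ and a simple subcoalgebra $E\neq C$: using coradical orthonormal idempotents $\{e_E\}$ one has ${}^Eh=(e_E\otimes\id)\Delta(h)$, and since $\Delta(h)\in C\otimes H+H\otimes D$ with $e_E|_C=0$, this lies in $D$; as $h\in H_0$, the same element is the $E$-component $h_E\in E$, so $h_E\in D\cap E$, forcing $h_E=0$ whenever $E\neq C,D$ and therefore $h\in C+D$.

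The main obstacle is this middle identification, where the delicate point is the coradical-filtration bookkeeping: one must justify that $\Delta(h)\in H_0\otimes H_1+H_1\otimes H_0$ so that $\id\otimes\pi$ is legitimately applied to the right-hand leg, and that passing to $H_1/H_0$ annihilates exactly the $H\otimes H_0$ (resp. $H_0\otimes H$) summands, so that the comodule structures land in $C\otimes(H_1/H_0)$ and $(H_1/H_0)\otimes D$. Once these two equalities are secured, the two dimension computations combine to give $|\mathit{\Gamma}|\,rs=\dim_{\k}(C\wedge D)/(C+D)$, which is the assertion.
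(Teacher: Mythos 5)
This lemma is not proved in the paper at all: it is imported verbatim from \cite[Corollary 2.18]{YLL23}, so there is no in-paper argument to compare yours against; your proposal is in effect a reconstruction of the reference's proof, and it is correct. Both dimension counts are sound: completeness gives ${}^C(H_1/H_0)^D=\bigoplus_{\gamma\in\mathit{\Gamma}}\span(\overline{\X^{(\gamma)}})$ with each summand of dimension $rs$, your wedge-membership argument for the entries of each $\X^{(\gamma)}$ gives one inclusion, your idempotent argument correctly yields $(C\wedge D)\cap H_0=C+D$, and the second isomorphism theorem then turns the two identifications into the stated formula. The one step you flag but do not finish --- that $\rho_L(\overline{h})=(\id\otimes\pi)\Delta(h)$ lies in $C\otimes(H_1/H_0)$ for $h\in C\wedge D$ --- is genuinely needed but routine: pick a linear complement $W$ of $H_1$ in $H$; then
$$(C\otimes H+H\otimes D)\cap(H_1\otimes H_1)=C\otimes H_1+H_1\otimes D,$$
because $C\otimes H+H\otimes D=(C\otimes H_1+H_1\otimes D)\oplus(C\otimes W)\oplus(W\otimes D)$ and the last two summands meet $H_1\otimes H_1$ trivially. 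Since $\Delta(h)\in(C\otimes H+H\otimes D)\cap(H_1\otimes H_1)$, you may write $\Delta(h)=a+b$ with $a\in C\otimes H_1$ and $b\in H_1\otimes D\subseteq H_1\otimes H_0$; then $\id\otimes\pi$ kills $b$ and sends $a$ into $C\otimes(H_1/H_0)$, which is exactly the claim, and the symmetric argument handles $\rho_R$. With that inserted your proof is complete, and it follows the natural route suggested by the material the paper does recall from \cite{YLL23} (Lemmas \ref{lemma:sum2} and \ref{lemma:complete}): identify ${}^C(H_1/H_0)^D$ with $(C\wedge D+H_0)/H_0$ and count dimensions through the simple-bicomodule decomposition.
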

With the notations in subsection (\ref{subsection1.2}), denote
\begin{eqnarray*}
\mathcal{P}^{\D}:=\bigcup_{\C\in\mathcal{M}} {}^{\C}\mathcal{P}^{\D}.
\end{eqnarray*}
Now we can view $^{\C}{\mathcal{P}}^{\D}$ as the set of arrows from vertex $D$ to vertex $C$, view ${\mathcal{P}^{\D}}$ as the set of arrows with start vertex $D$ and view ${^{\C}\mathcal{P}}$ as the set of arrows with end vertex $C$.

\begin{lemma}\label{lemma:P^1=1^P}\emph{(}\cite[Lemma 4.6]{YLL23}\emph{)}
Let $H$ be a finite-dimensional non-cosemisimple Hopf algebra over $\k$ with the dual Chevalley property. Denote ${}^1\mathcal{S}=\{C\in\mathcal{S}\mid \k1+C\neq\k1\wedge C \}$, $\mathcal{S}^1=\{C\in\mathcal{S}\mid C+\k1\neq C\wedge \k1\}$. Then
\begin{itemize}
  \item[(1)]$\mid{^1\mathcal{P}}\mid\geq1$;
  \item[(2)]$\mid{^1\mathcal{P}}\mid=\mid\mathcal{P}^1\mid$;
  \item[(3)]$C\in{}^1\mathcal{S}$ if and only if $S(C)\in\mathcal{S}^1$.
  \end{itemize}
\end{lemma}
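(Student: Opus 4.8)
The plan is to deduce all three statements from the behaviour of the antipode $S$ of $H$, together with the arrow-counting already set up in the excerpt. First I would record the structural facts that the dual Chevalley property makes available. Since $S$ is a bijective anti-coalgebra morphism it preserves the coradical filtration, $S(H_n)=H_n$, and (as $H_0$ is a Hopf subalgebra) it restricts to a bijection of $H_0$ permuting the set $\mathcal{S}$ of simple subcoalgebras and fixing $\k1$. The one nonformal ingredient is the wedge identity $S(A\wedge B)=S(B)\wedge S(A)$ for subcoalgebras $A,B$: applying $S$ to the defining condition $\Delta(h)\in A\otimes H+H\otimes B$ and using $\Delta\circ S=(S\otimes S)\circ\mathrm{flip}\circ\Delta$ gives $\Delta(S(h))\in S(B)\otimes H+H\otimes S(A)$, i.e. $S(A\wedge B)\subseteq S(B)\wedge S(A)$, and bijectivity of $S$ upgrades this to an equality. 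Linearity also gives $S(A+B)=S(A)+S(B)$ and $S(\k1)=\k1$.

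With these in hand, (3) is immediate: $C\in{}^1\mathcal{S}$ means $\k1+C\subsetneq\k1\wedge C$, and applying the injective map $S$ gives $\k1+S(C)=S(\k1+C)\subsetneq S(\k1\wedge C)=S(C)\wedge\k1$, which is exactly $S(C)\in\mathcal{S}^1$; the converse uses $S^{-1}$, which enjoys the same properties. For (1) I would use non-cosemisimplicity: since $H_0\neq H$ the coradical filtration cannot stabilize at step $1$ (otherwise $H_1=H_0$ forces $H_n=H_0$ for all $n$ and hence $H=H_0$), so $H_1/H_0\neq0$. By Lemma \ref{coro:BXcomplete} we have $H_1/H_0=\bigoplus_{\X\in\mathcal{P}}\span(\overline{\X})$, whence $\mathcal{P}\neq\emptyset$; but $\mathcal{P}=\bigcup_{\Y\in{}^1\mathcal{P}}\mathcal{P}_{\Y}$ by (\ref{definition:P}), so an empty ${}^1\mathcal{P}$ would force $\mathcal{P}=\emptyset$. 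Hence ${}^1\mathcal{P}\neq\emptyset$, i.e. $\mid{}^1\mathcal{P}\mid\geq1$.

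For (2) I would pass to the choice-free count of arrows. By Lemma \ref{coro:complete number}, for $C\in\mathcal{S}$ with $\dim_{\k}(C)=r^2$ the number of arrows $C\to\k1$ equals $\frac1r\dim_{\k}((\k1\wedge C)/(\k1+C))$, while the number of arrows $\k1\to C$ equals $\frac1r\dim_{\k}((C\wedge\k1)/(C+\k1))$, so $\mid{}^1\mathcal{P}\mid$ and $\mid\mathcal{P}^1\mid$ are the sums of these quantities over ${}^1\mathcal{S}$ and $\mathcal{S}^1$ respectively. The antipode carries $\k1\wedge C$ onto $S(C)\wedge\k1$ and $\k1+C$ onto $S(C)+\k1$, hence induces a linear isomorphism $(\k1\wedge C)/(\k1+C)\cong(S(C)\wedge\k1)/(S(C)+\k1)$; combined with $\dim_{\k}S(C)=\dim_{\k}C$ this shows the number of arrows $C\to\k1$ equals the number of arrows $\k1\to S(C)$. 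Summing over $C\in{}^1\mathcal{S}$ and using the bijection $C\mapsto S(C)$ from ${}^1\mathcal{S}$ onto $\mathcal{S}^1$ supplied by (3) yields $\mid{}^1\mathcal{P}\mid=\mid\mathcal{P}^1\mid$.

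I expect the only genuine bookkeeping to lie in the matrix-level version of (2), should one prefer it to the dimension count: transporting a non-trivial $(1,\C)$-primitive matrix $\X$ through $S$ produces, after a transpose, a non-trivial $(S(\C)^{T},1)$-primitive matrix whose entries span $S(C)$, and one must verify both that $S(\C)^{T}$ is again a basic multiplicative matrix and that this correspondence sends a complete family to a complete family. All of this follows formally from $\Delta\circ S=(S\otimes S)\circ\mathrm{flip}\circ\Delta$ and from linear independence and non-triviality being preserved by the bijection $S$, but it is exactly the step where index conventions (the left–right swap and the transpose) are easy to get wrong; the cleaner dimension-counting route above sidesteps it, leaving the wedge identity $S(A\wedge B)=S(B)\wedge S(A)$ as the single ingredient doing the essential work.
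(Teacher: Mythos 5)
One thing to flag before anything else: this paper never proves the statement --- it is quoted from \cite[Lemma 4.6]{YLL23} and used as a black box --- so there is no in-paper proof to compare yours against, and your argument must be judged on its own merits. It is correct. The identity $S(A\wedge B)=S(B)\wedge S(A)$ follows exactly as you derive it from $\Delta\circ S=(S\otimes S)\circ\mathrm{flip}\circ\Delta$ together with bijectivity of $S$, and since $S$ permutes $\mathcal{S}$, fixes $\k1$, and preserves strict inclusions, it yields (3) at once. Your (1) is sound: non-cosemisimplicity gives $H_1/H_0\neq 0$, so $\mathcal{P}\neq\emptyset$ by Lemma \ref{coro:BXcomplete}, and ${}^1\mathcal{P}=\emptyset$ would contradict $\mathcal{P}=\bigcup_{\Y\in{}^1\mathcal{P}}\mathcal{P}_{\Y}$ from (\ref{definition:P}). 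For (2), the two cardinalities really are arrow counts: $\mid{}^1\mathcal{P}\mid$ is a disjoint union of complete families by its definition (\ref{def:1^P}), and $\mid\mathcal{P}^1\mid$ is one because Lemma \ref{coro:BXcomplete} identifies each ${}^{\C}\mathcal{P}^{1}$ as a complete family of non-trivial $(\C,1)$-primitive matrices; Lemma \ref{coro:complete number} then converts both into sums of $\frac{1}{r}\dim_{\k}$ of wedge quotients, and your $S$-induced isomorphisms $(\k1\wedge C)/(\k1+C)\cong(S(C)\wedge\k1)/(S(C)+\k1)$, combined with the bijection $C\mapsto S(C)$ of ${}^1\mathcal{S}$ onto $\mathcal{S}^1$ (surjectivity coming from applying (3) to $S^{-1}$, as you note), match the two sums term by term. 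There is also no circularity in the ingredients: the facts you import --- Lemmas \ref{coro:complete number}, \ref{coro:P_X} and \ref{coro:BXcomplete} here --- are \cite[Corollary 2.18, Corollary 3.9, Theorem 3.10]{YLL23}, all of which precede Lemma 4.6 in that paper. Your decision to avoid transporting primitive matrices through $S$ at the matrix level (the transpose and left--right bookkeeping) was the right call; the dimension count carries all of (2) on its own, with the wedge identity as the only nontrivial input.
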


Let $\mathbb{Z}_+$ be the set of nonnegative integers. Some relevant concepts are recalled as follows.
\begin{definition}\emph{(}\cite[Definitions 2.1 and 2.2]{Ost03}\emph{)}
Let $A$ be an associative ring with unit which is free as a $\mathbb{Z}$-module.
\begin{itemize}
  \item[(1)]A $\Bbb{Z}_+$-basis of $A$ is a basis $B=\{b_{i}\}_{i\in I}$ such that $b_ib_j=\sum_{t\in I}c_{ij}^tb_t$, where $c_{ij}^t\in\Bbb{Z}_+$.
  \item[(2)]A ring with a fixed $\Bbb{Z}_+$-basis $\{b_i\}_{i\in I}$ is called a unital based ring if the following conditions hold:
  \begin{itemize}
  \item[(i)]$1$ is a basis element.
  \item[(ii)]Let $\tau: A\rightarrow \Bbb{Z}$ denote the group homomorphism defined by
  $$\tau(b_i)=\left\{
\begin{aligned}
1,~~~  \text{if} ~~~ b_i=1, \\
0,~~~  \text{if} ~~~ b_i\neq1.
\end{aligned}
\right.$$
There exists an involution $i \mapsto i^*$ of $I$ such that the induced map
$$a=\sum\limits_{i\in I}a_ib_i \mapsto a^*=\sum\limits_{i\in I}a_ib_{i^*},\;\; a_i\in \Bbb{Z}$$ is an anti-involution of $A$, and
$$\tau(b_ib_j)=\left\{
\begin{aligned}
1,~~~  \text{if} ~~~ i=j^*, \\
0,~~~  \text{if} ~~~ i\neq j^*.
\end{aligned}
\right.$$
  \end{itemize}
  \item[(3)]A fusion ring is a unital based ring of finite rank.
\end{itemize}
\end{definition}
Let $\Bbb{Z}\mathcal{S}$ be the free additive abelian group generated by the elements of $\mathcal{S}$. For our purpose, we first endow $\mathbb{Z}\mathcal{S}$ with a unital based $\Bbb{Z}_+$-ring structure.
For any $B, C\in\mathcal{S}$ with basic multiplicative matrices $\B, \C \in\mathcal{M}$,
the dual Chevalley property of $H$ implies via
\cite[Proposition 2.6(2)]{Li22a} that there exists an invertible matrix $L$ over $\k$ such that
\begin{equation}\label{equationCD}
L
(\B\odot^{\prime}\C) L^{-1}=
\left(\begin{array}{cccc}
      \E_1 & 0 & \cdots & 0  \\
      0 & \E_2 & \cdots & 0  \\
      \vdots & \vdots & \ddots & \vdots  \\
      0 & 0 & \cdots & \E_t
    \end{array}\right),
    \end{equation}
where each $\E_1, \E_2, \cdots, \E_t$ is a basic multiplicative matrix over $H$.
This induces a multiplication on $\Bbb{Z}\mathcal{S}$ defined for $B, C\in \mathcal{S}$ by
$$B\cdot C=\sum\limits_{i=1}^t E_i,$$
where $E_1, \cdots, E_t\in\mathcal{S}$ are the elements uniquely determined by the basic multiplicative matrices $\E_i\in\mathcal{M}$ in (\ref{equationCD}).
Let $S$ be the antipode of $H$. According to \cite[Theorem 3.3]{Lar71}, we obtain an anti-involution $C\mapsto S(C)$ of $\mathcal{S}$. With the multiplication and anti-involution defined above, we obtain the following lemma.
\begin{lemma}\emph{(}\cite[Proposition 4.3]{YLL23}\emph{)}\label{Prop:basedring}
Let $H$ be a Hopf algebra over $\k$ with the dual Chevalley property and let $\mathcal{S}$ be the set of all the simple subcoalgebras of $H$. Then $\Bbb{Z}\mathcal{S}$ is a unital based ring with $\Bbb{Z}_+$-basis $\mathcal{S}$.
\end{lemma}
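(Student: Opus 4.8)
The plan is to verify, one by one, the defining conditions of a unital based ring for the free $\Bbb{Z}$-module $\Bbb{Z}\mathcal{S}$ with basis $\mathcal{S}$ and the product introduced in (\ref{equationCD}). The whole structure in fact lives inside the coradical: since $H$ has the dual Chevalley property, $H_0$ is a cosemisimple Hopf subalgebra, and for $B,C\in\mathcal{S}$ the product subcoalgebra $BC$ (again a subcoalgebra, as $\Delta(BC)\subseteq BC\otimes BC$) satisfies $BC\subseteq H_0H_0=H_0$ and is therefore cosemisimple. Thus $\span(\B\odot'\C)=BC$ splits as a direct sum of simple subcoalgebras, and the block form (\ref{equationCD}) records $B\cdot C=\sum_{i=1}^{t}E_i$, one basis element $E_i$ per diagonal block $\E_i$. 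Conceptually, $\Bbb{Z}\mathcal{S}$ is the Grothendieck ring of the tensor category of finite-dimensional $H_0$-comodules, with $\mathcal{S}$ indexing the simple comodules, $\B\odot'\C$ the matrix corepresentation of a tensor product of the corresponding simples, and the structure constants the multiplicities occurring in that tensor product; I will use this translation whenever it shortens a verification, but each point can equally be checked in the matrix language of Section \ref{section1}.

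First I would fix the ring structure. The product is well defined, since the diagonal blocks in (\ref{equationCD}) are exactly the simple constituents of the cosemisimple coalgebra $BC$ and hence independent of all choices; extending $\cdot$ bilinearly makes $\Bbb{Z}\mathcal{S}$ a ring. Associativity $(B\cdot C)\cdot D=B\cdot(C\cdot D)$ comes directly from the associativity of the multiplication of $H$: both iterated products are matrix corepresentations of one and the same threefold tensor product of simple comodules, so $(\B\odot'\C)\odot'\D$ and $\B\odot'(\C\odot'\D)$ represent canonically isomorphic comodules, are therefore similar, and share the same multiset of simple diagonal blocks. Finally, each structure constant $c_{BC}^{E}$ is the number of diagonal blocks spanning $E$, a nonnegative integer; hence $\mathcal{S}$ is a $\Bbb{Z}_+$-basis.

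Next come the unit and the involution. Because $1$ is group-like, $\k 1$ is a one-dimensional, hence simple, subcoalgebra and lies in $\mathcal{S}$; from $1\cdot C=C=C\cdot 1$ in $H$ we get $\k 1\cdot C=C=C\cdot\k 1$, so $\k 1$ is a basis element serving as the unit, which is condition (i), and $\tau$ is the extraction of the coefficient of $\k 1$. For the involution I take $C\mapsto C^{*}:=S(C)$, the anti-involution of $\mathcal{S}$ supplied by \cite[Theorem 3.3]{Lar71} (so $S(S(C))=C$ on $\mathcal{S}$). That the induced $\Bbb{Z}$-linear map is an anti-automorphism, $(B\cdot C)^{*}=C^{*}\cdot B^{*}$, follows from $S(BC)=S(C)S(B)$: applying the antipode to the simple decomposition of $BC$ carries its constituents bijectively onto those of $S(C)S(B)$.

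The crux is the orthogonality relation $\tau(B\cdot C)=\delta_{B,\,S(C)}$, that is, that $\k 1$ occurs in $BC$ precisely when $B=S(C)$ and then with multiplicity one. I would establish it through comodule duality: the coefficient of $\k 1$ in $B\cdot C$ is the multiplicity of the trivial comodule in the relevant tensor product, namely $\dim_{\k}\Hom^{H_0}(\k,\,M_C\otimes M_B)=\dim_{\k}\Hom^{H_0}(M_C^{\vee},M_B)$, where the dual comodule $M_C^{\vee}$ is simple with coefficient coalgebra $S(C)$; by Schur's lemma this dimension equals $\delta_{B,\,S(C)}$. In the matrix formalism this reads that $\k 1$ is contained in $BC=\span(\B\odot'\C)$ exactly when $\B$ is similar to the contragredient $(S(\C))^{\mathrm t}$, i.e. when $B=S(C)$, matching the antipode. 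This duality step is where the real work lies; its direction is consistent with Lemma \ref{lemma:P^1=1^P}(3), namely $C\in{}^1\mathcal{S}$ iff $S(C)\in\mathcal{S}^1$. Assembling the four verifications shows that $\Bbb{Z}\mathcal{S}$, with basis $\mathcal{S}$, unit $\k 1$, involution $*$ and functional $\tau$, is a unital based ring.
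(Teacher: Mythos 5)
The paper never proves this lemma itself: it is imported verbatim from \cite[Proposition 4.3]{YLL23}, so there is no in-paper argument to compare yours against. Judged on its own, your proof is correct and is the natural one. You identify $\Bbb{Z}\mathcal{S}$ with the Grothendieck ring of the tensor category of finite-dimensional $H_0$-comodules, which is legitimate precisely because the dual Chevalley property makes $H_0$ a cosemisimple Hopf subalgebra, so that $BC\subseteq H_0$ and $\B\odot^\prime\C$ is (up to permutation similarity) a corepresentation matrix of $M_B\otimes M_C$. The based-ring axioms then become standard facts: associativity from associativity of $\otimes$, the unit from the trivial comodule $\k1$, the anti-involution from the duality functor together with Larson's theorem that $C\mapsto S(C)$ is an involution on $\mathcal{S}$, and the crucial orthogonality $\tau(B\cdot C)=\delta_{B,\,S(C)}$ from rigidity (left duals of finite-dimensional comodules, with ${}^\vee M_C\cong M_{S(C)}$) plus Schur's lemma over the algebraically closed field $\k$. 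This is exactly the kind of argument one expects behind the cited result, phrased categorically rather than through explicit matrix computations.

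One point you should tighten: to justify well-definedness you say the diagonal blocks of (\ref{equationCD}) \emph{are exactly the simple constituents of the cosemisimple coalgebra $BC$}, and similarly your one-line argument for $(B\cdot C)^{*}=C^{*}\cdot B^{*}$ speaks of applying $S$ to ``the simple decomposition of $BC$''. That is not quite the right invariant: distinct blocks $\E_i,\E_j$ may span the \emph{same} simple subcoalgebra, i.e.\ the product genuinely carries multiplicities (for instance $C\cdot C=2E^{(2)}$ occurs in case (4) of the proof of Lemma \ref{lem:invertibleK}), whereas the subcoalgebra decomposition of $BC$ only records the support. The correct justification --- which your Grothendieck-ring framing already supplies --- is the uniqueness (Jordan--H\"{o}lder/Krull--Schmidt) of the decomposition of the \emph{comodule} $M_B\otimes M_C$ into simples, applied both to well-definedness and to the preservation of multiplicities under the duality functor. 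With that small repair the proof is complete.
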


\begin{lemma}\label{lem:BCend,BDstart}
Suppose that $\X$ is a non-trivial $(\C, \D)$-primitive matrix. For any $B\in\mathcal{S}$,
\begin{itemize}
\item[(1)]if $B\cdot C$ contains $E$ with a nonzero coefficient, then there exists an arrow in $\mathrm{Q}(H)$ with end vertex $E$;
\item[(2)]if $B\cdot D$ contains $F$ with a nonzero coefficient, then there exists an arrow in $\mathrm{Q}(H)$ with start vertex $F$.
\end{itemize}
\end{lemma}
\begin{proof}
We only prove $(1)$; the proof of $(2)$ is similar.
By \cite[Proposition 2.6]{Li22a}, we have
\begin{eqnarray}
\B\odot^\prime
\left(\begin{array}{cc}
\C&\X\\
0&\D
 \end{array}\right)
\sim \left(\begin{array}{cccccc}
    \E_1 &  &  & \X_{11} & \cdots & \X_{1u_{(\B, \D)}}  \\
    & \ddots &  & \vdots &  & \vdots  \\
     & & \E_{u_{(\B, \C)}} & \X_{u_{(\B, \C)}1} & \cdots & \X_{u_{(\B, \C)}u_{(\B, \D)}}  \\
     &  &  & \F_1 &  &   \\
     & 0 &  &  & \ddots &   \\
     &  &  &  &  & \F_{u_{(\B, \D)}}
  \end{array}\right),
\end{eqnarray}
 where $\E_1, \cdots, \E_{u_{(\B, \C)}}, \F_1, \cdots, \F_{u_{(\B, \D)}}$ are the given basic multiplicative matrices. According to Lemma \ref{Lemma:CXno0}, for each $1\leq i\leq u_{(\B, \C)}$, there is some $1\leq j\leq u_{(\B, \D)}$ such that $\X_{ij}$ is non-trivial;
 and for each $1\leq j\leq u_{(\B, \D)}$, there is some $1\leq i\leq u_{(\B, \C)}$ such that $\X_{ij}$ is non-trivial. Without loss of generality, for any $E_i$ contained in $B\cdot C$, suppose that $\X_{i1}$ is non-trivial. Note that $$\span(\overline{\X_{i1}})\subseteq ({}^{E_i}H_1{}^{F_1}+H_0)/H_0).$$ It follows from Lemma \ref{coro:complete number} that $$\dim_{\k}((E_i\wedge F_1)/(E_i+F_1))>0.$$ This means that there exists some arrow from $F_1$ to $E_i$. Thus the proof of $(1)$ is complete.
\end{proof}

For convenience, let $\mathcal{S}=\{C_i\mid i\in I\}$ denote the set of all simple subcoalgebras of $H$. For any $C_i, C_j\in\mathcal{S}$, we write $C_i\cdot C_j=\sum\limits_{t\in I}\alpha_{i,j}^tC_t$, where $\alpha_{i,j}^t\in\Bbb{Z}_+$.
Moreover, we denote $\mathcal{M}=\{\C_j\mid i\in I\}$, such that each $\C_j\in\mathcal{M}$ is the basic multiplicative matrix of $C_j\in\mathcal{S}$.
For any $\Y\in{}^1\mathcal{P}$ and $\C_i\in\mathcal{M}$, using the notations from Subsection (\ref{subsection1.2}), define $$\mathcal{P}^{\C_i}_{\Y}:=\mathcal{P}^{\C_i}\cap \mathcal{P}_{\Y}.$$ Now we obtain the following lemmas.
  \begin{lemma}\emph{(}\cite[Corollary 4.8]{YLL23}\emph{)}\label{coro:numberinPX}
Let $H$ be a finite-dimensional non-cosemisimple Hopf algebra over $\k$ with the dual Chevalley property. Then for any non-trivial $(1, \C_j)$-primitive matrix  $\Y\in{}^1\mathcal{P}$, where $\C_j\in\mathcal{M}$, we have
\begin{itemize}
  \item[(1)]$\mid{{}^{\C_i}\mathcal{P}_{\Y}}\mid\geq1$, $\mid{\mathcal{P}^{\C_i}_{\Y}}\mid\geq1$ hold for all $\C_i\in\mathcal{M}$;
  \item[(2)]$\mid\mathcal{P}_{\Y}^1\mid=1$.
  \end{itemize}
\end{lemma}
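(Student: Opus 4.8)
The plan is to translate each of the three cardinality claims into a statement about multiplicities in the based ring $\Bbb{Z}\mathcal{S}$ of Lemma \ref{Prop:basedring}, and then to read those multiplicities off from the structural decomposition (\ref{equationBY}). The dictionary I will use is the following: the blocks $\Y_1,\dots,\Y_{u_{(\B,\C)}}$ produced in (\ref{equationBY}) are exactly the elements of ${}^{\B}\mathcal{P}_{\Y}$; each $\Y_k$ is a \emph{non-trivial} $(\B,\E_k)$-primitive matrix, where non-triviality is guaranteed by Lemma \ref{Lemma:CXno0} in the one-row form recorded immediately after (\ref{equationBY}); and $B\cdot C=\sum_{k=1}^{u_{(\B,\C)}}E_k$ in $\Bbb{Z}\mathcal{S}$, so that $\Y_k$ has end vertex $B$ and start vertex $E_k$. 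Thus ``$E$ occurs as a start vertex of an arrow in $\mathcal{P}_{\Y}$'' is synonymous with ``$E$ occurs in $B\cdot C$ for some $B\in\mathcal{S}$,'' counted with based-ring multiplicity.

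First I would settle the two lower bounds in (1). For $\mid{}^{\C_i}\mathcal{P}_{\Y}\mid\ge1$ it is enough to note that $C_i\cdot C\neq0$ in $\Bbb{Z}\mathcal{S}$, hence $u_{(\C_i,\C)}\ge1$; taking $\B=\C_i$ in (\ref{equationBY}) then yields at least the non-trivial block $\Y_1\in{}^{\C_i}\mathcal{P}_{\Y}$. For $\mid\mathcal{P}^{\C_i}_{\Y}\mid\ge1$ I must instead exhibit the \emph{fixed} coalgebra $C_i$ as a start vertex of some arrow in $\mathcal{P}_{\Y}$, i.e. find $B\in\mathcal{S}$ with $C_i\le B\cdot C$. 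Here I would invoke Frobenius reciprocity for the based ring, whose anti-involution is $C\mapsto S(C)$: the multiplicity of $C_i$ in $B\cdot C$ equals the multiplicity of $B$ in $C_i\cdot S(C)$. Since $C_i\cdot S(C)$ is a nonzero $\Bbb{Z}_+$-combination of basis elements, I may take $B$ to be any of its constituents; then $C_i\le B\cdot C$, and the associated block $\Y_k$ (with $E_k=C_i$) is a non-trivial $(\B,\C_i)$-primitive matrix lying in $\mathcal{P}^{\C_i}\cap\mathcal{P}_{\Y}=\mathcal{P}^{\C_i}_{\Y}$.

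For (2) the same dictionary reduces the claim to the total multiplicity $\sum_{B\in\mathcal{S}}[B\cdot C:\k1]$ with which the unit occurs as a start vertex. This is where the based-ring axioms of Lemma \ref{Prop:basedring} do the work directly: the functional $\tau$ extracts the coefficient of $\k1$, and by definition $\tau(B\cdot C)=1$ precisely when $B=S(C)$ and $\tau(B\cdot C)=0$ otherwise. Hence $\k1$ arises as a start vertex only in the single decomposition (\ref{equationBY}) with $\B=S(\C)$, and there with multiplicity exactly one, producing a unique non-trivial block $\Y_k$ with $E_k=\k1$. This block is the sole element of $\mathcal{P}_{\Y}^1$, so $\mid\mathcal{P}_{\Y}^1\mid=1$.

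The step demanding the most care is the exact bookkeeping behind (2): I must be certain that the number of non-trivial blocks $\Y_k$ with a prescribed start coalgebra is \emph{equal} to the based-ring multiplicity $[B\cdot C:E]$, not merely comparable to it. Over-counting is ruled out by the disjointness of the families (Lemma \ref{coro:P_X}) together with the fact that blocks coming from distinct $\B$ have distinct end vertices, while under-counting---some $\Y_k$ turning out to be trivial---is exactly what Lemma \ref{Lemma:CXno0} forbids in the present one-row situation. Once this precise correspondence is in place, (1) and (2) follow from the two elementary facts about $\Bbb{Z}\mathcal{S}$ used above: products of basis elements are nonzero, and $\tau(B\cdot C)=\delta_{B,S(C)}$.
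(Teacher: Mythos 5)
There is a preliminary point to make: the paper contains no proof of this lemma at all --- it is quoted verbatim from \cite[Corollary 4.8]{YLL23} --- so there is no internal argument to compare yours against, and it must be judged on its own terms. So judged, your proof is correct, and it runs on exactly the machinery the paper imports alongside the statement. Your dictionary is the paper's own: by definition the blocks $\Y_1,\dots,\Y_{u_{(\B,\C)}}$ in (\ref{equationBY}) are the elements of ${}^{\B}\mathcal{P}_{\Y}$, each is non-trivial (the paper records this right after (\ref{equationBY}), which is Lemma \ref{Lemma:CXno0}(2) in the one-row case $u_{(\B,1)}=1$), and the matrices $\E_k$ enumerate the summands of $B\cdot C$ with multiplicity. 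Your two based-ring inputs are genuine consequences of the axioms behind Lemma \ref{Prop:basedring} (see \cite{Ost03} or \cite{EGNO15}): first, products of basis elements are nonzero (positivity of the structure constants plus $\tau(b_jb_{j^*})=1$ give $b_i(b_jb_{j^*})=b_i+\cdots\neq 0$; alternatively $\varepsilon(C_iC)\neq0$ in $H$), whence $u_{(\C_i,\C)}\geq 1$ and $\mid{}^{\C_i}\mathcal{P}_{\Y}\mid\geq1$; second, from $c_{ij}^t=\tau\bigl(b_ib_jb_{t^*}\bigr)$ together with $\tau(xy)=\tau(yx)$ and $\tau(x^*)=\tau(x)$ one obtains both your reciprocity $[B\cdot C:C_i]=[C_i\cdot S(C):B]$, whence $\mid\mathcal{P}_{\Y}^{\C_i}\mid\geq1$, and $[B\cdot C:\k1]=\tau(B\cdot C)=\delta_{B,S(C)}$, whence exactly one block, coming from the decomposition for $S(C)$, has start vertex $\k1$.

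The only spot where your justification is looser than the claim requires is the over-counting half of (2). What is actually needed is that a block $\Y_k$ whose $E_k$ is not $\k1$ cannot simultaneously be a non-trivial $(\cdot,1)$-primitive matrix, i.e.\ that a non-trivial primitive matrix has a well-defined type; ``disjointness (Lemma \ref{coro:P_X}) plus distinct end vertices'' does not literally say this. But the fact is immediate in this framework: the entries of $\overline{\Y_k}$ are linearly independent, since they span a simple $B$-$E_k$-bicomodule, and this pins down both coalgebras of the type, so membership in $\mathcal{P}^1$ forces $E_k=\k1$ and the multiplicity count caps $\mid\mathcal{P}^1_{\Y}\mid$ at one. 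With that patched, your insistence that block counts equal based-ring multiplicities is precisely the mechanism the cited source itself uses --- it is recorded in this paper as Lemma \ref{lem:fpequation}, $\beta_{ij}=\sum_{t}\alpha_{i,j}^t$ --- so your reconstruction is faithful to the intended argument, not merely a correct alternative.
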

\begin{lemma}\emph{(}\cite[Lemmas 4.7 and 4.12]{YLL23}\emph{)}\label{lem:fpequation}
Let $H$ be a finite-dimensional non-cosemisimple Hopf algebra over $\k$ with the dual Chevalley property.
For any $\Y\in{}^{1}\mathcal{P}$, where $\Y$ is a non-trivial $(1, \C_j)$-primitive matrix and $\C_j\in\mathcal{M}$, let $\beta_{ij}$ be the cardinal number of $^{\C_i}\mathcal{P}_{\Y}$.  Then $\beta_{ij}=\sum\limits_{t\in I}\alpha_{i,j}^t\geq1$ and we have the following equation
 $$\sqrt{\dim_{\k}(C_k)}\left(\sum\limits_{i\in I} \sqrt{\dim_{\k}(C_i)}\right)=\sum\limits_{i\in I} \sqrt{\dim_{\k}(C_i)}\beta_{ik}.$$
\end{lemma}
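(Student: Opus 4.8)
The plan is to transport the whole statement into the unital based ring $\Bbb{Z}\mathcal{S}$ furnished by Lemma \ref{Prop:basedring}, and to play off two numerical invariants of a product $C_i\cdot C_j=\sum_{t\in I}\alpha_{i,j}^tC_t$ against each other: the total multiplicity $\sum_t\alpha_{i,j}^t$ and the total matrix size. Writing $d_i:=\sqrt{\dim_{\k}(C_i)}$, so that the basic multiplicative matrix $\C_i$ has size $d_i\times d_i$, I would first record the \emph{dimension character} identity
\begin{equation*}
d_id_j=\sum_{t\in I}\alpha_{i,j}^td_t\qquad(i,j\in I).
\end{equation*}
This is immediate from (\ref{equationCD}): the matrix $\C_i\odot^\prime\C_j$ has size $d_id_j$, conjugation by $L$ preserves this size, and the block-diagonal right-hand side has size $\sum_t\alpha_{i,j}^td_t$. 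Hence $d\colon\Bbb{Z}\mathcal{S}\to\Bbb{R}$, $C_i\mapsto d_i$, is a unital ring homomorphism. I also note that the anti-involution $C_i\mapsto S(C_i)=C_{i^*}$ used in Lemma \ref{Prop:basedring} preserves dimension (the antipode is bijective and linear), so $d_{i^*}=d_i$.

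Next I would identify $\beta_{ik}$ combinatorially. Taking $\B=\C_i$ and a non-trivial $(1,\C_k)$-primitive $\Y\in{}^1\mathcal{P}$, the block decomposition (\ref{equationBY}) produces exactly $u_{(\C_i,\C_k)}$ matrices $\Y_1,\dots,\Y_{u_{(\C_i,\C_k)}}$, one for each simple summand of $C_i\cdot C_k$; by Lemma \ref{Lemma:CXno0} each is non-trivial and by Lemma \ref{coro:numberinPX} their number is at least one. Since $u_{(\C_i,\C_k)}$ is precisely the number of simple subcoalgebras occurring in $C_i\cdot C_k$ counted with multiplicity, this yields $\beta_{ik}=|{}^{\C_i}\mathcal{P}_{\Y}|=u_{(\C_i,\C_k)}=\sum_{t\in I}\alpha_{i,k}^t\geq1$, which is the first assertion.

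For the displayed identity I would run a Frobenius-reciprocity computation. From Lemma \ref{Prop:basedring} and the standard properties of based rings in \cite[Section 2]{Ost03}, the bilinear form $(x,y)=\tau(xy^*)$ makes $\mathcal{S}$ an orthonormal basis and satisfies $(xy,z)=(y,x^*z)=(x,zy^*)$; reading these off on basis elements gives the reciprocity relations $\alpha_{i,k}^t=\alpha_{i^*,t}^k$ and $\alpha_{j,t}^k=\alpha_{k,t^*}^j$. Then
\begin{align*}
\sum_{i\in I}d_i\beta_{ik}
&=\sum_{i\in I}d_i\sum_{t\in I}\alpha_{i,k}^t
=\sum_{i\in I}d_i\sum_{t\in I}\alpha_{i^*,t}^k
=\sum_{j\in I}d_j\sum_{t\in I}\alpha_{j,t}^k\\
&=\sum_{t\in I}\sum_{j\in I}d_j\,\alpha_{k,t^*}^j
=\sum_{t\in I}d_k\,d_{t^*}
=d_k\sum_{t\in I}d_t,
\end{align*}
where the third equality reindexes $j=i^*$ (using $d_{i^*}=d_i$) and the fifth applies the dimension character to $C_k\cdot C_{t^*}$. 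Since $d_k\sum_t d_t=\sqrt{\dim_{\k}(C_k)}\bigl(\sum_{i\in I}\sqrt{\dim_{\k}(C_i)}\bigr)$, this is exactly the asserted equation.

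The main obstacle I anticipate is not the final algebra but pinning down the middle step rigorously: one must verify that the $u_{(\C_i,\C_k)}$ matrices produced by (\ref{equationBY}) are genuinely distinct members of $\mathcal{P}_{\Y}$, so that the cardinality $\beta_{ik}$ equals the multiplicity and not something smaller. This rests on the disjointness in Lemma \ref{coro:P_X} together with the completeness of ${}^{\C_i}\mathcal{P}^{\E}$ for each summand $\E$ of $C_i\cdot C_k$. A secondary point needing care is the exact form of the reciprocity relations, namely that the involution matching $C_i\mapsto S(C_i)$ is the one making $\tau$ a trace; once that is fixed, the index bookkeeping above closes the argument.
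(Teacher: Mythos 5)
You should note first that the paper itself contains no proof of this lemma: it is imported verbatim from \cite[Lemmas 4.7 and 4.12]{YLL23}, so the only meaningful comparison is with your argument on its own terms. Your overall architecture is sound, and the part concerning the displayed equation is complete and correct: the size count in (\ref{equationCD}) does show that $d_i:=\sqrt{\dim_{\k}(C_i)}$ defines a unital ring homomorphism $\mathbb{Z}\mathcal{S}\to\mathbb{R}$; $d_{i^*}=d_i$ holds because the antipode of a finite-dimensional Hopf algebra is bijective; both reciprocity relations are valid in any unital based ring (they follow from $\tau$ being a trace with $\tau(x^*)=\tau(x)$, and the second one is exactly the identity $\alpha_{ik}^t=\alpha_{tk^*}^i$ recorded in Lemma \ref{lemma:numberinquiver}); and the chain of equalities turning $\sum_i d_i\beta_{ik}$ into $d_k\sum_i d_i$ checks out line by line. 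Equivalently, your computation shows that $R=\sum_i d_iC_i$ satisfies $R\cdot C_k=d_kR$, and the asserted equation follows by summing coefficients.

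The one genuine gap is the step you flag yourself, namely $\beta_{ik}=\mid{}^{\C_i}\mathcal{P}_{\Y}\mid=u_{(\C_i,\C_k)}=\sum_t\alpha_{i,k}^t$. Since ${}^{\C_i}\mathcal{P}_{\Y}$ is by definition the \emph{set} $\{\Y_1,\dots,\Y_{u_{(\C_i,\C_k)}}\}$, you must prove these blocks are pairwise distinct, and the tools you name cannot do it: the disjointness of Lemma \ref{coro:P_X} only separates $\mathcal{P}_{\Y}$ from $\mathcal{P}_{\Y^\prime}$ for \emph{distinct} $\Y,\Y^\prime\in{}^1\mathcal{P}$, and says nothing about coincidences among blocks arising from one fixed $\Y$; while extracting multiplicities from completeness of ${}^{\C_i}\mathcal{P}^{\E}$ is essentially the content of Lemma \ref{lemma:numberinquiver}, which is quoted from a later result of \cite{YLL23} that rests on the present lemma, so that route would be circular. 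What actually closes the gap is the fact that the entries of $\C_i\odot^\prime\Y$ remain linearly independent modulo $H_0$ --- the same kind of result this paper invokes from \cite[Corollary 2.6 and Lemma 3.5]{YLL23} in the proof of Lemma \ref{lem:invertibleK}. Indeed, (\ref{equationBY}) exhibits the row of blocks as $(\Y_1,\dots,\Y_{u_{(\C_i,\C_k)}})=(\C_i\odot^\prime\Y)L_{\C_i,\C_k}^{-1}$ with $L_{\C_i,\C_k}$ invertible, so that independence forces the subspaces $\span(\overline{\Y_t})$ to form a direct sum; since each is nonzero (each $\Y_t$ is non-trivial by Lemma \ref{Lemma:CXno0}), the $\Y_t$ are pairwise distinct and the cardinality is exactly $u_{(\C_i,\C_k)}$. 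With that one substitution, your proof is complete.
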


\begin{lemma}\label{lemma:numberinquiver}\emph{(}\cite[Lemma 5.4]{YLL23}\emph{)}
Let $H$ be a finite-dimensional non-cosemisimple Hopf algebra over $\k$ with the dual Chevalley property. Let $\mid{}^1\mathcal{P}\mid=1$ and $C_k$ be the unique subcoalgebra contained in ${}^1\mathcal{S}$.
\begin{itemize}
\item[(1)]The number of arrows with end vertex $C_i$ in $\mathrm{Q}(H)$ is equal to $\sum\limits_{t\in I}\alpha_{ik}^t$, and the number of arrows with start vertex $C_i$ in $\mathrm{Q}(H)$ is equal to $\sum\limits_{t\in I}\alpha_{ik^*}^t$;
\item[(2)]The number of arrows from $C_t$ to $C_i$ in $\mathrm{Q}(H)$ is equal to $\alpha_{ik}^t$ and we have $\alpha_{ik}^t=\alpha_{tk^*}^i.$
\end{itemize}
\end{lemma}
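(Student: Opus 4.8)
The plan is to identify the arrows of $\mathrm{Q}(H)$ with the elements of $\mathcal{P}$, read off the end-vertex data directly from the block decomposition (\ref{equationBY}), and then obtain the start-vertex data by passing through the Frobenius reciprocity of the based ring $\Bbb{Z}\mathcal{S}$. First I would record the reduction forced by the hypothesis $\mid{}^1\mathcal{P}\mid=1$: since ${}^1\mathcal{S}=\{C_k\}$ and ${}^1\mathcal{P}=\bigcup_{C\in{}^1\mathcal{S}}\{\X_{C}^{(\gamma_C)}\}$, the set ${}^1\mathcal{P}$ consists of a single non-trivial $(1,\C_k)$-primitive matrix $\Y$, so by (\ref{definition:P}) we have $\mathcal{P}=\mathcal{P}_{\Y}$. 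By Lemma \ref{coro:BXcomplete} the family ${}^{\C_i}\mathcal{P}^{\C_t}$ is a complete family of non-trivial $(\C_i,\C_t)$-primitive matrices, to be regarded as the set of arrows from $C_t$ to $C_i$; hence ${}^{\C_i}\mathcal{P}$ is the set of arrows with end vertex $C_i$ and $\mathcal{P}^{\C_i}$ the set of arrows with start vertex $C_i$.

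Next I would prove the first equality in (2). Fixing $C_i$ and applying the construction (\ref{equationBY}) with $\B=\C_i$ to $\Y$ produces ${}^{\C_i}\mathcal{P}_{\Y}=\{\Y_{1},\dots,\Y_{u_{(\C_i,\C_k)}}\}$, where each $\Y_{j}$ is a non-trivial $(\C_i,\E_j)$-primitive matrix (Lemma \ref{Lemma:CXno0} guarantees non-triviality) and $\E_1,\dots,\E_{u_{(\C_i,\C_k)}}$ are exactly the basic multiplicative matrices occurring in $\C_i\odot^\prime\C_k$, i.e. the constituents of $C_i\cdot C_k=\sum_{t\in I}\alpha_{ik}^t C_t$ counted with multiplicity. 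Since $\mathcal{P}=\mathcal{P}_{\Y}$, these $\Y_{j}$ are precisely the arrows with end vertex $C_i$, and $\Y_{j}$ being $(\C_i,\E_j)$-primitive it is an arrow from $E_j=\span(\E_j)$ to $C_i$. Thus the number of arrows from $C_t$ to $C_i$ equals the multiplicity of $C_t$ among the $E_j$, namely $\alpha_{ik}^t$, which is the first claim of (2); summing over $t$ gives that the number of arrows with end vertex $C_i$ is $\sum_{t\in I}\alpha_{ik}^t=\beta_{ik}$, in agreement with Lemma \ref{lem:fpequation}, which is the first claim of (1).

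Finally I would establish the duality $\alpha_{ik}^t=\alpha_{tk^*}^i$ and deduce the start-vertex count. By Lemma \ref{Prop:basedring}, $\Bbb{Z}\mathcal{S}$ is a unital based ring, so $\tau(C_aC_b)=\delta_{a,b^*}$ and $\alpha_{a,b}^c=\tau(C_aC_bC_{c^*})$; combining this with the invariance of $\tau$ under the anti-involution $C\mapsto S(C)$ (note $\tau(x^*)=\tau(x)$ as the unit is self-dual) yields the Frobenius reciprocity $\alpha_{a,b}^c=\alpha_{c,b^*}^a$. Taking $a=i,\ b=k,\ c=t$ gives $\alpha_{ik}^t=\alpha_{tk^*}^i$, the second claim of (2). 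For the start vertices, the first claim of (2) (with $i,t$ interchanged) shows that the number of arrows from $C_i$ to $C_t$ is $\alpha_{tk}^i$, so the number of arrows with start vertex $C_i$ is $\sum_{t\in I}\alpha_{tk}^i$; applying the reciprocity with $a=t,\ b=k,\ c=i$ rewrites this as $\sum_{t\in I}\alpha_{ik^*}^t$, completing (1). The step I expect to be the main obstacle is precisely this start-vertex count: the construction (\ref{equationBY}) is intrinsically end-vertex oriented, so the start-vertex multiplicities cannot be read off directly and must be routed through the based ring identity $\alpha_{ik}^t=\alpha_{tk^*}^i$ (equivalently, through the antipode symmetry of $\mathrm{Q}(H)$). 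Verifying this reciprocity carefully---that $\tau$ behaves as a trace invariant under $*$ and that the starred indices faithfully track the antipode---is where the genuine content of the argument lies.
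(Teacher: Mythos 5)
Your proposal is correct and is essentially the intended argument: the paper states this lemma only as a citation of \cite[Lemma 5.4]{YLL23}, and the proof is exactly the one you reconstruct from the imported machinery — identifying arrows with the sets ${}^{\C_i}\mathcal{P}^{\C_t}$ via Lemmas \ref{coro:complete number} and \ref{coro:BXcomplete}, reading the end-vertex multiplicities $\alpha_{ik}^t$ off the block decomposition (\ref{equationBY}) (consistently with the count $\mid{}^{\C_i}\mathcal{P}_{\Y}\mid=\sum_{t\in I}\alpha_{ik}^t$ of Lemma \ref{lem:fpequation}), and transferring to start vertices through the based-ring structure of Lemma \ref{Prop:basedring}. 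Your verification of the reciprocity $\alpha_{ik}^t=\alpha_{tk^*}^i$ via $\alpha_{ab}^c=\tau(C_aC_bC_{c^*})$ together with $\tau(x^*)=\tau(x)$ is sound, so the start-vertex count $\sum_{t\in I}\alpha_{ik^*}^t$ follows as you claim and there is no gap.
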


Let $\mathrm{Q}(H)$ be the link quiver of $H$.
For each arrow $\X:C\rightarrow D$ in $\mathrm{Q}(H)$, let $\X^{-1}:D\rightarrow C$ denote the formal reverse. Recall that a \textit{walk} from $C$ to $D$ is a nonempty sequence of arrows $\X_1, \X_2, \cdots, \X_m$ such that there exist $\lambda_i\in\{-1, 1\}$ for which $\X_1^{\lambda_1}\X_2^{\lambda_2}\cdots \X_m^{\lambda_m}$ is a path from $C$ to $D$.
For each $C_i\in\mathcal{S} $, $\lambda_i\in\{-1, 1\}$, define
$$C_i^{\lambda_i}=\left\{
\begin{aligned}
C_i\quad,~~~  &\text{if}& ~~~ \lambda_i=1; \\
S(C_i),~~~  &\text{if}&~~~ \lambda_i=-1.
\end{aligned}
\right.
$$
Recall from \cite[section 3]{CR02} that a Hopf quiver $\mathcal{Q}(G, \chi)$ is connected if and only if the union $\cup_{\chi_\mathcal{C}\neq0}\mathcal{C}$ generates $G$. The following proposition generalizes this result.

\begin{proposition}\label{Prop:connected}
Let $H$ be a finite-dimensional non-cosemisimple Hopf algebra over $\k$ with the dual Chevalley property.
The link quiver $\mathcal{Q}(H)$ of $H$ is connected if and only if for any $D\in \mathcal{S}$, there exist $C_1, \cdots, C_n\in {}^1\mathcal{S}$ such that $C_1^{\lambda_1}\cdot C_2^{\lambda_2}\cdots C_n^{\lambda_n}$ contains $D$ with a nonzero coefficient, where $\{\lambda_i\mid1\leq i\leq n\}\subseteq\{-1, 1\}$.
\end{proposition}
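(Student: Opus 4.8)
The plan is to translate connectivity of $\mathrm{Q}(H)$ into a reachability statement in the based ring $\mathbb{Z}\mathcal{S}$ of Lemma \ref{Prop:basedring}. Since $\k1 \in \mathcal{S}$ is a vertex, $\mathrm{Q}(H)$ is connected if and only if every vertex $D$ is joined to $\k1$ by a walk. The key input I would first isolate is an exact dictionary between arrows and multiplication by elements of ${}^1\mathcal{S}$: \emph{for $B, E \in \mathcal{S}$ there is an arrow from $E$ to $B$ in $\mathrm{Q}(H)$ if and only if $E$ occurs with nonzero coefficient in $B\cdot C$ for some $C \in {}^1\mathcal{S}$.} This is extracted from Subsection \ref{subsection1.2}: for a non-trivial $(1,\C)$-primitive matrix $\Y \in {}^1\mathcal{P}$ and any $\B \in \mathcal{M}$, the block decomposition (\ref{equationBY}) attaches to each simple subcoalgebra $E_i$ occurring in $B\cdot C$ a non-trivial $(\B,\E_i)$-primitive matrix $\Y_i$, that is, an arrow from $E_i$ to $B$; conversely, by the disjointness and completeness of $\mathcal{P} = \bigsqcup_{\Y}\mathcal{P}_{\Y}$ (Lemmas \ref{coro:P_X} and \ref{coro:BXcomplete}), every arrow into $B$ occurs as such a $\Y_i$. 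This sharpens Lemma \ref{lem:BCend,BDstart} and is consistent with the counts in Lemma \ref{lemma:numberinquiver}. I will combine it with Frobenius reciprocity in $\mathbb{Z}\mathcal{S}$ — the coefficient of $D$ in $A\cdot B$ equals the coefficient of $A$ in $D\cdot S(B)$, which follows from Lemma \ref{Prop:basedring} since the functional $\tau$ is a trace with $\tau(x^*) = \tau(x)$. Together these say: \emph{$B$ and $E$ are joined by an arrow (in either direction) if and only if $E$ occurs in $B\cdot C^{\lambda}$ for some $C \in {}^1\mathcal{S}$ and $\lambda \in \{-1,1\}$.}

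For the implication ``the product condition $\Rightarrow$ connected'', suppose $D$ occurs with nonzero coefficient in $P_n := C_1^{\lambda_1}\cdots C_n^{\lambda_n}$ with every $C_i \in {}^1\mathcal{S}$. Writing $P_k := C_1^{\lambda_1}\cdots C_k^{\lambda_k}$ and $P_0 := \k1$, I would peel factors off from the right: as all structure constants are nonnegative and $D$ occurs in $P_n = P_{n-1}\cdot C_n^{\lambda_n}$, some $X_{n-1}$ occurring in $P_{n-1}$ has $D$ occurring in $X_{n-1}\cdot C_n^{\lambda_n}$; iterating produces $\k1 = X_0, X_1, \dots, X_n = D$ with $X_k$ occurring in $X_{k-1}\cdot C_k^{\lambda_k}$ for each $k$. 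By the combined dictionary above, each consecutive pair $X_{k-1}, X_k$ is joined by an arrow, so $X_0, \dots, X_n$ is a walk from $\k1$ to $D$ and $D$ lies in the connected component of $\k1$.

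For the converse, assume $\mathrm{Q}(H)$ is connected and fix $D$; pick a walk $\k1 = X_0, X_1, \dots, X_n = D$. I would turn each joining arrow into one based-ring factor from ${}^1\mathcal{S}$: if the arrow is $X_k \to X_{k-1}$ then $X_k$ occurs in $X_{k-1}\cdot C_k$ for some $C_k \in {}^1\mathcal{S}$ and I set $\lambda_k = 1$; if it is $X_{k-1} \to X_k$ then $X_{k-1}$ occurs in $X_k\cdot C_k$, whence by Frobenius reciprocity $X_k$ occurs in $X_{k-1}\cdot S(C_k)$ and I set $\lambda_k = -1$. In both cases $X_k$ occurs in $X_{k-1}\cdot C_k^{\lambda_k}$. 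Since $X_0 = \k1$, a short induction using nonnegativity of structure constants shows $X_k$ occurs in $C_1^{\lambda_1}\cdots C_k^{\lambda_k}$; taking $k = n$ gives the required product containing $D$.

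The principal obstacle is the first paragraph — pinning down the arrow-multiplication dictionary in its sharp ``if and only if'' form and, above all, verifying that the matrices $\Y_i$ arising from (\ref{equationBY}) account for \emph{every} arrow of $\mathrm{Q}(H)$. Lemma \ref{lem:BCend,BDstart} only yields the existence of some arrow with a prescribed endpoint, so the completeness and disjointness of the family $\mathcal{P} = \bigsqcup_{\Y}\mathcal{P}_{\Y}$ (Lemmas \ref{coro:P_X}, \ref{coro:BXcomplete}) must be used to identify the remaining endpoint and to exclude stray arrows. Once this dictionary and the based-ring Frobenius reciprocity are in hand, both implications reduce to the routine inductions along walks and products indicated above.
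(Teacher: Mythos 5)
Your proposal is correct, and structurally it follows the paper's proof: both directions are inductions --- peeling factors off the product using nonnegativity of the structure constants of $\mathbb{Z}\mathcal{S}$, and converting a walk step by step into a product --- powered by the correspondence between arrows and occurrences in products $B\cdot C$ with $C\in{}^1\mathcal{S}$. Your sharpened ``dictionary'' does hold: its ``if'' half is Lemma \ref{lem:BCend,BDstart} strengthened via Lemma \ref{Lemma:CXno0} and the block decomposition (\ref{equationBY}); its ``only if'' half is exactly what the paper extracts in its own backward direction from the definition of $\mathcal{P}$ together with Lemmas \ref{coro:P_X} and \ref{coro:BXcomplete}; and the type-uniqueness needed to ``exclude stray arrows'' follows from the directness of the decomposition in Lemma \ref{lemma:sum2}, since a non-trivial primitive matrix spans a nonzero subspace lying in a single bicomodule component of $H_1/H_0$. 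Where you genuinely diverge is the treatment of reversed arrows. The paper applies the antipode to primitive matrices, obtaining a non-trivial primitive matrix for the pair $(S(E_i),S(E_{i+1}))$ via conjugations $K_1S(\E_i)K_1^{-1}$ and re-invoking completeness there; as written, this produces the new factor on the left (``$C_0^{-1}E_i$ contains $E_{i+1}$''), which does not literally match the right-multiplied form $C_1^{\lambda_1}\cdots C_{i+1}^{\lambda_{i+1}}$ demanded by the statement. You instead stay inside the based ring and use Frobenius reciprocity --- the coefficient of $D$ in $A\cdot B$ equals that of $A$ in $D\cdot S(B)$ --- which, as you say, follows from the axioms in Lemma \ref{Prop:basedring} (it is the identity recorded in Lemma \ref{lemma:numberinquiver}(2)), and which keeps every new factor on the right, exactly as required. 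So your mechanism is cleaner, and more faithful to the statement, at precisely the step where the paper's write-up is least careful; everywhere else the two arguments coincide.
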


\begin{proof}
Assume that for some $D \in \mathcal{S}$ there exist $C_1, \dots, C_s \in {}^{1}\mathcal{S}$ such that $C_1^{\lambda_1} \cdots C_s^{\lambda_s}$ contains $D$ with a nonzero coefficient. We construct a walk from $\k 1$ to $D$.
If $\lambda_1=1$ or $\lambda_1=-1$, we can find a walk from $\k1$ to $C_1^{\lambda_1}$.
When $\lambda_2=1$, there exists a non-trivial $(1, \C_2)$-primitive matrix $\X_2$. By Lemma \ref{lem:BCend,BDstart}, for any summand $E_2$ contained in $C_1^{\lambda_1}\cdot C_2$ with a nonzero coefficient, there exists an arrow from $E_2$ to $C_1^{\lambda_1}$. When $\lambda_2=-1$, there exists a non-trivial $(K_1S(\C_2)K_1^{-1}, 1)$-primitive matrix $\Y_2$, where $K_1$ is some invertible matrix over $\k$ such that $K_1S(\C_2)K_1^{-1}\in\mathcal{M}.$ By Lemma \ref{lem:BCend,BDstart}, for any summand $E_2$ contained in $C_1^{\lambda_1}\cdot S(C_2)$ with a nonzero coefficient, there exists some arrow from $C_1^{\lambda_1}$ to $E_2$. Thus for any summand $E_2$ contained in $C_1^{\lambda_1}\cdot C_2^{\lambda_2}$ with a nonzero coefficient, we obtain a walk from $\k1$ to $E_2$.
Proceeding by induction, we finally obtain a walk from $\k 1$ to $D$. Hence $\mathrm{Q}(H)$ is connected.

Conversely, suppose that $\mathrm{Q}(H)$ is connected. Then for any $D \in \mathcal{S}$, there exists a walk from $\k 1$ to $D$ passing through the vertices $E_0, E_1, \cdots, E_{n}$, where $E_0=\k1, E_{n}=D.$ We claim that for each $E_i$, $i\geq1$, there exists a family of $\{C_j\}_{1\leq j\leq i}$ such that $C_1^{\lambda_1}C_2^{\lambda_2}\cdots C_{i}^{\lambda_{i}}$ contains $E_i$ with a nonzero coefficient, where $C_1, \cdots, C_{i}\in {}^1\mathcal{S}$, $\lambda_1, \cdots, \lambda_{i}\in\{1, -1\}$. We prove this by induction.
For $i=1$, there exists an arrow either from $E_1$ to $\k1$ or from $\k1$ to $E_1$. If there exists an arrow from $E_1$ to $\k1$, the claim is evident.
If there exists an arrow from $\k1$ to $E_1$, then by Lemma \ref{lemma:P^1=1^P}, we have $S(E_1)\in {}^1\mathcal{S}$. Let $C_1=S(E_1);$ the claim follows.
Suppose that the claim holds for $E_{i}$, which means that there exists a family of $\{C_j\}_{1\leq j\leq i}$ such that $C_1^{\lambda_1}C_2^{\lambda_2}\cdots C_{i}^{\lambda_{i}}$ contains $E_i$ with a nonzero coefficient.
Consider $E_{i+1}$. We know that there must be an arrow from $E_i$ to $E_{i+1}$ or from $E_{i+1}$ to $E_i$. If there exists an arrow from $E_{i+1}$ to $E_i$, it follows from Lemma \ref{coro:BXcomplete} that there exists some non-trivial $(\E_i, \E_{i+1})$-primitive matrix $\X_i\in\mathcal{P}$. By the definition of $\mathcal{P}$, we know that there exists a non-trivial $(1, \F)$-primitive matrix $\Y\in{}^1\mathcal{P}$ such that $\X_i\in{}^{\E_i}\mathcal{P}_{\Y}$, where $\F\in\mathcal{M}$. Let $C_{i+1}=F,$ it follows that $E_i \cdot C_{i+1}$ contains $E_{i+1}$ with a nonzero coefficient.
If there exists an arrow from $E_i$ to $E_{i+1}$, we can find some non-trivial $(\E_i, \E_{i+1})$-primitive matrix $\X_i$. It is straightforward to show that $S(\X_i)$ is a non-trivial $(S(\E_{i+1}), S(\E_i))$-primitive matrix. This means that $$({}^{S(E_{i+1})}H_1{}^{S(E_{i})}+H_0)/H_0\neq0.$$ Let $K_1S(\E_i)K_1^{-1}, K_2S(\E_{i+1})K_2^{-1}\in\mathcal{M}$ be the basic multiplicative matrices of $S(E_i), S(E_{i+1})$, respectively, where $K_1, K_2$ are invertible matrices over $\k$. From Lemma \ref{coro:BXcomplete}, there exists a non-trivial $(K_1S(\E_i)K_1^{-1}, K_2S(\E_{i+1})K_2^{-1})$-primitive matrix $\X_i^\prime\in\mathcal{P}$.
By the definition of $\mathcal{P}$, we know that there exists a non-trivial $(1, \F)$-primitive matrix $\Y\in{}^1\mathcal{P}$ such that $\X_i^\prime\in{}^{K_1S(\E_i)K_1^{-1}}\mathcal{P}_{\Y}$. This means that $S(E_i) \cdot F$ contains $S(E_{i+1})$ with a nonzero coefficient. Let $C_0=S(F),$ Lemma \ref{Prop:basedring} yields that $C_0^{-1}E_i$ contains $E_{i+1}$ with a nonzero coefficient.
The proof is completed.
\end{proof}

\section{Corepresentation type of Hopf algebras with the dual Chevalley property}\label{section3}
Recall that a finite-dimensional algebra $A$ is said to be of \textit{finite representation type} provided there are finitely many non-isomorphic indecomposable $A$-modules.
We say that $A$ is of \textit{tame representation type} or $A$ is a \textit{tame} algebra if $A$ is not of finite representation type, whereas for any dimension $d>0$, there are finitely many $A$-$\k[T]$-bimodules $M_i$ which are free of finite rank as right $\k[T]$-modules such that all but finite number of indecomposable $A$-modules of dimension $d$ are isomorphic to
$M_i\otimes_{\k[T]}\k[T]/(T-\lambda)$ for $\lambda\in\k.$ $A$ is of \textit{wild representation type} or $A$ is a \textit{wild} algebra if there exists a finitely generated $A$-$\k[T]$-bimodule $B$ which is free as a right $\k(X, Y)$-module such that the functor $B\otimes_{\k(X, Y)}-$ from the category of finitely generated $\k(X, Y)$-modules to the category of finitely generated $A$-modules, preserves indecomposability and reflects isomorphisms.
A finite-dimensional coalgebra $C$ is said to be of \textit{finite corepresentation type}, if the dual algebra $C^*$ is of finite representation type. $C$ is defined to be of \textit{tame corepresentation type},
if $C^*$ is a tame algebra. We say that $C$ is of \textit{wild corepresentation type}, if the dual algebra $C^*$ is a wild algebra. See \cite{Erd90,SA07}.

Let $A$ (resp. $C$) be an algebra (resp. coalgebra) over $\k$ and $\{M_i\}_{ i\in I}$ be the complete set of isoclasses of simple left $A$-modules (resp. right $C$-comodules). The \textit{Ext quiver} $\Gamma(A)$ (resp. $\Gamma(C)$) of $A$ (resp. $C$) is an oriented graph with vertices indexed by $I$, and there are $\dim_{\k}\Ext^1(M_i, M_j)$ arrows from $i$ to $j$ for any $i, j\in I$.
To avoid confusion, for any Hopf algebra $H$ over $\k$, we denote the algebra's version of Ext quiver of $H$ by $\Gamma(H)^{\mathrm{a}}$ and denote the coalgebra's version of Ext quiver of $H$ by $\Gamma(H)^\mathrm{c}$.

Let us recall the definition of separated quiver.
\begin{definition}
Let $A$ be a finite-dimensional algebra over $\k$ and $\Gamma(A)=(\Gamma_0, \Gamma_1)$ be its Ext quiver, where $\Gamma_0=\{1, 2, \cdots, n\}$. The separated quiver $\Gamma(A)_{s}$ of $A$ has $2n$ vertices $\{1, 2, \cdots, n, 1^\prime, 2^\prime, \cdots, n^\prime\}$ and an arrow $i \rightarrow j^\prime$ for every arrow $i \rightarrow j$ of $\Gamma(A)$.
\end{definition}

Now we can characterize the link quiver of a finite-dimensional non-cosemisimple Hopf algebra $H$ over $\k$ with the dual Chevalley property when it is of finite or tame corepresentation type.

\begin{theorem}\label{thm:corepandquiver}
Let $\k$ be an algebraically closed field of characteristic 0 and let $H$ be a finite-dimensional Hopf algebra over $\k$ with the dual Chevalley property. Denote ${}^1\mathcal{S}=\{C\in\mathcal{S}\mid \k1+C\neq \k1\wedge C\}$.
\begin{itemize}
  \item[(1)]$H$ is of finite corepresentation type if and only if $\mid{}^1\mathcal{P}\mid=1$ and ${}^1\mathcal{S}=\{\k g\}$ for some group-like element $g\in G(H)$.
  \item[(2)]If $H$ is of tame corepresentation type, then one of the following two cases occurs:
  \begin{itemize}
  \item[(i)]$\mid{}^1\mathcal{P}\mid=2$ and for any $C\in {}^1\mathcal{S}$, $\dim_{\k}(C)=1$;
  \item[(ii)]$\mid{}^1\mathcal{P}\mid=1$ and ${}^1\mathcal{S}=\{C\}$ for some $C\in\mathcal{S}$ with $\dim_{\k}(C)=4$.
  \end{itemize}
  \item[(3)]If one of the following holds, then $H$ is of wild corepresentation type.
  \begin{itemize}
  \item[(i)]$\mid{}^1\mathcal{P}\mid\geq3$;
  \item[(ii)]$\mid{}^1\mathcal{P}\mid=2$ and there exists some $C\in{}^1\mathcal{S}$ with $\dim_{\k}(C)\geq 4$;
  \item[(iii)]$\mid{}^1\mathcal{P}\mid=1$ and ${}^1\mathcal{S}=\{C\}$ for some $C\in\mathcal{S}$ with $\dim_{\k}(C)\geq 9$.
  \end{itemize}
  \end{itemize}
\end{theorem}
\begin{proof}
Part (1) follows directly from \cite[Theorem 5.6]{YLL23}. Clearly, (2) is the negation of (3), so it suffices to prove (3).

Note that the coalgebra's version of Ext quiver $\Gamma(H)^{\mathrm{c}}$ of $H$ is the same as the algebra's version of Ext quiver $\Gamma(H^*)^{\mathrm{a}}$ of $H^*$. According to \cite[Theorem 2.1 and Corollary 4.4]{CHZ06}, the link quiver $\mathrm{Q}(H)$ of $H$ coincides with the algebra's version of Ext quiver $\Gamma(H^*)^{\mathrm{a}}$ of $H^*$. Note that $H^*$ is Morita equivalent to a basic algebra $\mathcal{B}(H^*)$.
Let $J$ be the ideal generated by all the arrows in $\mathrm{Q}(H)$. By Gabriel's theorem, there exists an admissible ideal $I$ such that $$\k\mathrm{Q}(H)/I\cong \mathcal{B}(H^*),$$ where $J^t\subseteq I \subseteq J^2$ for some integer $t\geq2$. Thus there exists an algebra epimorphism $$f:\mathcal{B}(H^*)\rightarrow \k\mathrm{Q}(H)/J^2.$$ It is enough to show that $\k\mathrm{Q}(H)/J^2$ is of wild representation type. Since the Jacobson radical of $\k\mathcal{Q}(H)/J^2$ is $J/J^2$, we know that $\k\mathrm{Q}(H)/J^2$ is an artinian algebra with radical square zero.
Now assume on the contrary that $\k\mathrm{Q}(H)/J^2$ is of tame representation type. It follows from the proof of \cite[X.2 Theorem 2.6]{ARS95} that the separated quiver of $\k\mathrm{Q}(H)/J^2$ coincides with the quiver of the hereditary algebra
$$\sum=\left(\begin{array}{cc}
    (\k\mathcal{Q}(H)/J^2)/(J/J^2)   & 0 \\
    J/J^2  & (\k\mathcal{Q}(H)/J^2)/(J/J^2)
  \end{array}\right).$$
Note that $\k\mathrm{Q}(H)/J^2$ and $\sum$ are stably equivalent, it follows that $\k\mathrm{Q}(H)/J^2$ is of tame representation type if and only if $\sum$ is of tame representation type.
This means that $\mathrm{Q}(H)_s$ of $\k\mathrm{Q}(H)/J^2$ is a finite disjoint union of Euclidean diagrams.\\
\begin{itemize}
  \item[(i)]If $\mid{}^1\mathcal{P}\mid\geq3$, we deal with this situation through classified discussion.
   \begin{itemize}
  \item[(a)]Suppose that there exists some $C\in{}^1\mathcal{S}$ such that $\mid{}^1\mathcal{P}{}^{\C}\mid\geq3$. Then the separated quiver $\mathrm{Q}(H)_s$ must contain
$$
\begin{tikzpicture}
\filldraw [black] (1,0) circle (2pt) node[anchor=west]{$\k 1^\prime$};
\filldraw [black] (-1,0) circle (2pt)node[anchor=east]{$C$};
\draw[thick, ->] (-0.9,0.2).. controls (-0.1,0.2) and (0.1,0.2) .. node[anchor=south]{}(0.9,0.2) ;
\draw[thick, ->] (-0.9,0).. controls (-0.1,0) and (0.1,0) .. node[anchor=south]{}(0.9,0) ;
\draw[thick, ->] (-0.9,-0.2).. controls (-0.1,-0.2) and (0.1,-0.2) .. node[anchor=south]{}(0.9,-0.2);
\end{tikzpicture}
$$
as a sub-quiver. The underlying graph of this sub-quiver is not a Euclidean diagram. It turns out that $H$ is of wild corepresentation type.
  \item[(b)]Suppose that there exist some $C_1, C_2\in{}^1\mathcal{S}$ such that $\mid{}^1\mathcal{P}{}^{\C_1}\mid\geq2$ and $\mid{}^1\mathcal{P}{}^{\C_2}\mid\geq1$.
      Then the separated quiver $\mathrm{Q}(H)_s$ must contain
$$
\begin{tikzpicture}
\filldraw [black] (1,0) circle (2pt) node[anchor=west]{$\k 1^\prime$};
\filldraw [black] (-1,0) circle (2pt)node[anchor=east]{$C_1$};
\filldraw [black] (1,-2) circle (2pt)node[anchor=east]{$C_2$};
\draw[thick, ->] (-0.9,0.1).. controls (-0.1,0.1) and (0.1,0.1) .. node[anchor=south]{}(0.9,0.1) ;
\draw[thick, ->] (1,-1.9).. controls (1,-1.9) and (1,-1.9) .. node[anchor=south]{}(1,-0.1) ;
\draw[thick, ->] (-0.9,-0.1).. controls (-0.1,-0.1) and (0.1,-0.1) .. node[anchor=south]{}(0.9,-0.1);
\end{tikzpicture}
$$
as a sub-quiver. The underlying graph of this sub-quiver is not a Euclidean diagram and thus $H$ is of wild corepresentation type.
  \item[(c)]Suppose that there exist some $C_1, C_2, C_3\in{}^1\mathcal{S}$ such that $\mid{}^1\mathcal{P}{}^{\C_i}\mid\geq1$ for any $1\leq i\leq 3.$ This means that for any $1\leq i\leq 3,$ there exists some non-trivial $(1, \C_i)$-primitive matrix $\X_i\in{}^1\mathcal{P}.$ Combining Lemmas \ref{coro:P_X} and \ref{coro:numberinPX}, for any $1\leq i\leq 3,$ we know that $$\mid\mathcal{P}{}^{\C_i}\mid\geq \mid\mathcal{P}_{\X_1}^{\C_i}\mid+\mid\mathcal{P}_{\X_2}^{\C_i}\mid+\mid\mathcal{P}_{\X_3}^{\C_i}\mid\geq 3.$$
      In such a case, there exist at least $3$ vertexes which are the start vertex of $3$ arrows and $1$ vertex which is the end vertex of $3$ arrows in the separated quiver $\mathrm{Q}(H)_s$.
 As a result, the underlying diagram of $\mathrm{Q}(H)_s$ is not a Euclidean diagram and $H$ is of wild corepresentation type.
  \end{itemize}
  \item[(ii)]Suppose that ${}^1\mathcal{P}=\{\X, \Y\},$ where $\X$ is a non-trivial $(1, \C)$-primitive matrix and $\Y$ is a non-trivial $(1, \D)$-primitive matrix for some $C, D\in\mathcal{S}$ with $\dim_{\k}(C)\geq4$.
      \begin{itemize}
  \item[(a)]If $\dim_{\k}(C)\geq 9$, it follows from Lemma \ref{lem:fpequation} that there exists some $E\in\mathcal{S}$ such that $\mid{}^{\E}\mathcal{P}_{\X}\mid\geq4$.
  According to Lemmas \ref{coro:P_X} and \ref{coro:numberinPX}, we know that $$\mid{}^{\E}\mathcal{P}\mid=\mid{}^{\E}\mathcal{P}_{\X}\mid+\mid{}^{\E}\mathcal{P}_{\Y}\mid\geq5.$$
  This implies that $\mathrm{Q}(H)_s$ contains at least one vertex $E$ which is the end vertex of at least 5 arrows. It follows that the underlying graph of this sub-quiver is not a union of Euclidean diagram, and consequently $H$ is of wild corepresentation type.
  \item[(b)]If $\dim_{\k}(C)=4$, Lemma \ref{lem:fpequation} implies that there exists some $E\in\mathcal{S}$ such that $\mid{}^{\E}\mathcal{P}_{\X}\mid\geq3$. If $\mid{}^{\E}\mathcal{P}_{\X}\mid\geq4$, as in the case of $\dim_{\k}(C)\geq 9$, $\mathrm{Q}(H)_s$ contains at least one vertex $E$ which is the end vertex of at least 5 arrows. This indicates $H$ is of wild corepresentation type.
      If $\mid{}^{\E}\mathcal{P}_{\X}\mid=3$, using Lemma \ref{lem:fpequation}, we have
      \begin{eqnarray} \label{eq:EC=C1+C2+C3}
      E\cdot C=C_1+C_2+C_3
      \end{eqnarray}
      for some $C_1, C_2, C_3\in\mathcal{S}$.
      According to Lemma \ref{lemma:numberinquiver}, we know that for any $1\leq i\leq3$, $C_i\cdot S(C)$ contains $E$ with a nonzero coefficient.
      Suppose that $\sqrt{\dim_{\k}(E)}=n.$ If for any $1\leq i\leq3$, we have $$C_i\cdot S(C)=E.$$ It means that $$\sqrt{\dim_{\k}(C_1)}=\sqrt{\dim_{\k}(C_2)}=\sqrt{\dim_{\k}(C_3)}=\frac{n}{2}.$$
      But (\ref{eq:EC=C1+C2+C3}) implies that $2n=\frac{3}{2}n$, which is impossible.
      Thus there exists at least one $C_j$ such that $C_j\cdot S(C)$ contains some $F\in\mathcal{S}$ with a nonzero coefficient besides $E$, where $1\leq j\leq 3$.
      Combining Lemmas \ref{coro:P_X} and \ref{coro:numberinPX}, we have $$\mid{}^{\E}\mathcal{P}\mid\geq \sum\limits_{i=1}^3\mid{}^{\E}\mathcal{P}_{\X}^{C_i}\mid+\mid{}^{\E}\mathcal{P}_{\Y}\mid\geq 4$$
      and $$\mid\mathcal{P}^{\C_j}\mid\geq\mid{}^{\E}\mathcal{P}^{\C_j}\mid+\mid{}^{\F}\mathcal{P}^{\C_j}\mid\geq2.$$
   As a result, there exist at least one vertex which is the end vertex of $4$ arrows and one vertex which is the start vertex of $4$ arrows in $\mathrm{Q}(H)_s$.
 It is easy to see that $H$ is of wild corepresentation type.
  \end{itemize}
  \item[(iii)]
  \begin{itemize}\item[(a)]Note that if $\dim_{\k}(C)\geq16$, it follows from Lemma \ref{lem:fpequation} that there exists some $E\in\mathcal{S}$ such that $\mid{}^{\E}\mathcal{P}\mid\geq5$. This means that the separated quiver $\mathrm{Q}(H)_s$ contains a vertex which is the end vertex of $5$ arrows and it cannot be a finite disjoint union of Euclidean diagram. We know that $H$ is of wild corepresentation type.
   \item[(b)]If $\dim_{\k}(C)=9$, it follows from Lemma \ref{lem:fpequation} that there exists some $E\in\mathcal{S}$ such that $\mid{}^{\E}\mathcal{P}\mid\geq4$.
   If $\mid{}^{\E}\mathcal{P}\mid\geq5$, a similar argument shows that $H$ is of wild corepresentation type. We only need to consider the case that $\mid{}^{\E}\mathcal{P}\mid=4$.
   In this case, Lemma \ref{lem:fpequation} implies that
   \begin{eqnarray}\label{EC=C1+C2+C3+C4}
   E\cdot C=C_1+C_2+C_3+C_4,
   \end{eqnarray}
   where $C_i\in\mathcal{S}$ for $1\leq i\leq 4.$
   Applying Lemma \ref{lemma:numberinquiver} yields that for any $1\leq i\leq4$, $C_i\cdot S(C)$ contains $E$ with a nonzero coefficient.
      Suppose that $\sqrt{\dim_{\k}(E)}=n.$ If for any $1\leq i\leq4$, we have $$C_i\cdot S(C)=E.$$ It means that $$\sqrt{\dim_{\k}(C_i)}=\frac{n}{3},$$
      for $1\leq i\leq 4$.
      But (\ref{EC=C1+C2+C3+C4}) implies that $3n=\frac{4}{3}n$, which leads to a contradiction.
      Thus there exists at least one $C_j$ such that $C_j\cdot S(C)$ contains some $F\in\mathcal{S}$ with a nonzero coefficient besides $E$, where $1\leq j\leq 4$.
      A similar argument shows that $\mathrm{Q}(H)_s$ contains at least one vertex which is the end vertex of $4$ arrows and one vertex which is the start vertex of $4$ arrows.
The underlying graph of this sub-quiver is not Euclidean. Consequently, $H$ is of wild corepresentation type.
\end{itemize}
  \end{itemize}
\end{proof}
According to Theorem \ref{thm:corepandquiver}, we can prove \cite[Conjecture 4.11 (1)]{YLL23} when $H$ is of finite or tame corepresentation type.
\begin{corollary}
Let $H$ be a finite-dimensional non-cosemisimple Hopf algebra over $\k$ with the dual Chevalley property of finite or tame corepresentation type. Then we have $\mid{}^{1}\mathcal{P}\mid \big| \mid{}^{\C}\mathcal{P}\mid,$ for all $\C\in\mathcal{M}.$
\end{corollary}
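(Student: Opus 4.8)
The plan is to dispose of the statement by separating cases according to the value of $\mid{}^1\mathcal{P}\mid$ allowed by Theorem \ref{thm:corepandquiver}. Since $H$ is of finite or tame corepresentation type, parts $(1)$ and $(2)$ of that theorem leave only two possibilities: either $\mid{}^1\mathcal{P}\mid=1$, or $\mid{}^1\mathcal{P}\mid=2$ together with $\dim_{\k}(C)=1$ for every $C\in{}^1\mathcal{S}$. In the first case there is nothing to prove, because $1$ divides every integer; so the entire content of the corollary is concentrated in the case $\mid{}^1\mathcal{P}\mid=2$, and I would treat this case directly.

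First I would record the elementary counting identity
$$\mid{}^{\C}\mathcal{P}\mid=\sum_{\Y\in{}^1\mathcal{P}}\mid{}^{\C}\mathcal{P}_{\Y}\mid,$$
which follows at once from the disjoint decomposition $\mathcal{P}=\bigcup_{\Y\in{}^1\mathcal{P}}\mathcal{P}_{\Y}$ of Lemma \ref{coro:P_X} and the definition ${}^{\C}\mathcal{P}_{\Y}={}^{\C}\mathcal{P}\cap\mathcal{P}_{\Y}$. Thus it suffices to evaluate each summand $\mid{}^{\C_i}\mathcal{P}_{\Y}\mid=\beta_{ij}$, where $\Y$ is the non-trivial $(1,\C_j)$-primitive matrix attached to some $C_j\in{}^1\mathcal{S}$.

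The crucial step is to show that each such summand equals $1$. By Lemma \ref{lem:fpequation} one has $\beta_{ij}=\sum_{t\in I}\alpha_{i,j}^t$, the total multiplicity appearing in the product $C_i\cdot C_j$ in the fusion ring $\Bbb{Z}\mathcal{S}$. In the case under consideration $\dim_{\k}(C_j)=1$, so $C_j=\k g$ is spanned by a group-like element $g$; multiplying every entry of a basic multiplicative matrix of $C_i$ by $g$ again produces a basic multiplicative matrix of the same size, since the entries stay linearly independent and both the comultiplication and counit conditions persist. Hence $\C_i\odot^{\prime}\C_j$ requires no further block-diagonalisation and $C_i\cdot C_j=C_ig$ is a single simple subcoalgebra, giving $\sum_{t\in I}\alpha_{i,j}^t=1$ and therefore $\beta_{ij}=1$. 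Summing over the two members of ${}^1\mathcal{P}$ then yields $\mid{}^{\C_i}\mathcal{P}\mid=2=\mid{}^1\mathcal{P}\mid$ for every $\C_i\in\mathcal{M}$, from which the divisibility is immediate.

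I expect the only subtle point to be the verification that $C_i\cdot\k g$ is a single simple subcoalgebra for a group-like $g$, i.e. that multiplication by a one-dimensional simple coalgebra acts freely on the $\Bbb{Z}_+$-basis $\mathcal{S}$; everything else is a matter of assembling the combinatorial identities already established. It is worth noting that the argument in fact proves more than is asked: in the tame non-trivial case one obtains the sharp equality $\mid{}^{\C}\mathcal{P}\mid=\mid{}^1\mathcal{P}\mid$ for all $\C\in\mathcal{M}$, not merely divisibility.
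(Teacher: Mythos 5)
Your proof is correct, but it takes a genuinely different route from the paper at the decisive step. The paper organizes the cases by the dimension of the coalgebras in ${}^1\mathcal{S}$: whenever $C\in{}^1\mathcal{S}$ has $\dim_{\k}(C)=1$ (which covers both the finite type case and tame case (i)), it simply cites \cite[Proposition 4.9]{YLL23} for the equality $\mid{}^{1}\mathcal{P}\mid=\mid{}^{\C}\mathcal{P}\mid$, and the remaining case $\mid{}^1\mathcal{P}\mid=1$, $\dim_{\k}(C)=4$ is dispatched trivially. You instead organize by $\mid{}^1\mathcal{P}\mid$ and, in the only nontrivial case $\mid{}^1\mathcal{P}\mid=2$, re-derive the needed equality from scratch: the disjointness of $\mathcal{P}=\bigcup_{\Y\in{}^1\mathcal{P}}\mathcal{P}_{\Y}$ (Lemma \ref{coro:P_X}) gives the counting identity $\mid{}^{\C_i}\mathcal{P}\mid=\sum_{\Y}\mid{}^{\C_i}\mathcal{P}_{\Y}\mid$, Lemma \ref{lem:fpequation} identifies each summand with $\sum_{t\in I}\alpha_{i,j}^t$, and you show this sum equals $1$ because right multiplication by a group-like element carries a basic multiplicative matrix to a basic multiplicative matrix, so that $C_i\cdot \k g$ is a single simple subcoalgebra appearing with coefficient $1$. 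That verification is sound: invertibility of $g$ preserves linear independence of the entries, and the multiplicative and counit conditions are immediate, so no block-diagonalization of $\C_i\odot^{\prime}(g)$ is needed. In effect you have given a self-contained proof of exactly the special case of \cite[Proposition 4.9]{YLL23} that the corollary requires. What the paper's citation buys is brevity; what your argument buys is independence from the external reference, together with the explicit sharp conclusion $\mid{}^{\C}\mathcal{P}\mid=\mid{}^1\mathcal{P}\mid$ for all $\C\in\mathcal{M}$ in the tame case with $\mid{}^1\mathcal{P}\mid=2$ --- a conclusion the cited proposition also yields, so you prove more than the corollary states, though not more than the paper implicitly knows.
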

\begin{proof}
Note that for any $C\in{}^1\mathcal{S}$, if $\dim_{\k}(C)=1$, it follows from \cite[Proposition 4.9]{YLL23} that $$\mid{}^{1}\mathcal{P}\mid=\mid{}^{\C}\mathcal{P}\mid.$$
If $\mid{}^1\mathcal{P}\mid=1$ and $\dim_{\k}(C)=4$, where $C\in{}^1\mathcal{S}$. Lemma \ref{lem:fpequation} gives
 $$1=\mid{}^{1}\mathcal{P}\mid \big| \mid{}^{\C}\mathcal{P}\mid.$$
\end{proof}

\section{Coradically graded Hopf algebras of tame corepresentation type}\label{section4}
The main aim of this section is to describe the structure of coradically graded Hopf algebras of tame corepresentation type.

Let $H, H^\prime$ be Hopf algebras and $\pi:H\rightarrow H^\prime$ and $i: H^\prime \rightarrow H$ Hopf homomorphisms. Assume that $\pi\circ i=id_{H^\prime}$, so that $\pi$ is surjective and $i$ is injective.
Define $$R:=\{h\in H\mid (id\otimes\pi)\Delta (h)=h\otimes 1\}.$$
According to \cite[Theorem 3]{Rad85}, we know that
$$
H\cong R\times H^\prime
$$
as Hopf algebras, where ``$\times$" was called \textit{biproduct} in \cite{Rad85} and bosonization in \cite{Mar94}. As a linear space, $ R\times H^\prime=R\otimes H^\prime.$ Its multiplication and comultiplication are the usual smash product and smash coproduct respectively. In addition, $R$ is a braided Hopf algebra in ${}^{H^\prime}_{H^\prime}\mathcal{YD}$, the category of Yetter-Drinfeld modules over $H^\prime$. See, for example, \cite{AS98, Mar94, Rad85}.

Recall that a finite-dimensional Hopf algebra $H$ over $\k$ is said to have the \textit{Chevalley property}, if radical $Rad(H)$ is a Hopf ideal. According to \cite[Propersition 4.2]{AEG01}, $H$ has the Chevalley property if and only if $H^*$ has the dual Chevalley property.

Let $H$ be a finite-dimensional Hopf algebra with the Chevalley property and $J_H$ its Jacobson radical. Denote $\operatorname{gr}^a(H)$ its radically graded algebra, i.e., $\operatorname{gr}^a(H)=H/J_H\oplus J_H/J_H^2\oplus\cdots \oplus J_H^{m-1},$ if $J_H^{m}=0$. According to \cite[Lemma 5.1]{Liu06}, we know that $\operatorname{gr}^a(H)$ is a radically graded Hopf algebra. Clearly, $H/J_H=\operatorname{gr}^a(H)(0)$ is a Hopf subalgebra of $\operatorname{gr}^a(H)$ and there exists a natural Hopf algebra epimorphism $$\pi^a:\operatorname{gr}^a(H)\rightarrow H/J_H$$ with a retraction of the inclusion. Define $$A_{H}:=\{h\in \operatorname{gr}^a(H)\mid (id\otimes\pi^a)\Delta (h)=h\otimes 1\}.$$ By \cite[Theorem 3]{Rad85}, we know that
$$
\operatorname{gr}^a(H)\cong A_H\times H/J_H.
$$
as Hopf algebras.
\begin{proposition}\label{prop:samerep}
Let $\k$ be an algebraically closed field of characteristic 0 and let $H$ be a finite-dimensional Hopf algebra over $\k$ with the Chevalley property.
Then
\begin{itemize}
  \item[(1)]$A_H$ and $\operatorname{gr}^a(H)$ have the same representation type;
  \item[(2)]$A_H$ is a local graded Frobenius algebra.
\end{itemize}
\end{proposition}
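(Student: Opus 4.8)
The plan is to read the bosonization decomposition $\operatorname{gr}^a(H)\cong A_H\times (H/J_H)$ established just above the statement as an algebra isomorphism $\operatorname{gr}^a(H)\cong A_H\# B$, where $B:=H/J_H$ is a semisimple Hopf algebra and $\#$ is the smash product attached to the $B$-module algebra structure on $A_H$. Since $\operatorname{char}\k=0$ and $\k$ is algebraically closed, the Larson--Radford theorem makes $B$ also cosemisimple, so that $B^*$ is semisimple as well. Part (1) will rest entirely on a comparison principle for separable extensions together with the smash-product duality theorem, while part (2) will combine connectedness of the grading with integral theory for braided Hopf algebras.

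For (1), I would first record the comparison lemma: if $R\subseteq S$ is a ring extension with $S$ finitely generated free as a left $R$-module and the extension separable (the multiplication $S\otimes_R S\to S$ splits as an $S$-bimodule map), then restriction is a separable functor, so every $S$-module is a direct summand of $S\otimes_R(-)$ applied to its restriction; consequently the indecomposable $S$-modules are, up to summands, induced from indecomposable $R$-modules, forcing $R$ of finite (resp. tame, resp. non-wild) type to yield $S$ of finite (resp. non-wild, resp. non-wild) type. Next I would verify that any smash product $R\# C$ with $C$ semisimple is a separable extension of $R$: a normalized two-sided integral $t\in C$ with $\varepsilon(t)=1$ produces the separability element $\sum(1\# t_{(1)})\otimes_R(1\# S(t_{(2)}))$, which is $(R\# C)$-central and multiplies to $1$. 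Applying this to $R=A_H,\ C=B$ and to $R=A_H\# B,\ C=B^*$ (all extensions being free on both sides) bounds the types of $A_H\# B$ and of $(A_H\# B)\# B^*$ from above by those of $A_H$ and $A_H\# B$ respectively. To close the loop I would invoke the Blattner--Montgomery duality isomorphism $(A_H\# B)\# B^*\cong\M_{\dim_\k B}(A_H)$, which is Morita equivalent to $A_H$ and hence has the same representation type; chaining the two inequalities with this equality gives
\[
\operatorname{type}(A_H)=\operatorname{type}\big((A_H\# B)\# B^*\big)\le\operatorname{type}(A_H\# B)\le\operatorname{type}(A_H),
\]
so all three coincide, and since $A_H\# B\cong\operatorname{gr}^a(H)$ this is exactly (1).

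For (2), the grading of $\operatorname{gr}^a(H)$ restricts to $A_H=\bigoplus_{n\ge0}A_H(n)$ with $A_H(n)=A_H\cap\operatorname{gr}^a(H)(n)$. In degree $0$ one has $\operatorname{gr}^a(H)(0)=B$ with $\pi^a$ the identity there, so the defining condition $(\id\otimes\pi^a)\Delta(h)=h\otimes1$ becomes $\Delta(h)=h\otimes1$; applying $\varepsilon\otimes\id$ gives $h\in\k1$, whence $A_H(0)=\k$. Thus $A_H$ is connected graded, its augmentation ideal $\bigoplus_{n\ge1}A_H(n)$ is a nilpotent maximal ideal, and $A_H$ is local. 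For the Frobenius property I would use that $A_H$ is a finite-dimensional Hopf algebra in the rigid braided category ${}^{B}_{B}\mathcal{YD}$: by the braided analogue of the Larson--Sweedler theorem it carries a nonzero integral, its top nonzero homogeneous component is one-dimensional, and the multiplication pairings $A_H(i)\otimes A_H(N-i)\to A_H(N)\cong\k$ are non-degenerate, yielding a homogeneous non-degenerate Frobenius form.

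The genuinely routine parts are the local/graded claim and the construction of the separability element. The main obstacle lies in (2): one must invoke existence and non-degeneracy of integrals for braided Hopf algebras in ${}^{B}_{B}\mathcal{YD}$ rather than for ordinary Hopf algebras, and check that the induced Frobenius form can be taken homogeneous. In (1) the delicate point is to confirm that the separable-extension comparison preserves \emph{tameness} and not merely finiteness and wildness, i.e. that induction sends one-parameter families to one-parameter families up to finitely many direct summands.
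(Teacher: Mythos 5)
Your proposal is correct, and in substance it is the paper's own proof with the citations unpacked rather than quoted. For part (1) the paper's entire argument is: $\operatorname{gr}^a(H)\cong A_H\#(H/J_H)$ as algebras, $H/J_H$ is semisimple and hence cosemisimple by Larson--Radford \cite{LR88}, so the claim is a direct consequence of \cite[Theorem 4.5]{Liu06}, which says that $A$ and $A\#H^\prime$ have the same representation type whenever $H^\prime$ is semisimple and cosemisimple. Your reduction is identical (same decomposition, same use of Larson--Radford), and your separability element $\sum(1\# t_{(1)})\otimes_{A_H}(1\# S(t_{(2)}))$, the chain of type inequalities, and the duality isomorphism $(A_H\# B)\# B^*\cong \mathrm{M}_{\dim_\k B}(A_H)$ are precisely the standard proof of the cited theorem, so nothing diverges mathematically. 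The point you flag as delicate---that induction along a finite free separable extension sends one-parameter families to finitely many one-parameter families, so tameness cannot degenerate to wildness---is exactly the nontrivial content that the citation encapsulates; it is an established result in the literature, and the paper does not reprove it either, so leaving it as a quoted fact is legitimate rather than a gap.

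For part (2) the paper again argues by citation, to \cite[Proposition 5.3 (ii)]{Liu06}. Your computation that $A_H(0)=\k 1$, hence that $A_H$ is connected graded with nilpotent augmentation ideal and therefore local, is the routine part and is correct. For the Frobenius property your route through integrals in ${}^{B}_{B}\mathcal{YD}$ does work, but as written the logic is compressed: the mere existence of a nonzero integral does not by itself give one-dimensionality of the top homogeneous component or non-degeneracy of the pairings $A_H(i)\otimes A_H(N-i)\to A_H(N)$; you need the full braided Larson--Sweedler/Frobenius theorem for finite-dimensional Hopf algebras in a rigid braided category, plus homogeneity of the integral, so this step is again a citation to the literature rather than a self-contained argument. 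An alternative that avoids braided integrals: $\operatorname{gr}^a(H)$ is a finite-dimensional Hopf algebra, hence self-injective; it is free as a right $A_H$-module, so restriction to $A_H$ preserves injectivity; and $A_H$ is a direct summand of $\operatorname{gr}^a(H)$ as a left $A_H$-module, hence self-injective, and a local finite-dimensional self-injective algebra is Frobenius, with the grading forcing a homogeneous form. Either way, the two facts you leave unproven are exactly the two facts the paper outsources to \cite{Liu06}, so your architecture is sound.
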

\begin{proof}
\begin{itemize}
  \item[(1)]Note that as an algebra, $$\operatorname{gr}^a(H)\cong A_H\# H/J_H,$$ and the multiplication of $A_H \#H/J_H$ is usual smash product. Since $H/J_H$ is a finite-dimensional semisimple Hopf algebra, it follows from \cite[Theorem 3.3]{LR88} that $H/J_H$ is cosemisimple. Thus $(1)$ is a direct consequence of \cite[Theorem 4.5]{Liu06}.
  \item[(2)]This can be obtained by the same reason in the proof of \cite[Proposition 5.3 (ii)]{Liu06}.
\end{itemize}
\end{proof}

Let $H$ be a finite-dimensional Hopf algebra with the dual Chevalley property. Denote by $\operatorname{gr}^c(H)$ its coradically graded Hopf algebra of $H$, i.e., $\operatorname{gr}^c(H)=\bigoplus_{n\geq0}H_n/H_{n-1}$, where $H_{-1}=0.$ In fact, there exists a natural Hopf algebra epimorphism $$\pi^c:\operatorname{gr}^c(H)\rightarrow H_0$$ with a retraction of the inclusion. Define $$R_{H}:=\{h\in \operatorname{gr}^c(H)\mid (id\otimes\pi^c)\Delta (h)=h\otimes 1\}.$$
It follows from \cite[Theorem 3]{Rad85} that
$$
\operatorname{gr}^c(H)\cong R_H\times H_0
$$
as Hopf algebras.
The following theorem gives the structure of coradically graded Hopf algebras of tame corepresentation type.
\begin{theorem}\label{thm:tamestructure}
Let $\k$ be an algebraically closed field of characteristic 0 and let $H$ be a finite-dimensional Hopf algebra over $\k$ with the dual Chevalley property. Then $\operatorname{gr}^c(H)$ is of tame corepresentation type if and only if $$\operatorname{gr}^c(H)\cong (\k\langle x,y\rangle/I)^* \times H_0$$ for some ideal $I$  of the following forms:
\begin{itemize}
  \item[(1)]$I=(x^2-y^2, yx-ax^2, xy)$ for $0\neq a\in\k$;
  \item[(2)]$I=(x^2, y^2, (xy)^m-a(yx)^m)$ for $0\neq a\in\k$ and $m\geq 1$;
  \item[(3)]$I=(x^n-y^n, xy, yx)$ for $n\geq 2$;
  \item[(4)]$I=(x^2, y^2, (xy)^mx-(yx)^my)$ for $m\geq1$.
\end{itemize}
\end{theorem}
\begin{proof}
``If part": Combining \cite[Theorem 3.1]{Liu06} and \cite[Lemma 4.2]{Liu13}, we know that $\k\langle x,y\rangle/I$ is a tame algebra. Since a finite-dimensional Hopf algebra $H_0$ is semisimple if and only if it is cosemisimple, the desired conclusion follows from \cite[Theorem 4.5]{Liu06}.

``Only if part":Using Proposition \ref{prop:samerep}, we know that $\operatorname{gr}^a(H^*)$ is of tame representation type if and only if $A_{H^{*}}$ is of tame representation type.
Since $$\operatorname{gr}^c(H)\cong (\operatorname{gr}^a(H^*))^*$$ as Hopf algebra, one can conclude that $\operatorname{gr}^c(H)$ is of tame corepresentation type if and only if $A_{H^{*}}$ is of tame representation type. According to \cite[Theorem 3.1]{Liu06} and \cite[Lemma 4.2]{Liu13}, as a tame local graded Frobenius algebra, $$A_{H^{*}}\cong\k\langle x,y\rangle/I.$$
It follows from \cite[Theorem 5.1]{Mol77} that
$$\operatorname{gr}^c(H)\cong (\operatorname{gr}^a(H^*))^*\cong (A_{H^*}\times H^*/J_{H^*})^*\cong (A_{H*})^*\times H_0.$$
\end{proof}
According to \cite[Theorem 4.1.2]{Bes97}, if $R$ is a Hopf algebra in ${}^{H_0}_{H_0}\mathcal{YD}$, then the bosonization $R \times H_0$ forms a Hopf algebra. However, for a tame local graded Frobenius algebra $A$, the above theorem does not guarantee that there exists a finite-dimensional semisimple Hopf algebra $H^\prime$ such that $A^{*}$ is a braided Hopf algebra in ${}^{H^\prime}_{H^\prime}\mathcal{YD}$. In other words, for the ideals $I$ listed in the above theorem, it remains unclear whether $(\Bbbk \langle x,y\rangle / I)^{*} \times H^\prime$ is a Hopf algebra.

\begin{question}
For a tame local graded Frobenius algebra $A$, give an efficient method to determine that whether there is a cosemisimple Hopf algebra $H^\prime$ satisfying $A$ is a braided Hopf algebra in ${}^{H^\prime}_{H^\prime}\mathcal{YD}$. If such $H^\prime$ exists, then find all of them.
\end{question}
The question above exactly recovers \cite[Problem 5.1]{Liu06}.
We will discuss this question in the subsequent sections under certain conditions.

\section{Link-indecomposable component containing $\k 1$}\label{section5}
Let us first introduce the notion of link-indecomposable components.
\begin{definition}\emph{(}\cite[Definition 1.1]{Mon95}\emph{)}
A subcoalgebra $H^\prime$ of coalgebra $H$ is called \textit{link-indecomposable} if the link quiver $\mathcal{Q}(H^\prime)$ of $H^\prime$ is connected (as an undirected graph).
A \textit{link-indecomposable component} of $H$ is a maximal link-indecomposable subcoalgebra.
\end{definition}

We now characterize the coradical of $H_{(1)}$.
\begin{lemma}\label{lem:generate}
Let $H$ be a finite-dimensional Hopf algebra over $\k$ with the dual Chevalley property. Then the coradical of the link-indecomposable component $H_{(1)}$ containing $\k 1$ is generated by $\{\span(C)\mid C\in {}^1 \mathcal{S}\}\cup\{\span(S(C))\mid C\in {}^1 \mathcal{S}\}.$
\end{lemma}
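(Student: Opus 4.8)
The plan is to prove that the coradical of $H_{(1)}$ is precisely the Hopf subalgebra of $H_0$ generated by the set $\{\span(C)\mid C\in {}^1\mathcal{S}\}\cup\{\span(S(C))\mid C\in {}^1\mathcal{S}\}$. I would argue by establishing two inclusions. Let me denote by $K$ the subcoalgebra of $H_0$ generated (as an algebra) by the indicated simple subcoalgebras and their antipode images; since $H_0$ is a Hopf subalgebra and these generators are closed under $S$, the subalgebra $K$ is in fact a Hopf subalgebra of $H_0$. The heart of the matter is to identify $K$ with $(H_{(1)})_0$, the coradical of the link-indecomposable component containing $\k 1$.

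**First I would** recall the combinatorial description of $H_{(1)}$ coming from the link quiver. By definition $H_{(1)}$ is the maximal link-indecomposable subcoalgebra containing $\k 1$, so its set of simple subcoalgebras is exactly the connected component of the vertex $\k 1$ in the undirected link quiver $\mathrm{Q}(H)$. Thus $(H_{(1)})_0 = \bigoplus_{D} \span(D)$, where $D$ ranges over all simple subcoalgebras lying in the connected component of $\k 1$. The strategy is then to show that this set of vertices coincides with the set of simple subcoalgebras appearing as summands in products $C_1^{\lambda_1}\cdot C_2^{\lambda_2}\cdots C_n^{\lambda_n}$ with each $C_i\in {}^1\mathcal{S}$ and $\lambda_i\in\{-1,1\}$. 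This is exactly the content already extracted in Proposition \ref{Prop:connected}: a vertex $D$ is connected to $\k 1$ precisely when $D$ occurs with nonzero coefficient in such a product. Restricting that proposition to the component of $\k 1$ (rather than assuming the whole quiver is connected) gives the walk-characterization I need.

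**The key bridge** is then to translate ``$D$ occurs in a product of the $C_i^{\lambda_i}$'' into ``$\span(D)\subseteq K$''. Here I would use the definition of the multiplication on $\Z\mathcal{S}$ from equation (\ref{equationCD}): if $B\cdot C = \sum_i E_i$ in $\Z\mathcal{S}$, then the simple subcoalgebras $E_i$ are exactly the simple summands of the product coalgebra, i.e. $\span(E_i)\subseteq \span(B)\cdot\span(C)$ inside $H_0$, and conversely $\span(B)\cdot\span(C) = \bigoplus_i \span(E_i)$. Since $C_i^{\lambda_i}$ corresponds to either $\span(C_i)$ or $\span(S(C_i))$, both of which lie in $K$ by construction, and $K$ is closed under multiplication, every simple summand $D$ of $C_1^{\lambda_1}\cdots C_n^{\lambda_n}$ satisfies $\span(D)\subseteq K$. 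This yields $(H_{(1)})_0 \subseteq K$. For the reverse inclusion, each generator $\span(C)$ with $C\in {}^1\mathcal{S}$ sits in the component of $\k 1$ because the non-trivial $(1,\C)$-primitive matrix guaranteed by $C\in {}^1\mathcal{S}$ produces an arrow between $\k 1$ and $C$ in $\mathrm{Q}(H)$; similarly $S(C)\in\mathcal{S}^1$ by Lemma \ref{lemma:P^1=1^P}(3) gives an arrow between $C$ (hence $\k1$) and $S(C)$. Since $(H_{(1)})_0$ is itself a Hopf subalgebra (being the coradical of a link-indecomposable Hopf-subcoalgebra component), it is closed under products, so it contains the whole algebra $K$ generated by these vertices, giving $K\subseteq (H_{(1)})_0$.

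**The main obstacle** I anticipate is the careful verification that the algebra generated by $\{\span(C), \span(S(C))\mid C\in{}^1\mathcal{S}\}$ is exactly the span of the vertices in the $\k 1$-component, rather than something larger or smaller; this requires knowing that the product of two simple subcoalgebras in the component stays in the component and, crucially, that $(H_{(1)})_0$ is closed under multiplication. The latter needs the fact that $H_{(1)}$ is a Hopf subalgebra of $H$ (so that its coradical is a sub-Hopf-algebra of $H_0$), which follows from the general theory of link-indecomposable components for Hopf algebras with the dual Chevalley property, together with Proposition \ref{Prop:connected} ensuring that multiplying vertices of the component corresponds to moving along walks that stay within the component. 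Once these closure properties are in hand, the double inclusion closes and the lemma follows.
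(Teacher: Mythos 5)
Your proposal is correct and follows essentially the same route as the paper: the paper's (two-line) proof cites \cite[Theorem 4.8(3)]{Li22a} for the fact that $H_{(1)}$ is a link-indecomposable Hopf subalgebra, so that its link quiver is connected and its coradical is closed under multiplication, and then invokes Proposition \ref{Prop:connected} to identify the simple subcoalgebras of $H_{(1)}$ with those occurring in products $C_1^{\lambda_1}\cdots C_n^{\lambda_n}$, $C_i\in{}^1\mathcal{S}$. Your double-inclusion argument is just the expanded version of this, the only cosmetic difference being that you apply the vertex-wise form of Proposition \ref{Prop:connected} (which its proof indeed establishes) to the component of $\k1$ inside $\mathrm{Q}(H)$, whereas the paper applies the proposition as stated to the connected quiver $\mathrm{Q}(H_{(1)})$.
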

\begin{proof}
It is directly from \cite[Theorem 4.8 (3)]{Li22a} that $H_{(1)}$ is a link-indecomposable Hopf algebra. This means that the link quiver $\mathcal{Q}(H_{(1)})$ of $H_{(1)}$ is connected. Using Proposition \ref{Prop:connected}, we can complete the proof.
\end{proof}
Now we discuss the relation between the corepresentation type of $H$ and that of $H_{(1)}$.
\begin{lemma}\label{lem:1tame}
Let $H$ be a finite-dimensional Hopf algebra over $\k$ with the dual Chevalley property of tame corepresentation type.
Then the link-indecomposable component $H_{(1)}$ containing $\k1$ is of tame corepresentation type.
\end{lemma}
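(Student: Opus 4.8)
The plan is to combine a block decomposition of $H$ coming from its link quiver with the numerical criterion of Theorem~\ref{thm:corepandquiver}, and then to invoke Drozd's trichotomy. The guiding observation is that the link-indecomposable components of $H$ are exactly the connected components of $\mathrm{Q}(H)$, which by \cite{CHZ06} coincides with $\Gamma(H^*)^{\mathrm{a}}$. Since distinct components share no arrows (by maximality), $H$ decomposes as a coalgebra direct sum $H=\bigoplus_i H_{(i)}$ in the sense of \cite{Mon95}, and dually $H^*\cong\prod_i H_{(i)}^*$ as algebras, with $H_{(1)}^*$ one of the direct factors.

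First I would record that $H_{(1)}^*$ is not of wild representation type. A wild direct factor would force the whole product $\prod_i H_{(i)}^*$ to be wild, since the modules witnessing wildness of a factor restrict to $H^*$-modules along the algebra projection $H^*\twoheadrightarrow H_{(1)}^*$ and this functor preserves indecomposability and reflects isomorphisms; this would contradict the hypothesis that $H$, equivalently $H^*$, is tame. Hence $H_{(1)}$ is not of wild corepresentation type. Note that merely being a subcoalgebra would be too weak here (quotient algebras of tame algebras can be wild); it is the fact that $H_{(1)}^*$ is a direct \emph{factor} that makes this step work.

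Next I would show $H_{(1)}$ is not of finite corepresentation type, using that the data ${}^1\mathcal{S}$ and ${}^1\mathcal{P}$ entering Theorem~\ref{thm:corepandquiver} are intrinsic to the component of $\k1$. Indeed $H_{(1)}$ is a finite-dimensional Hopf algebra with the dual Chevalley property (by \cite[Theorem~4.8(3)]{Li22a} together with Lemma~\ref{lem:generate}), and by Lemma~\ref{lem:generate} every $C\in{}^1\mathcal{S}$ together with the non-trivial $(1,\C)$-primitive matrices linking $\k1$ to $C$ already lies in $H_{(1)}$. Because $\mathrm{Q}(H_{(1)})$ is the full subquiver of $\mathrm{Q}(H)$ supported on the connected component of $\k1$, the invariants $\mid{}^1\mathcal{P}\mid$ and $\{\dim_{\k}(C)\mid C\in{}^1\mathcal{S}\}$ are the same whether computed in $H$ or in $H_{(1)}$. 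Since $H$ is tame it is not of finite corepresentation type, so by Theorem~\ref{thm:corepandquiver}(1) the pair $(\mid{}^1\mathcal{P}\mid,\dim_{\k}(C))$ is not $(1,1)$; as these invariants agree with those of $H_{(1)}$, the same part~(1) applied to $H_{(1)}$ shows $H_{(1)}$ is not of finite corepresentation type.

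Finally, Drozd's trichotomy (\cite{Dro79}) asserts that $H_{(1)}^*$ is exactly one of finite, tame, or wild type; having excluded the first and the last, $H_{(1)}$ must be of tame corepresentation type. The step I expect to require the most care is the bookkeeping in the third paragraph: verifying that passing to the component $H_{(1)}$ leaves the local data near $\k1$ unchanged — that no simple subcoalgebra linked to $\k1$, and no contributing primitive matrix, is lost when one restricts to $H_{(1)}$ — and confirming that the link-indecomposable decomposition is genuinely a decomposition of coalgebras, so that $H^*$ really splits as a product in which $H_{(1)}^*$ appears as a block.
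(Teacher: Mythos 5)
Your argument is correct in its main line, but it differs from the paper's on the key step, and one of your side remarks is actually false. For excluding wildness, the paper does not use the direct-sum decomposition into link-indecomposable components at all: it only observes that $H_{(1)}$ is a subcoalgebra, so finite-dimensional $H_{(1)}$-comodules form a full subcategory of $H$-comodules (equivalently, $H_{(1)}^*$ is a quotient algebra of $H^*$), and then cites \cite[Theorem 1.11]{SA07} to conclude that wildness of $H_{(1)}^*$ would force wildness of $H^*$. Your parenthetical claim that ``merely being a subcoalgebra would be too weak here (quotient algebras of tame algebras can be wild)'' is wrong: if $B=A/I$ is a quotient of a finite-dimensional algebra $A$, restriction along $A\twoheadrightarrow B$ is a full exact embedding of $B$-modules into $A$-modules that preserves indecomposability and reflects isomorphisms, so wildness of $B$ always implies wildness of $A$; hence tameness does pass to quotient algebras, and in particular to duals of subcoalgebras --- this is exactly the fact the paper invokes. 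Your alternative justification via Montgomery's decomposition $H=\bigoplus_i H_{(i)}$ and $H^*\cong\prod_i H_{(i)}^*$ is valid (and the wildness-transport argument you run along the projection is the same one), but it is strictly more machinery than needed; it buys nothing here beyond what the subcoalgebra surjection already gives. The remaining two steps --- ruling out finite corepresentation type by applying Theorem \ref{thm:corepandquiver}(1) to $H_{(1)}$ after checking that ${}^1\mathcal{P}$ and ${}^1\mathcal{S}$ are unchanged when computed in $H_{(1)}$, and then concluding by Drozd's trichotomy \cite{Dro79} --- coincide with the paper's proof, with your third paragraph usefully making explicit the invariance of the local data near $\k1$ that the paper leaves implicit.
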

\begin{proof}
Since $H$ is of tame corepresentation type, it follows from Theorem \ref{thm:corepandquiver} that either $\mid {}^1\mathcal{P}\mid>1$ or $\dim_{\k}(C)>1$ for $C\in{}^1\mathcal{S}$.
Hence $H_{(1)}$ is not of finite corepresentation type.
On the other hand, there is an inclusion from the category of finite-dimensional right $H_{(1)}$-comodules to that of finite-dimensional right $H$-comodules. Suppose that $H_{(1)}$ is of wild corepresentation type. It follows that $H_{(1)}^*$ is a wild algebra. Hence by \cite[Theorem 1.11]{SA07}, $H^*$ is a wild algebra, which means that $H$ is of wild corepresentation type. This leads to a contradiction. Therefore, $H_{(1)}$ is of tame corepresentation type by the fundamental result of \cite{Dro79}.
\end{proof}
In the following part, let $H=\bigoplus_{i=0}^nH(i)$ be a finite-dimensional coradically graded Hopf algebra over $\k$. Then it has the dual Chevalley property. Note that there exists a natural Hopf algebra epimorphism $$\pi:H\rightarrow H_0$$ with a retraction of the inclusion.
We now give a more precise description of the structure of $R_H$, where $$R_{H}=\{h\in H\mid (id\otimes\pi)\Delta (h)=h\otimes 1\}.$$

\begin{lemma}\label{lem:Rin1}
Let $H$ be a finite-dimensional coradically graded Hopf algebra over $\k$. Then we have $R_H\subseteq H_{(1)}$.
\end{lemma}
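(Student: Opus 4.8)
The plan is to prove the apparently stronger statement that \emph{every} $h\in H$ with $(\id\otimes\pi)\Delta(h)=h\otimes 1$ already lies in $H_{(1)}$; since this is precisely the defining condition for membership in $R_H$, it yields $R_H\subseteq H_{(1)}$ with no separate treatment of the graded pieces. The key external input is Montgomery's decomposition of a coalgebra into its link-indecomposable components (\cite{Mon95}): as a coalgebra one has a direct sum $H=\bigoplus_{\sigma}H_{(\sigma)}$ in which each component $H_{(\sigma)}$ is a subcoalgebra, so that $\Delta(H_{(\sigma)})\subseteq H_{(\sigma)}\otimes H_{(\sigma)}$, and $H_{(1)}$ is the component containing $\k 1$. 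Writing $h=\sum_{\sigma}h_{(\sigma)}$ with $h_{(\sigma)}\in H_{(\sigma)}$, the goal reduces to showing $h_{(\sigma)}=0$ for all $\sigma\neq 1$.

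The first step is to record the compatibility of $\pi$ with this decomposition, namely $\pi(H_{(\sigma)})\subseteq H_{(\sigma)}$. Since $H$ is coradically graded, $\pi$ is the projection onto the degree-zero part $H(0)=H_0$, annihilating every $H(n)$ with $n\geq 1$. Because all simple subcoalgebras sit in degree $0$ and the wedge of homogeneous subcoalgebras is again homogeneous, each component $H_{(\sigma)}$ — being the wedge-closure of its degree-zero coradical — is a graded subcoalgebra; hence $\pi(H_{(\sigma)})=H_{(\sigma)}\cap H_0\subseteq H_{(\sigma)}$. I expect this to be the main technical point: one must verify that the coradical grading and the link-component decomposition are compatible, so that applying $\pi$ cannot move mass between components.

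Granting this, the computation is short. From $\Delta(h_{(\sigma)})\in H_{(\sigma)}\otimes H_{(\sigma)}$ together with $\pi(H_{(\sigma)})\subseteq H_{(\sigma)}$ we get $(\id\otimes\pi)\Delta(h_{(\sigma)})\in H_{(\sigma)}\otimes H_{(\sigma)}$, so in the block decomposition $H\otimes H=\bigoplus_{\alpha,\beta}H_{(\alpha)}\otimes H_{(\beta)}$ the element $(\id\otimes\pi)\Delta(h)=\sum_{\sigma}(\id\otimes\pi)\Delta(h_{(\sigma)})$ is supported on the diagonal blocks $(\sigma,\sigma)$. On the other hand $1\in H_{(1)}$, so the right-hand side $h\otimes 1=\sum_{\sigma}h_{(\sigma)}\otimes 1$ is supported on the blocks $(\sigma,1)$. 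Comparing the $(\sigma,\sigma)$-block for $\sigma\neq 1$, where the right-hand side contributes nothing, gives $(\id\otimes\pi)\Delta(h_{(\sigma)})=0$. Applying $\id\otimes\varepsilon$ and using that $\varepsilon=\varepsilon|_{H_0}\circ\pi$ (as $\pi$ is a coalgebra map) yields $h_{(\sigma)}=(\id\otimes\varepsilon)\Delta(h_{(\sigma)})=0$ for every $\sigma\neq 1$, whence $h=h_{(1)}\in H_{(1)}$.

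As a sanity check one can revisit the degree-one part directly: the coinvariance condition forces a homogeneous $h\in H(1)$ to be a combination of entries of $(\C,1)$-primitive matrices (those with right coalgebra $\k 1$), and each such entry $x_{ij}$ satisfies $\Delta(x_{ij})=\sum_k c_{ik}\otimes x_{kj}+x_{ij}\otimes 1$ with $\C$ the multiplicative matrix of some $C\in\mathcal{S}^1$; this exhibits an arrow from $\k 1$ to $C$ in the link quiver, so that $C$ and $x_{ij}$ lie in the component of $\k 1$, consistently with Lemma \ref{lem:generate} and Proposition \ref{Prop:connected}. This gives an alternative inductive route (base case degree one, then an induction on the coradical filtration using that $R_H$ is a left coideal), but the component-decomposition argument above has the advantage of handling all degrees simultaneously and avoiding any appeal to generation of $R_H$ in degree one.
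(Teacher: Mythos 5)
Your proof is correct, but it takes a genuinely different route from the paper's. The paper never invokes Montgomery's general decomposition into link-indecomposable components; instead it uses the decomposition $H=\bigoplus_{C\in\mathcal{S}_0}CH_{(1)}$ into ``cosets'' of $H_{(1)}$ by simple subcoalgebras, quoted from \cite[Corollary 4.10]{Li22a} (a result specific to Hopf algebras with the dual Chevalley property), and then argues elementwise: for nonzero $x\in CH_{(1)}$ with $C\in\mathcal{S}_0\setminus\{\k1\}$ one has $(\id\otimes\pi)\Delta(x)\in CH_{(1)}\otimes\pi(CH_{(1)})$, the grading gives $\pi(CH_{(1)}(i))=0$ for $i\geq1$, and the unital based ring structure on $\mathbb{Z}\mathcal{S}$ (Lemma~\ref{Prop:basedring}) forces $1\notin CH_{(1)}$, so the equation $(\id\otimes\pi)\Delta(x)=x\otimes1$ is impossible. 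Both proofs therefore rest on the same two pillars --- a direct-sum decomposition of $H$ into subcoalgebras, exactly one of which contains $\k1$, together with compatibility of $\pi$ with that decomposition --- but they source them differently: you take the decomposition from pure coalgebra theory \cite{Mon95} and must then justify that each component is graded (your wedge-closure claim is the one step you should either cite or prove; alternatively, note that the grading automorphisms $h\mapsto t^{\deg h}h$, $t\in\k^{\times}$, fix every simple subcoalgebra and hence every component, so over an infinite field each component is graded), whereas the paper gets gradedness of its summands essentially for free from $C\subseteq H(0)$ but must invoke the based-ring lemma to know $1\notin CH_{(1)}$. Your approach buys generality and self-containedness: it uses no Hopf-theoretic input beyond $\pi$ being the coalgebra projection onto degree zero, so it actually proves the statement for any coradically graded coalgebra; moreover, your block-support-plus-counit computation treats a general $h\in R_H$ as a sum over components, making explicit a reduction that the paper's single-coset argument leaves implicit (to conclude $R_H\subseteq H_{(1)}$ from ``no nonzero $x\in CH_{(1)}$ lies in $R_H$'' one must still project $(\id\otimes\pi)\Delta(h)=h\otimes1$ onto the blocks $CH_{(1)}\otimes H$). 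What the paper's route buys is economy within its own framework: the complementary summands are identified concretely as the products $CH_{(1)}$, reusing machinery already developed elsewhere in the paper.
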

\begin{proof}
Define an equivalence relation on $\mathcal{S}$ by declaring $C$ and $D$ are equivalent if $CH_{(1)}=DH_{(1)}$. Let $\mathcal{S}_{0}\subseteq \mathcal{S}$ be a full set of chosen non-related representatives with respect to this equivalence relation.
By \cite[Corollary 4.10]{Li22a}, we have $$H=\bigoplus_{C\in S_0} C H_{(1)}.$$
Take any nonzero $x\in C H_{(1)}$ with $C\in\mathcal{S}_0\setminus\{\k1\}$.
By \cite[Theorem 4.8 (3)]{Li22a},
\begin{eqnarray*}
(id\otimes \pi)\Delta(x)=(id\otimes \pi)\Delta(\sum\limits_{i=1}^n c_iy_i)
\subseteq(id\otimes \pi)(C H_{(1)}\otimes CH_{(1)})
\end{eqnarray*}
Since $H$ is a coradically graded, it follows that $\pi(CH_{(1)}(i))=0$ for $i\geq 1,$ where $H_{(1)}(i)=H_{(1)}\cap H(i).$
By Lemma \ref{Prop:basedring}, $\Bbb{Z}\mathcal{S}$ is a unital based ring, and $1\notin CH_{(1)}.$ Hence $x\notin R_H$, so $R_H\subseteq H_{(1)}$.
\end{proof}
Moreover, $H_{(1)}=\bigoplus_{i=0}^nH_{(1)}(i)$ is also a finite-dimensional coradically graded Hopf algebra over $\k$, where $H_{(1)}(i)=H_{(1)}\cap H(i).$
Let $$\pi^\prime: H_{(1)}\rightarrow (H_{(1)})_0$$ be a natural Hopf algebra epimorphism with a retraction of the inclusion and  $$R^\prime=\{r\in H_{(1)}\mid (id\otimes \pi^\prime)\Delta(r)=r\otimes 1\}.$$
\begin{lemma}\label{lem:R=Rprime}
 With the notations above, we have $R^\prime=R_H$ and $H_{(1)}\cong R_H \times (H_{(1)})_0$.
\end{lemma}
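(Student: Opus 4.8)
The plan is to deduce both assertions from Radford's biproduct theorem applied \emph{directly to} $H_{(1)}$, the crux being that the projection $\pi^\prime$ defining $R^\prime$ is literally the restriction of $\pi$. First I would record a structural observation: because $H=\bigoplus_{i=0}^n H(i)$ is coradically graded, the canonical Hopf epimorphism $\pi\colon H\to H_0$ is nothing but the graded projection onto the degree-zero component $H(0)=H_0$, annihilating all strictly positive degrees. That this projection is a coalgebra (hence Hopf) map is checked directly from $\Delta(H(n))\subseteq\bigoplus_{i+j=n}H(i)\otimes H(j)$: for $h\in H(n)$ with $n>0$ the only potentially surviving component under $(\pi\otimes\pi)\Delta$ would sit in $H(0)\otimes H(0)$, which is excluded since $i+j=n>0$. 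The identical remark applies to $H_{(1)}=\bigoplus_{i=0}^n H_{(1)}(i)$, so $\pi^\prime\colon H_{(1)}\to (H_{(1)})_0$ is the graded projection onto $H_{(1)}(0)=(H_{(1)})_0$.

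Next I would establish the key identity $\pi^\prime=\pi|_{H_{(1)}}$. Since by hypothesis $H_{(1)}(i)=H_{(1)}\cap H(i)$, the homogeneous decomposition of any $r\in H_{(1)}$ computed inside $H_{(1)}$ agrees termwise with the one computed inside $H$; applying the degree-zero projection on either side returns the same element, whence $\pi^\prime(r)=\pi(r)$ for every $r\in H_{(1)}$.

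With this in hand the equality $R^\prime=R_H$ follows formally. For $r\in H_{(1)}$ one has $\Delta(r)\in H_{(1)}\otimes H_{(1)}$ because $H_{(1)}$ is a subcoalgebra, so the preceding step gives $(\id\otimes\pi^\prime)\Delta(r)=(\id\otimes\pi)\Delta(r)$; therefore the defining condition $(\id\otimes\pi^\prime)\Delta(r)=r\otimes 1$ for membership in $R^\prime$ coincides with the condition $(\id\otimes\pi)\Delta(r)=r\otimes 1$ for membership in $R_H$. This yields $R^\prime\subseteq R_H$ immediately, and conversely any $r\in R_H$ already lies in $H_{(1)}$ by Lemma \ref{lem:Rin1} and hence satisfies the $R^\prime$-condition, so $R^\prime=R_H$. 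Finally, for the isomorphism I would invoke \cite[Theorem 3]{Rad85} for the split pair consisting of $\pi^\prime$ and the inclusion $(H_{(1)})_0\hookrightarrow H_{(1)}$: Radford's theorem gives $H_{(1)}\cong R^\prime\times (H_{(1)})_0$, and substituting $R^\prime=R_H$ completes the proof.

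The only genuinely delicate point, and thus the main obstacle, is the verification $\pi^\prime=\pi|_{H_{(1)}}$: it rests on the compatibility $H_{(1)}(i)=H_{(1)}\cap H(i)$ of the two gradings together with the identification of each ``natural'' projection with the graded degree-zero projection. Once this bookkeeping is settled, the equality of the two coinvariant subalgebras and the biproduct decomposition are purely formal consequences of Lemma \ref{lem:Rin1} and Radford's theorem.
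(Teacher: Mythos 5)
Your proposal is correct and follows essentially the same route as the paper's proof: both rest on the identity $\pi^\prime=\pi|_{H_{(1)}}$ to get $R^\prime\subseteq R_H$, use Lemma \ref{lem:Rin1} for the reverse inclusion, and then conclude $H_{(1)}\cong R_H\times (H_{(1)})_0$ by \cite[Theorem 3]{Rad85}. The only difference is that you spell out the justification of $\pi^\prime=\pi|_{H_{(1)}}$ (via the graded projections and the compatibility $H_{(1)}(i)=H_{(1)}\cap H(i)$), which the paper simply asserts.
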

\begin{proof}
Since $\pi^\prime=\pi\mid_{H_{(1)}},$ we have $R^\prime\subseteq R_H.$ By Lemma \ref{lem:Rin1}, $R^\prime=R_H.$
The conclusion then follows directly from \cite[Theorem 3]{Rad85}.
\end{proof}

With the help of the preceding lemmas, we can now prove the following.
\begin{proposition}\label{HtameiffH0tame}
Let $H$ be a finite-dimensional coradically graded Hopf algebra over $\k$. Then $H$ is of tame corepresentation type if and only if $H_{(1)}$ is of tame corepresentation type.
\end{proposition}
\begin{proof}
The ``only if" implication follows immediately from Lemma \ref{lem:1tame}. For the ``if" implication, since $H_{(1)}$ is of tame corepresentation type, it follows from Theorem \ref{thm:tamestructure} that $$H_{(1)}\cong (\k\langle x,y\rangle/I)^*\times (H_{(1)})_0$$ for some $I$ listed in Theorem \ref{thm:tamestructure}. By Lemma \ref{lem:R=Rprime}, we have $$H\cong (\k\langle x,y\rangle/I)^*\times H_0,$$  and Theorem \ref{thm:tamestructure} mplies that $H$ is of tame corepresentation type.
\end{proof}
The above proposition shows that when studying finite-dimensional coradically graded Hopf algebras over $\k$ of tame corepresentation type, it suffices to consider the link-indecomposable component containing $\k 1$.

\section{Characterization of $R_H$}\label{section6}
In this section, we discuss which ideal in Theorem 5.2 occurs when $(\k \langle x,y\rangle / I)^{*} \times H_0$ is a finite-dimensional coradically graded Hopf algebra of tame corepresentation type.

Let $H$ be a finite-dimensional Hopf algebra with the dual Chevalley property. Denote the coradical filtration of $H$ by $\{H_n\}_{n\geq0}$ and the set of all simple subcoalgebras of $H$ by $\mathcal{S}$. There exists a natural Hopf algebra epimorphism $$\pi:\operatorname{gr}^c(H)\rightarrow H_0$$ with a retraction of the inclusion $$i:H_0\rightarrow \operatorname{gr}^c(H).$$
Denote $$R_{H}:=\{h\in \operatorname{gr}^c(H)\mid (id\otimes\pi)\Delta (h)=h\otimes 1\}.$$
We will give a more precise description of the structure of $R_H$.

First, recall some properties of the biproduct. Set $\Pi=id\ast (i\circ S\circ \pi)$, where $S$ is the antipode of $\operatorname{gr}^c(H)$ and $\ast$ denotes the convolution product. According to \cite[Theorem 3]{Rad85}, we know that $R_H=\Pi(\operatorname{gr}^c(H))$, and $R_H$ has a unique coalgebra structure such that $\Pi$ is a coalgebra map. Let $j: R_H\rightarrow \operatorname{gr}^c(H)$ be the inclusion. Then the map $$\eta: R_H\times H_0 \rightarrow \operatorname{gr}^c(H),\;\;\; r\times h\mapsto r j(h)$$ is an isomorphism of Hopf algebras.

Moreover, it follows from \cite[Theorem 2 (b)]{Rad85} that the following diagrams
\begin{displaymath}
\xymatrix{
&\ar[dl]^\Pi \operatorname{gr}^c(H)\ar[dr]^\pi  \\
R_H&& H_0\\
&\ar[ul]^{\Pi_{R_H}}R_H\times H_0\ar[uu]_{\eta}\ar[ur]^{\pi_{H_0}}
}\;\;\;\;\;\;
\xymatrix{
& \operatorname{gr}^c(H)  \\
R_H\ar[ur]^j\ar[dr]_{j_R}&& \ar[ul]_i\ar[dl]^{i_{H_0}} H_0\\
&R_H\times H_0\ar[uu]_{\eta}
}
\end{displaymath}
commute, where
\begin{eqnarray*}
\Pi_{R_H}&:&r\times h\mapsto r\varepsilon(h),\\
j_{R_H}&:&r\mapsto r\times 1,\\
i_{H_0}&:&h\mapsto1\times h,\\
\pi_{H_0}&:& r\times h\mapsto \varepsilon(r)h,
\end{eqnarray*}
for all $h\in H_0, r\in R_H.$
\begin{lemma}\label{lem:deltaRH}
For any $r\in R_H$,  $\Delta_{R_H}(r)=((\Pi_R\circ\eta^{-1}) \otimes id)\Delta(r),$ where $\Delta$ and $\Delta_{R_H}$ are the comultiplications of $H$ and $R_H$, respectively.
\end{lemma}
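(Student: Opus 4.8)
The plan is to verify the asserted identity directly on $R_H$. Since the left triangle of the first commutative diagram reads $\Pi\circ\eta=\Pi_{R_H}$, equivalently $\Pi=\Pi_{R_H}\circ\eta^{-1}$, the claimed formula is exactly
$$\Delta_{R_H}(r)=(\Pi\otimes\id)\Delta(r),\qquad r\in R_H,$$
so it suffices to prove this form. I will lean on two facts about the projection $\Pi=\id\ast(i\circ S\circ\pi)$ already recorded above: that $R_H=\Pi(\operatorname{gr}^c(H))$ carries the \emph{unique} coalgebra structure making $\Pi$ a coalgebra map, and the explicit formula $\Pi(h)=\sum h_{(1)}\,i(S(\pi(h_{(2)})))$.

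First I would record that $\Pi$ restricts to the identity on $R_H$: for $r\in R_H$ the coinvariance relation $\sum r_{(1)}\otimes\pi(r_{(2)})=r\otimes 1$, after applying $\id\otimes(i\circ S)$ and multiplying, gives $\Pi(r)=r$, using $i(S(1))=1$. Feeding this into the coalgebra-map identity $\Delta_{R_H}\circ\Pi=(\Pi\otimes\Pi)\circ\Delta$ yields, for every $r\in R_H$,
$$\Delta_{R_H}(r)=\Delta_{R_H}(\Pi(r))=(\Pi\otimes\Pi)\Delta(r).$$
Comparing with the target, the whole problem collapses to replacing the second $\Pi$ by $\id$, i.e. to showing that the second Sweedler leg of $\Delta(r)$ already lies in $R_H$; equivalently, that $R_H$ is a right coideal, $\Delta(R_H)\subseteq\operatorname{gr}^c(H)\otimes R_H$.

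The hard part will be this coideal containment, since a blind Sweedler manipulation from coinvariance only reproduces $\pi(r)=\varepsilon(r)1$ and never separates the two legs. The route I would take instead is to compute $\Delta(\Pi(h))$ for arbitrary $h$ from the explicit formula for $\Pi$, together with the identity $\Delta(S(a))=\sum S(a_{(2)})\otimes S(a_{(1)})$ and the fact that $\pi$ and $i$ are Hopf maps; regrouping by coassociativity with the three-fold coproduct $\Delta^{(2)}(h)=\sum h_{(1)}\otimes h_{(2)}\otimes h_{(3)}$ should give
$$\Delta(\Pi(h))=\sum h_{(1)}\,i(S(\pi(h_{(3)})))\otimes\Pi(h_{(2)}),$$
whose second tensor factor is $\Pi(h_{(2)})\in R_H$. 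Because $R_H=\Pi(\operatorname{gr}^c(H))$, this establishes $\Delta(R_H)\subseteq\operatorname{gr}^c(H)\otimes R_H$, whence $(\Pi\otimes\Pi)\Delta(r)=(\Pi\otimes\id)\Delta(r)$ for $r\in R_H$, and the chain of equalities closes the proof.

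As an alternative I could bypass the coideal lemma by transporting the computation through the coalgebra isomorphism $\eta$: the second diagram gives $r=\eta(r\times 1)$, so $\Delta(r)=(\eta\otimes\eta)\Delta_{R_H\times H_0}(r\times 1)$; expanding the smash coproduct of the biproduct and then applying $\Pi\otimes\id=(\Pi_{R_H}\circ\eta^{-1})\otimes\id$ makes $\Pi_{R_H}$ delete the $H_0$-factor through its counit, while the comodule counit axiom collapses the result back to $\Delta_{R_H}(r)$. This route trades the coideal lemma for explicit knowledge of the smash-coproduct formula of the biproduct.
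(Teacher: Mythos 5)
Your proposal is correct, but it reaches the key identity by a genuinely different route than the paper. The paper's proof consists of two moves: it \emph{cites} the proof of \cite[Theorem 3]{Rad85} for the identity $\Delta_{R_H}(r)=(\Pi\otimes \id)\Delta(r)$ on $R_H$, and then converts $\Pi$ into $\Pi_{R_H}\circ\eta^{-1}$ by transporting through $\eta$ (the chain $\Delta(r)=(\eta\otimes\eta)\Delta^{\prime}(r\times 1)$ together with $\Pi\circ\eta=\Pi_{R_H}$) — precisely the identification you dispatch in one line at the start. What you do differently is to \emph{prove} Radford's identity rather than cite it, using only the facts recorded in the section: that $\Pi|_{R_H}=\id$ (your coinvariance argument with $i(S(1))=1$ is correct), that $\Pi$ is a coalgebra map onto $R_H$, giving $\Delta_{R_H}(r)=(\Pi\otimes\Pi)\Delta(r)$, and the containment $\Delta(R_H)\subseteq\operatorname{gr}^c(H)\otimes R_H$, which your Sweedler computation $\Delta(\Pi(h))=\sum h_{(1)}\,i(S(\pi(h_{(3)})))\otimes\Pi(h_{(2)})$ establishes correctly (since $R_H=\Pi(\operatorname{gr}^c(H))$); combining these three facts does close the argument. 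One terminological quibble: the containment $\Delta(R_H)\subseteq\operatorname{gr}^c(H)\otimes R_H$ is what is conventionally called a \emph{left} coideal, not a right one, though the containment itself — which is all you use — is the right one to prove. Your approach buys self-containedness: it needs nothing from the internals of \cite{Rad85} beyond what the paper has already displayed, and it records along the way the useful structural facts $\Pi|_{R_H}=\id$ and the coideal property. The paper's approach buys brevity, at the cost of appealing to the \emph{proof} (not merely the statement) of an external theorem. Your closing ``alternative'' route — expanding the smash coproduct of the biproduct and collapsing with the counit — is yet a third option, more computational than the paper's, which never unpacks $\Delta^{\prime}$ explicitly.
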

\begin{proof}
From the proof of \cite[Theorem 3]{Rad85}, we have $$\Delta_{R_H}(r)=(\Pi\otimes id)\Delta(r).$$
Then
\begin{eqnarray*}
\Delta_{R_H}(r)&=&(\Pi\otimes id)\Delta(r)\\
&=&(\Pi\otimes id)\Delta(\eta(r\times 1))\\
&=&(\Pi\otimes id)(\eta\otimes\eta)\Delta^{\prime}(r\times 1)\\
&=&(\Pi_{R_H} \otimes \eta)\Delta^\prime(r\times 1)\\
&=&(\Pi_{R_H} \otimes \eta)(\eta^{-1}\otimes \eta^{-1})\Delta(r)\\
&=&((\Pi_{R_H}\circ\eta^{-1}) \otimes id)\Delta(r),
\end{eqnarray*}
where $\Delta^{\prime}$ is the comultiplication of ${R_H}\times H_0$.
\end{proof}

As stated in the previous section, $\operatorname{gr}^a(H^*)$ is a finite-dimensional radically graded Hopf algebra over $\k$.
There exists a natural Hopf algebra epimorphism $$\tau:\operatorname{gr}^a(H^*)\rightarrow H^*/J_{H^*}$$ with a retraction of the inclusion, where $J_{H^*}$ is the radical of $H^*$. Furthermore, we have $$\operatorname{gr}^a(H^*)\cong A_{H^*}\times H^*/J_{H^*},$$
where $$A_{H^*}:=\{h\in \operatorname{gr}^a(H^*)\mid (id\otimes\tau)\Delta (h)=h\otimes 1\}.$$
\begin{lemma}\label{lem:A=R*ascoalgebra}
With the notations above, we have $R_H\cong (A_{H^*})^*$ as coalgebras.
\end{lemma}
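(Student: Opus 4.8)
The plan is to use the nondegenerate Hopf pairing underlying the isomorphism $\operatorname{gr}^c(H)\cong(\operatorname{gr}^a(H^*))^*$ established in the proof of Theorem \ref{thm:tamestructure}, and to show that it restricts to a nondegenerate pairing $R_H\times A_{H^*}\to\k$ under which the comultiplication of $R_H$ computed in Lemma \ref{lem:deltaRH} is dual to the multiplication of $A_{H^*}$.

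First I would fix the pairing $\langle-,-\rangle\colon \operatorname{gr}^c(H)\times\operatorname{gr}^a(H^*)\to\k$ realizing $\operatorname{gr}^c(H)\cong(\operatorname{gr}^a(H^*))^*$; it is a Hopf pairing, so it turns the product on one side into the coproduct on the other and intertwines the two antipodes. Under the compatible gradings, the coradical $H_0$ pairs nondegenerately with the degree-zero part $H^*/J_{H^*}$ and is orthogonal to all higher-degree components. Consequently the section $i\colon H_0\to\operatorname{gr}^c(H)$ and the projection $\pi\colon\operatorname{gr}^c(H)\to H_0$ are, after the identification $H_0^*\cong H^*/J_{H^*}$, the transposes of the projection $\tau$ and of the degree-zero inclusion of $\operatorname{gr}^a(H^*)$, respectively. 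Since the projection onto $R_H$ is $\Pi=\id\ast (i\circ S\circ\pi)$ and the analogous projection onto $A_{H^*}$ is built from the corresponding structure maps of $\operatorname{gr}^a(H^*)$, the compatibility of the pairing with products, coproducts and antipodes should yield $\Pi_A=\Pi^{*}$, the transpose of $\Pi$.

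Granting $\Pi_A=\Pi^{*}$, the image $A_{H^*}=\Pi_A(\operatorname{gr}^a(H^*))=\operatorname{im}(\Pi^{*})=(\ker\Pi)^{\perp}$. Because $\operatorname{gr}^c(H)=R_H\oplus\ker\Pi$ with $R_H=\operatorname{im}\Pi$, global nondegeneracy forces the restricted pairing $R_H\times A_{H^*}\to\k$ to be nondegenerate and identifies $A_{H^*}\cong(\operatorname{gr}^c(H)/\ker\Pi)^{*}\cong R_H^{*}$ as vector spaces, hence $R_H\cong(A_{H^*})^{*}$. To promote this to a coalgebra isomorphism I would check that the pairing intertwines the comultiplication of $R_H$ with the multiplication of $A_{H^*}$: using $\Delta_{R_H}=(\Pi\otimes\id)\Delta$ from Lemma \ref{lem:deltaRH} together with $\Pi_A=\Pi^{*}$ and the fact that $\langle-,-\rangle$ sends the coproduct of $\operatorname{gr}^c(H)$ to the product of $\operatorname{gr}^a(H^*)$, one obtains $\langle\Delta_{R_H}(r),a\otimes a'\rangle=\langle r, aa'\rangle$ and $\langle r,1\rangle=\varepsilon(r)$ for $r\in R_H$ and $a,a'\in A_{H^*}$; this says precisely that $r\mapsto\langle r,-\rangle|_{A_{H^*}}$ is a coalgebra map $R_H\to(A_{H^*})^{*}$, which by nondegeneracy is the desired isomorphism.

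The main obstacle is the identity $\Pi_A=\Pi^{*}$: one must verify that under the chain of dualities the canonical projection $\pi$ onto the coradical really is the transpose of the degree-zero inclusion $H^*/J_{H^*}\hookrightarrow\operatorname{gr}^a(H^*)$ (equivalently $i^{*}=\tau$ after identifying $H_0^{*}\cong H^*/J_{H^*}$), and that the convolution-and-antipode expression for $\Pi$ transposes to the one for $\Pi_A$. A more structural route avoids this computation by dualizing the bosonization directly: combining $\operatorname{gr}^a(H^*)\cong A_{H^*}\times(H^*/J_{H^*})$ with the duality of biproducts $(A\times B)^{*}\cong A^{*}\times B^{*}$ from \cite[Theorem 5.1]{Mol77} gives $\operatorname{gr}^c(H)\cong(A_{H^*})^{*}\times H_0$, and one then argues that this Hopf isomorphism respects the projections onto $H_0$, so that it carries $R_H$ isomorphically onto $(A_{H^*})^{*}$; the same compatibility of projections is the crux in this route as well.
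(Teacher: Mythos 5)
Your proposal is correct, and in fact it already contains the paper's proof as its closing remark: the paper's entire argument is the ``more structural route'' you sketch at the end. The paper simply combines $\operatorname{gr}^c(H)\cong(\operatorname{gr}^a(H^*))^*$ with \cite[Theorem 5.1]{Mol77} to obtain $R_H\times H_0\cong(A_{H^*}\times H^*/J_{H^*})^*\cong(A_{H^*})^*\times H_0$, and then invokes \cite[Theorem 3]{Rad85} to strip off the $H_0$ factor and conclude $R_H\cong(A_{H^*})^*$ as coalgebras. Your primary route is genuinely different and is also sound: since $J_{H^*}^{\,n+1}=H_n^{\perp}$, the evaluation pairing $\operatorname{gr}^c(H)\times\operatorname{gr}^a(H^*)\to\k$ is a graded Hopf pairing, so $i^*=\tau$ and $\pi^*$ is the degree-zero inclusion $\iota$; then $\langle\Pi(h),x\rangle=\sum\langle h_1,x_1\rangle\langle h_2,(\iota\circ S\circ\tau)(x_2)\rangle=\langle h,\Pi_A(x)\rangle$ gives your key identity $\Pi_A=\Pi^*$, idempotency of $\Pi$ gives $\operatorname{gr}^c(H)=R_H\oplus\ker\Pi$ so the restricted pairing $R_H\times A_{H^*}\to\k$ is nondegenerate, and $\Delta_{R_H}=(\Pi\otimes\id)\Delta$ together with $\Pi_A|_{A_{H^*}}=\id$ yields $\langle\Delta_{R_H}(r),a\otimes a'\rangle=\langle r,aa'\rangle$. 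Comparing the two: the paper's proof is three lines but leaves implicit exactly what you identify as the crux, namely that the composite isomorphism $R_H\times H_0\cong(A_{H^*})^*\times H_0$ is compatible with the projections onto $H_0$, which is what licenses cancelling the $H_0$ factor via Radford's theorem; your pairing argument makes this compatibility explicit (it is precisely $\Pi_A=\Pi^*$) and delivers the coalgebra isomorphism by direct verification, at the cost of setting up the graded duality between the coradical filtration of $H$ and the radical filtration of $H^*$.
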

\begin{proof}
We have
$$\operatorname{gr}^c(H)\cong (\operatorname{gr}^a(H^*))^*$$ as Hopf algebra. It follows from \cite[Theorem 5.1]{Mol77} that
$$R_H \times H_0\cong (A_{H^*}\times H^*/J_{H^*})^*\cong (A_{H*})^*\times H_0.$$
According to \cite[Theorem 3]{Rad85}, we know that $$R_H\cong (A_{H^*})^*$$ as coalgebras.
\end{proof}
In the following part, let $\operatorname{gr}^c(H)$ be a finite-dimensional Hopf algebra of tame corepresentation type.
Combining Lemma \ref{lemma:P^1=1^P} and Theorem \ref{thm:corepandquiver}, we know that one of the following three cases occurs:
   \begin{itemize}
  \item[(i)]$\mid\mathcal{P}{}^1\mid=1$ and $\mathcal{S}{}^1=\{C\}$ for some $C\in\mathcal{S}$ with $\dim_{\k}(C)=4$;
  \item[(ii)]$\mid\mathcal{P}{}^1\mid=2$ and $\mathcal{S}{}^1=\{\k g\}$ for some $g\in G(H)$;
  \item[(iii)]$\mid\mathcal{P}{}^1\mid=2$ and $\mathcal{S}{}^1=\{\k g, \k h\}$ for some $g, h\in G(H)$.
  \end{itemize}
  We need to determine which ideal in Theorem \ref{thm:tamestructure} makes $R_H\cong (\k\langle x,y\rangle/I)^*$ as coalgebras in each of these three cases.
We discuss them separately.

\subsection{Cases (i)}
Suppose $\mathcal{P}{}^1=\{\X\}$ and $\mathcal{S}{}^1=\{C\}$, where $$
\X=\left(\begin{array}{cc}
u\\
v
 \end{array}\right)
$$
 and $C$ is a $4$-dimensional simple subcoalgebra with basic multiplicative matrix
$$\C=\left(\begin{array}{cc}
c_{11}&c_{12}\\
c_{21}&c_{22}
 \end{array}\right).$$
By the definition of primitive matrix, we have
\begin{eqnarray*}
\Delta(u)&=&c_{11}\otimes u+c_{12}\otimes v+u\otimes 1,\\
\Delta(v)&=&c_{21}\otimes u+c_{22}\otimes v+v\otimes 1.
\end{eqnarray*}
It is clear that the subalgebra $U$ of $ \operatorname{gr}^c(H)$ generated by $u, v$ is contained in $R_{H}.$ We need to compute $\Delta_{R_H}(r)$ for any $r\in U.$

Before proceeding further, let us give the following lemma.
\begin{lemma}\label{lem:invertibleK}
With the notations above,
the set $\{c_{ij}u\mid 1\leq i,j\leq 2\}\cup\{c_{ij}v\mid 1\leq i,j\leq 2\}$ is linearly independent in $\operatorname{gr}^c(H)$. Moreover, there exists an invertible matrix $K=(k_{ij})_{4\times 4}$ over $\k$ such that
$
\C\odot^\prime\X=K(\X\odot\C),
$
namely,
$$
\left(\begin{array}{cccc}
c_{11}u&c_{12}u\\
c_{21}u&c_{22}u\\
c_{11}v&c_{12}v\\
c_{21}v&c_{22}v
 \end{array}\right)=
\left(\begin{array}{cccc}
k_{11}&k_{12}&k_{13}&k_{14}\\
k_{21}&k_{22}&k_{23}&k_{24}\\
k_{31}&k_{32}&k_{33}&k_{34}\\
k_{41}&k_{42}&k_{43}&k_{44}
 \end{array}\right)
 \left(\begin{array}{cccc}
uc_{11}& uc_{12}\\
uc_{21}&uc_{22}\\
vc_{11}&vc_{12}\\
vc_{21}&vc_{22}
 \end{array}\right).
$$
\end{lemma}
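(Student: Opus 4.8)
My plan is to handle the two assertions with two different tools: the block--decomposition machinery of Subsection \ref{subsection1.2} for the linear independence, and the Radford biproduct description $\operatorname{gr}^c(H)\cong R_H\times H_0$ for the matrix identity. For the linear independence the key observation is that $\widetilde{\C}:=\left(\begin{smallmatrix}\C&\X\\0&1\end{smallmatrix}\right)$ is a multiplicative matrix, since $\X=\binom{u}{v}$ is $(\C,1)$-primitive. Applying the decomposition (\ref{equation:BX}) with $\B=\C$ to $\widetilde{\C}$, the relevant off-diagonal block is exactly $\C\odot'\X$, whose entries are the eight elements $c_{ij}u,c_{ij}v$; conjugating by $\operatorname{diag}(L_{\C,\C},L_{\C,1})$ with $L_{\C,1}=I$ turns this block into a stack of matrices $\X_{i1}$ which, by Lemma \ref{Lemma:CXno0}, are non-trivial $(\E_i,\C)$-primitive, where $\E_1,\dots,\E_{u_{(\C,\C)}}$ are the simple components of $\C\odot'\C$. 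Since $\dim_\k\span(\overline{\X_{i1}})=2\,r_{\E_i}$ and $\sum_i r_{\E_i}=4$ (the row-size of $\C\odot'\C$), and since the spans $\span(\overline{\X_{i1}})$ assemble into a direct sum by the completeness built into this construction (Lemma \ref{coro:BXcomplete}, via Lemma \ref{lemma:sum2}), I obtain $\dim_\k\span(\overline{\C\odot'\X})=8$. As the conjugation is an invertible scalar change of basis and degree-one elements meet $H_0$ trivially, this gives the asserted linear independence of $\{c_{ij}u,c_{ij}v\}$.

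For the identity I would compute inside the smash product $R_H\#H_0$ afforded by $\operatorname{gr}^c(H)\cong R_H\times H_0$. The hypothesis $\mathcal{P}{}^1=\{\X\}$ identifies the degree-one part $R_H(1)$ with the right $H_0$-coinvariants of the degree-one component $H_1/H_0$, namely $\bigoplus_{\Y\in\mathcal{P}{}^1}\span(\overline{\Y})=\span\{u,v\}$, so $R_H(1)$ is exactly two-dimensional. Because the left Yetter--Drinfeld action $\rightharpoonup$ of $H_0$ on $R_H$ preserves the grading, each $c_{it}\rightharpoonup u$ and $c_{it}\rightharpoonup v$ lies in $\span\{u,v\}$. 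Using $\Delta(c_{ij})=\sum_t c_{it}\otimes c_{tj}$ together with the smash-product rule, I get $c_{ij}u=(1\#c_{ij})(u\#1)=\sum_t(c_{it}\rightharpoonup u)\,c_{tj}$, and likewise for $c_{ij}v$. Expanding $c_{it}\rightharpoonup u$ and $c_{it}\rightharpoonup v$ in the basis $\{u,v\}$ produces scalar coefficients that depend only on $i,t$ and not on $j$; collecting them into a single $4\times4$ matrix $K$ over $\k$ yields precisely $\C\odot'\X=K(\X\odot\C)$. The point making one matrix $K$ serve both columns is that these coefficients come from the $H_0$-action, which is blind to the column index $j$.

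Finally $K$ is invertible: if $\mathbf{p}K=0$ for a nonzero row vector $\mathbf{p}$ over $\k$, then $\mathbf{p}(\C\odot'\X)=\mathbf{p}K(\X\odot\C)=0$ would be a non-trivial linear relation among the $c_{ij}u,c_{ij}v$, contradicting the first part. I expect the genuine difficulty to sit in the first two steps rather than the last. In the first, one must be sure the blocks $\X_{i1}$ contribute a direct sum of total dimension exactly $8$ and not less, which is where the completeness of the families produced by (\ref{equation:BX}) is essential; in the second, one must pin down that $R_H(1)$ equals $\span\{u,v\}$ so that the action cannot leave this plane, and invoke the biproduct conventions so that the resulting coefficients are honestly independent of $j$ and match the prescribed block shape of $K$.
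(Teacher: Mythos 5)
Your derivation of the identity $\C\odot^\prime\X=K(\X\odot\C)$ (the second half of the lemma) is correct and takes a genuinely different route from the paper. The paper conjugates both $\C\odot^\prime\left(\begin{smallmatrix}\C&\X\\0&1\end{smallmatrix}\right)$ and $\left(\begin{smallmatrix}\C&\X\\0&1\end{smallmatrix}\right)\odot\C$ into block-triangular forms with the same diagonal $\D_1,\ldots,\D_u$ (using the permutation $J$ with $J(\C\odot^\prime\C)J^{-1}=\C\odot\C$), then runs a case analysis over the possible decompositions of $C\cdot C$ (a $16$-dimensional simple, $\k g+E^{(3)}$, two distinct $4$-dimensional simples, $2E^{(2)}$, four group-likes), citing \cite[Proposition 2.15, Corollary 2.16]{YLL23} in each case to produce an invertible $P_i$ with $P_i\bigl(L(\C\odot^\prime\X)\bigr)=LJ^{-1}(\X\odot\C)$, and finally assembles $K$ from $L$, $J$ and $P_i$. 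That route gives, beyond existence, the possible shapes of $K$, which is what motivates the hypothesis ``$K$ diagonal'' in the later results. Your smash-product computation $c_{ij}u=\sum_t(c_{it}\rightharpoonup u)c_{tj}$, combined with $R_H(1)=\span\{u,v\}$ (which you correctly deduce from $\mathcal{P}^1=\{\X\}$ and the decomposition $H_1/H_0=\bigoplus_{\Z\in\mathcal{P}}\span(\overline{\Z})$ of Lemma \ref{coro:BXcomplete}, the right coinvariants of $\span(\overline{\Z})$ vanishing unless $\Z\in\mathcal{P}^1$) and with the fact that the adjoint action of the degree-zero part preserves the grading of $R_H$, is shorter, explains structurally why a single $K$ serves both columns, and identifies $K$ as the matrix of the $H_0$-action on $R_H(1)$; what it does not recover is the paper's list of possible forms of $K$.

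There is, however, a genuine gap in your linear-independence argument, and since your invertibility proof for $K$ is deduced from that independence, the gap infects it too. You claim the spans $\span(\overline{\X_{i1}})$ are in direct sum ``by the completeness built into this construction (Lemma \ref{coro:BXcomplete}, via Lemma \ref{lemma:sum2})''. But Lemma \ref{lemma:sum2} only separates components attached to \emph{distinct} pairs of simple subcoalgebras, and Lemma \ref{coro:BXcomplete} concerns the family $\mathcal{P}$ of Subsection \ref{subsection1.2}, whose members are blocks of $\B\odot^\prime\left(\begin{smallmatrix}1&\Y\\0&\C\end{smallmatrix}\right)$ for $\Y\in{}^1\mathcal{P}$ a $(1,\C)$-primitive matrix; your blocks $\X_{i1}$ arise from the mirror construction $\C\odot^\prime\left(\begin{smallmatrix}\C&\X\\0&1\end{smallmatrix}\right)$ with $\X\in\mathcal{P}^1$, and are not elements of $\mathcal{P}$, so neither lemma applies to them. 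The danger is concrete precisely when $C\cdot C$ has a repeated simple factor, i.e. $C\cdot C=2E^{(2)}$ (case (4) of the paper's proof): then $\X_{11}$ and $\X_{21}$ are simple sub-bicomodules of a single isotypic component ${}^{E^{(2)}}(H_1/H_0)^{C}$, and two such sub-bicomodules are either in direct sum or \emph{equal}; your citations do not exclude equality, which would leave the eight elements spanning only four dimensions. (When the $E_i$ are pairwise distinct, Lemma \ref{lemma:sum2} does suffice.) The paper closes exactly this hole by invoking \cite[Corollary 2.6 and Lemma 3.5]{YLL23}.

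The gap is fillable, and in fact your own biproduct technique fills it more cleanly than the quiver machinery. The entries $uc_{ij},vc_{ij}$ of $\X\odot\C$ are linearly independent outright, because the multiplication map $R_H\otimes H_0\to\operatorname{gr}^c(H)$, $r\otimes h\mapsto rh$, is a linear isomorphism by \cite[Theorem 3]{Rad85} and $u,v$ are linearly independent in $R_H$. The mirror identity $rh=\sum h_{(2)}\bigl(S^{-1}(h_{(1)})\rightharpoonup r\bigr)$, applied with $h=c_{ij}$ and $r\in\{u,v\}$, yields a scalar matrix $K^\prime$ with $\X\odot\C=K^\prime(\C\odot^\prime\X)$; combined with your identity this gives $K^\prime K=I$, so $K$ is invertible, and the linear independence of $\{c_{ij}u,c_{ij}v\}$ then follows from that of $\{uc_{ij},vc_{ij}\}$. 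Alternatively, staying inside the quiver approach, one can argue by Schur: if $\span(\overline{\X_{11}})=\span(\overline{\X_{21}})$, the entrywise correspondence $\overline{(\X_{11})_{kl}}\mapsto\overline{(\X_{21})_{kl}}$ is an automorphism of a simple bicomodule, hence a scalar $\lambda$, and replacing $L_{\C,\C}$ by $\left(\begin{smallmatrix}I&0\\-\lambda I&I\end{smallmatrix}\right)L_{\C,\C}$ (still a valid block-diagonalization, as the two diagonal blocks coincide) produces a trivial block, contradicting Lemma \ref{Lemma:CXno0}. Either repair makes your proof complete.
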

\begin{proof}
By \cite[Proposition 2.6]{Li22a}, there exists an invertible matrix $L$ over $\k$ such that
\begin{eqnarray*}
\left(\begin{array}{cc}
L\\
&I
 \end{array}\right)
 (\C\odot^{\prime}
\left(\begin{array}{cc}
\C&\X\\
&1
 \end{array}\right) )
 \left(\begin{array}{cc}
L^{-1}\\
&I
 \end{array}\right)
 =
 \left(\begin{array}{ccccc}
\D_1&&&\\
&\D_2&&&L(\C\odot^\prime\X)\\
&&\ddots\\
&&&\D_u\\
&&&&\C
 \end{array}\right),
\end{eqnarray*}
where $\D_1, \cdots, \D_u$ are the given basic multiplicative matrices.
Using \cite[Corollary 2.6 and Lemma 3.5]{YLL23}, we can show that the set $\{c_{ij}u\mid 1\leq i,j\leq 2\}\cup\{c_{ij}v\mid 1\leq i,j\leq 2\}$ is linearly independent in $\operatorname{gr}^c(H)$.
Let $$J=\left(\begin{array}{cccc}
1&0&0&0\\
0&0&1&0\\
0&1&0&0\\
0&0&0&1
 \end{array}\right)$$ be an invertible matrix over $\k$, we know that
 $$J(\C\odot^\prime\C)J^{-1}=\C\odot \C.$$
 It follows that
 \begin{eqnarray*}
\left(\begin{array}{cc}
LJ^{-1}\\
&I
 \end{array}\right)
(\left(\begin{array}{cc}
\C&\X\\
&1
 \end{array}\right) \odot\C)
 \left(\begin{array}{cc}
JL^{-1}\\
&I
 \end{array}\right)
 =
 \left(\begin{array}{ccccc}
\D_1&&&\\
&\D_2&&&LJ^{-1}(\X\odot\C)\\
&&\ddots\\
&&&\D_u\\
&&&&\C
 \end{array}\right).
\end{eqnarray*}
\begin{itemize}
  \item[(1)]
  Suppose $$C\cdot C=E^{(4)},$$ where $E^{(4)}\in\mathcal{S}$ is a $16$-dimensional simple subcoalgebra. We know that both $L(\C\odot^\prime\X)$ and $LJ^{-1}(\X\odot\C)$ are non-trivial $(\E^{(4)}, \C)$-primitive matrices, where $\E^{(4)}\in\mathcal{M}$ is the basic multiplicative matrix of $E^{(4)}$. From \cite[Corollary 2.16]{YLL23}, there exists an invertible matrix $P_1=\alpha I$ over $\k$ such that
  $$P_1(L(\C\odot^\prime\X))=LJ^{-1}(\X\odot\C).$$
  \item[(2)]Suppose $$C\cdot C=\k g+E^{(3)}$$ for some group-like element $g\in G(H)$ and some $9$-dimensional simple subcoalgebra $E^{(3)}\in\mathcal{S}$.
  According to \cite[Corollary 2.16]{YLL23}, there exists an invertible matrix
  $$P_2=\left(\begin{array}{cccc}
\alpha_1\\
&\alpha_2\\
&&\alpha_2\\
&&&\alpha_2
 \end{array}\right)$$ over $\k$ such that
  $$P_2(L(\C\odot^\prime\X))=LJ^{-1}(\X\odot\C).$$
  \item[(3)]Suppose $$C\cdot C=E_1^{(2)}+E_2^{(2)}$$ for some $4$-dimensional simple subcoalgebras $E_1^{(2)}, E_2^{(2)}\in\mathcal{S}$ and $E_1^{(2)}\neq E_2^{(2)}$. Using \cite[Corollary 2.16]{YLL23}, we obtain an invertible matrix
  $$P_3=\left(\begin{array}{cccc}
\alpha_1\\
&\alpha_1\\
&&\alpha_2\\
&&&\alpha_2
 \end{array}\right)$$ over $\k$ such that
  $$P_3(L(\C\odot^\prime\X))=LJ^{-1}(\X\odot\C).$$
  \item[(4)]
  Suppose $$C\cdot C=2E^{(2)}$$ for some $4$-dimensional simple subcoalgebra $E^{(2)}\in\mathcal{S}$. It follows from \cite[Proposition 2.15]{YLL23} that there exists an invertible matrix
  $$P_4=\left(\begin{array}{cccc}
\alpha_1&&\alpha_2&\\
&\alpha_1&&\alpha_2\\
\alpha_3&&\alpha_4&\\
&\alpha_3&&\alpha_4
 \end{array}\right)$$ over $\k$ such that
  $$P_4(L(\C\odot^\prime\X))=LJ^{-1}(\X\odot\C).$$
  \item[(5)]Suppose $$C\cdot C=\k g_1+\k g_2+\k g_3+\k g_4$$ for some group-like elements $g_1, g_2, g_3, g_4\in G(H)$. Note that $g_1, g_2, g_3, g_4$ are different with each other, otherwise the link quiver of $\operatorname{gr}^c(H)$ is not a Euclid diagram.
        By \cite[Corollary 2.16]{YLL23}, there exists an invertible matrix
  $$P_5=\left(\begin{array}{cccc}
\alpha_1\\
&\alpha_2\\
&&\alpha_3\\
&&&\alpha_4
 \end{array}\right)$$ over $\k$ such that
  $$P_5(L(\C\odot^\prime\X))=LJ^{-1}(\X\odot\C).$$
\end{itemize}
Based on the above argument, there exists some $1\leq i\leq 5$ such that invertible matrix $K=L^{-1}P_iLJ^{-1}$ over $\k$ satisfying $
\C\odot^\prime\X=K(\X\odot\C).
$
\end{proof}
In fact, for any $r\in U,$ $\Delta_{R_H}(r)$ is determined by the invertible matrix $K$ in Lemma \ref{lem:invertibleK}.
Next we consider case (i) under the assumption that $K$ is a diagonal matrix.
\begin{lemma}\label{lemma:InoI1}
Let $\operatorname{gr}^c(H)\cong (\k\langle x,y\rangle/I)^* \times H_0$ be a finite-dimensional coradically graded Hopf algebra over $\k$ of tame corepresentation type.
If $\mathcal{P}{}^1=\{\X\}$, $\mathcal{S}{}^1=\{C\}$ for some $C\in\mathcal{S}$ with $\dim_{\k}(C)=4$ and the invertible matrix $K$ in Lemma \ref{lem:invertibleK} is diagonal, namely
$$
K= \left(\begin{array}{cccc}
\alpha_1\\
&\alpha_2\\
&&\alpha_3\\
&&&\alpha_4
 \end{array}\right).
$$
Then $I\neq (x^2-y^2, yx-ax^2, xy)$, where $0\neq a\in\k$.
\end{lemma}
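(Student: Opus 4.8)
The plan is to read off the multiplicative structure forced by the diagonality of $K$, transport it to $R_H$ via Lemma \ref{lem:deltaRH}, and then contradict the degree-two comultiplication dictated by form (1). First I would unpack $\C\odot^\prime\X = K(\X\odot\C)$ with $K=\mathrm{diag}(\alpha_1,\alpha_2,\alpha_3,\alpha_4)$ into the commutation rules $c_{1j}u=\alpha_1 uc_{1j}$, $c_{2j}u=\alpha_2 uc_{2j}$, $c_{1j}v=\alpha_3 vc_{1j}$ and $c_{2j}v=\alpha_4 vc_{2j}$ for $j=1,2$. Since $u,v\in R_H$ are primitive and $\{u,v\}$ spans $R_H(1)$ (as $\mathcal{P}{}^1=\{\X\}$), I would compute the comultiplications of the four degree-two monomials by writing $\Delta_{R_H}(r)=((\Pi_{R_H}\circ\eta^{-1})\otimes\id)\Delta(r)$, applying $\Delta$ as an algebra map in $\operatorname{gr}^c(H)$, moving each $c_{ij}$ past $u,v$ by the rules above, and using that $\Pi_{R_H}$ sends $rc_{ij}$ to $\delta_{ij}r$. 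This gives
\begin{align*}
\Delta_{R_H}(u^2) &= u^2\otimes1 + 1\otimes u^2 + (1+\alpha_1)\,u\otimes u,\\
\Delta_{R_H}(v^2) &= v^2\otimes1 + 1\otimes v^2 + (1+\alpha_4)\,v\otimes v,\\
\Delta_{R_H}(uv) &= uv\otimes1 + 1\otimes uv + u\otimes v + \alpha_3\,v\otimes u,\\
\Delta_{R_H}(vu) &= vu\otimes1 + 1\otimes vu + v\otimes u + \alpha_2\,u\otimes v.
\end{align*}

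Next I would invoke the hypothesis $I=(x^2-y^2, yx-ax^2, xy)$. By Lemma \ref{lem:A=R*ascoalgebra}, $R_H\cong(\k\langle x,y\rangle/I)^*$ as coalgebras, and a basis count in $\k\langle x,y\rangle/I$ (basis $1,x,y,x^2$) shows its degree-two part is one-dimensional, so $\dim_\k R_H(2)=1$. Because $R_H$ is coradically graded and connected, the reduced comultiplication is injective on $R_H(2)$, whence the four tensors displayed above must span a one-dimensional subspace of $R_H(1)\otimes R_H(1)$. Inspecting them, this forces $\alpha_1=\alpha_4=-1$ together with the vanishing of the minor $1-\alpha_2\alpha_3$; consequently $u^2=v^2=0$ and $vu=\alpha_2 uv$ in $R_H$, and the unique degree-two generator $uv$ has reduced coproduct $u\otimes v+\alpha_3\, v\otimes u$, involving only the cross terms $u\otimes v$ and $v\otimes u$.

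Finally I would extract the contradiction from the explicit dual coalgebra of form (1): dualizing the products $x\cdot x=x^2=y\cdot y$, $y\cdot x=ax^2$, $x\cdot y=0$ shows that the degree-two generator $(x^2)^*$ of $(\k\langle x,y\rangle/I)^*$ has reduced coproduct $x^*\otimes x^*+y^*\otimes y^*+a\,y^*\otimes x^*$, whose self-tensor (``diagonal'') coefficients are nonzero. Since the isomorphism of Lemma \ref{lem:A=R*ascoalgebra} is one in ${}^{H_0}_{H_0}\mathcal{YD}$, it identifies the primitive spaces as $H_0$-comodules; as $\mathcal{S}{}^1=\{C\}$ and $\mid{}^1\mathcal{P}\mid=1$, this comodule is the unique simple two-dimensional $C$-comodule $\span(\overline{\X})$. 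Thus the image of the reduced coproduct is a one-dimensional sub-comodule of $V\otimes V$, and Schur's lemma forces the spanning ``invariant tensor'' to be unique up to scalar; computing it from the diagonal braiding yields the off-diagonal tensor $u\otimes v+\alpha_3 v\otimes u$, which cannot coincide, in the comodule-adapted basis, with the diagonal-bearing tensor $x^*\otimes x^*+y^*\otimes y^*+a\,y^*\otimes x^*$. I expect the delicate point to be exactly this last comparison: namely, upgrading the apparent ``off-diagonal versus diagonal'' mismatch into a genuine contradiction by showing, through the simplicity of the $C$-comodule and Schur's lemma, that it cannot be absorbed by a change of primitive basis; once this rigidity is secured the incompatibility is immediate and yields $I\neq(x^2-y^2, yx-ax^2, xy)$.
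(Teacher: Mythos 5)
Your argument agrees with the paper's proof up to and including the derivation of the structure constants: the commutation rules read off from the diagonal $K$, the four formulas for $\Delta_{R_H}(u^2),\ \Delta_{R_H}(v^2),\ \Delta_{R_H}(uv),\ \Delta_{R_H}(vu)$, and the conclusion that $\dim_\k R_H(2)=1$ forces $\alpha_1=\alpha_4=-1$, $\alpha_2\alpha_3=1$, $u^2=v^2=0$ and $vu=\alpha_2 uv$ --- all of this is exactly the paper's computation. Where you part ways is the final step. The paper finishes by \emph{dualizing} the relations just obtained: since $R_H$ is spanned by $1,u,v,uv$ and the reduced coproduct of $uv$ is $u\otimes v+\alpha_3 v\otimes u$, one gets $(u^*)^2=(v^*)^2=0$ and $u^*v^*=\alpha_2 v^*u^*$, i.e.\ $R_H^*\cong\k\langle x,y\rangle/(x^2,y^2,xy-\alpha_2 yx)$, so the defining ideal can be presented in form (2) of Theorem \ref{thm:tamestructure}; that is the substance of the lemma and the way Proposition \ref{prop:I=I2} uses it.

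Your replacement for this step --- a contradiction because the ``off-diagonal'' tensor $u\otimes v+\alpha_3 v\otimes u$ allegedly cannot be carried onto the ``diagonal-bearing'' tensor $x^*\otimes x^*+y^*\otimes y^*+a\,y^*\otimes x^*$ by any change of basis of the primitive space --- is a genuine gap, and it cannot be closed, because the rigidity you need is false whenever $a\neq\pm2$. Indeed, in $\k\langle x,y\rangle/(x^2-y^2,\,yx-ax^2,\,xy)$ let $r_1\neq r_2$ be the roots of $\lambda^2+a\lambda+1=0$ and set $x'=r_1x+y$, $y'=r_2x+y$; then $x'^2=y'^2=0$ and $x'y'=b\,y'x'$ with $b=(2+ar_2)/(2+ar_1)$, where $(2+ar_1)(2+ar_2)=4-a^2\neq0$. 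Thus a form-(1) algebra with $a\neq\pm2$ \emph{is} isomorphic, as a graded algebra, to a form-(2) algebra with $m=1$ and $b\neq\pm1$; dualizing this isomorphism gives a linear isomorphism of the degree-one parts carrying $x^*\otimes x^*+y^*\otimes y^*+a\,y^*\otimes x^*$ onto a nonzero multiple of the off-diagonal tensor $b\,x^*\otimes y^*+y^*\otimes x^*$. So the diagonal/off-diagonal dichotomy is not basis-invariant and yields no contradiction. Nor can Schur's lemma rescue it: Lemma \ref{lem:A=R*ascoalgebra} is an isomorphism of coalgebras only, the abstract quotient $\k\langle x,y\rangle/I$ and its dual carry no Yetter--Drinfeld structure to be preserved, and $x^*,y^*$ is not a comodule-adapted basis, so there is no equivariance through which Schur's lemma could act; any argument establishing your rigidity would prove that form-(1) and form-(2) algebras are never isomorphic, which the computation above refutes. (The same overlap shows that the lemma's conclusion has to be read as a normal-form statement --- ``$I$ may be taken of form (2)'' --- which is precisely what the paper's dualization step establishes; a literal, presentation-free exclusion of form (1) for every $a\neq 0$ is not obtainable by these methods.)
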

\begin{proof}
According to Lemma \ref{lem:deltaRH}, we have
\begin{eqnarray}\label{deltauv}
\Delta_{R_H}(uv)=1\otimes uv+uv\otimes 1+\alpha_3 v\otimes u+u\otimes v,
\end{eqnarray}
\begin{eqnarray}\label{deltavu}
\Delta_{R_H}(vu)=1\otimes vu+vu\otimes 1+\alpha_2 u\otimes v+v\otimes u,
\end{eqnarray}
\begin{eqnarray}\label{deltau2}
\Delta_{R_H}(u^2)=1\otimes u^2+u^2\otimes 1+(\alpha_1+1) u\otimes u,
\end{eqnarray}
\begin{eqnarray}\label{deltav2}
\Delta_{R_H}(v^2)=1\otimes v^2+v^2\otimes 1+(\alpha_4+1) v\otimes v.
\end{eqnarray}
If $\dim_\k(R_H)=\dim_{\k}((\k\langle x,y\rangle/(x^2-y^2, yx-ax^2, xy))^*)=4,$ then $u^2, v^2, uv, vu\in \k\{(x^2)^*\}.$
It follows that $\alpha_1=\alpha_4=-1,\;\;\alpha_2=\frac{1}{\alpha_3}.$
Then we have $u^2=v^2=0,\;\;uv=\alpha_2 vu.$
Hence $$R_{H}^*\cong \k\langle x,y\rangle/(x^2, y^2, xy-\alpha_2yx),$$ which indicates that $$I\neq (x^2-y^2, yx-ax^2, xy).$$
\end{proof}

\begin{lemma}\label{lemma:InoI3}
Let $\operatorname{gr}^c(H)\cong (\k\langle x,y\rangle/I)^* \times H_0$ be a finite-dimensional coradically graded Hopf algebra over $\k$ of tame corepresentation type.
Suppose $\mathcal{P}{}^1=\{\X\}$, $\mathcal{S}{}^1=\{C\}$ for some $C\in\mathcal{S}$ with $\dim_{\k}(C)=4$ and the invertible matrix $K$ in Lemma \ref{lem:invertibleK} is diagonal, namely
$$
K= \left(\begin{array}{cccc}
\alpha_1\\
&\alpha_2\\
&&\alpha_3\\
&&&\alpha_4
 \end{array}\right).
$$
If, in addition, $R_H$ is generated by $u, v$, then $I\neq (x^n-y^n, xy, yx)$, where $n\geq 2$.
\end{lemma}
\begin{proof}
 If $n=2$,
     using the same argument as in the proof of Lemma \ref{lemma:InoI1}, we can easily carry out the proof of this lemma. If $n\geq 3$, we know that $$(\k\langle x,y\rangle/I)^*(2)=\k\{(x^2)^*, (y^2)^*\}$$ and
\begin{eqnarray*}
  \Delta((x^2)^*)&=&(x^2)^*\otimes 1+ 1\otimes(x^2)^*+x^*\otimes x^*,\\
   \Delta((y^2)^*)&=&(y^2)^*\otimes 1+ 1\otimes(y^2)^*+y^*\otimes y^*.
  \end{eqnarray*}
Without loss of generality, suppose that
\begin{eqnarray}\label{u}
u&=&k_1x^*+k_2y^*,
\end{eqnarray}
\begin{eqnarray}\label{v}
v&=&k_3x^*+k_4y^*,
\end{eqnarray}
\begin{eqnarray}\label{u2}
u^2&=&k_5(x^2)^*+k_{6}(y^2)^*,
\end{eqnarray}
\begin{eqnarray}\label{v2}
v^2&=&k_7(x^2)^*+k_{8}(y^2)^*,
\end{eqnarray}
\begin{eqnarray}\label{uv}
uv&=&k_9(x^2)^*+k_{10}(y^2)^*,
\end{eqnarray}
\begin{eqnarray}\label{vu}
      vu&=&k_{11}(x^2)^*+k_{12}(y^2)^*,
\end{eqnarray}
where $k_i\in\k$ for $1\leq i \leq 12.$
      By substituting (\ref{u}-\ref{vu}) into (\ref{deltauv}-\ref{deltav2}), we obtain
       \begin{eqnarray*}
      (\alpha_1+1)k_1^2x^*\otimes x^*&=&k_5x^*\otimes x^*,\\
      (\alpha_1+1)k_1k_2x^*\otimes y^*&=&0,\\
      (\alpha_1+1)k_1k_2y^*\otimes x^*&=&0,\\
      (\alpha_1+1)k_2^2y^*\otimes y^*&=&k_{6}y^*\otimes y^*,\\
      (\alpha_4+1)k_3^2x^*\otimes x^*&=&k_7x^*\otimes x^*,\\
      (\alpha_4+1)k_3k_4x^*\otimes y^*&=&0,\\
      (\alpha_4+1)k_3k_4y^*\otimes x^*&=&0,\\
      (\alpha_4+1)k_4^2y^*\otimes y^*&=&k_{8}y^*\otimes y^*.\\
       (\alpha_2+1)k_1k_3x^*\otimes x^*&=&k_9x^*\otimes x^*,\\
      (\alpha_2k_1k_4+k_2k_3)x^*\otimes y^*&=&0,\\
      (\alpha_2k_2k_3+k_1k_4)y^*\otimes x^*&=&0,\\
      (\alpha_2+1)k_2k_4y^*\otimes y^*&=&k_{10}y^*\otimes y^*,\\
      (\alpha_3+1)k_1k_3x^*\otimes x^*&=&k_{11}x^*\otimes x^*,\\
      (\alpha_3k_2k_3+k_1k_4)x^*\otimes y^*&=&0,\\
      (\alpha_3k_1k_4+k_2k_3)y^*\otimes x^*&=&0,\\
      (\alpha_3+1)k_2k_4y^*\otimes y^*&=&k_{12}y^*\otimes y^*.
      \end{eqnarray*}
      Comparing the coefficients of the both side, we have
      \begin{eqnarray}\label{alpha1+1)k1k2=0}
      (\alpha_1+1)k_1k_2=0.
      \end{eqnarray}
      If $k_1=0,$ since $$(\alpha_3k_1k_4+k_2k_3)=0,$$ it follows that $k_2=0$ or $k_3=0,$  which is in contradiction with the fact that $u$ and $v$ are linearly independent.
     A similar argument shows that  $k_i\neq0$ for $1\leq i\leq 4.$ It follows from (\ref{alpha1+1)k1k2=0}) that $\alpha_1=-1.$ Moreover, because of the fact that$$(\alpha_4+1)k_3k_4=0,$$  we obtain $\alpha_4=-1.$
      This indicates that $u^2=v^2=0.$
      We claim that $\alpha_2\neq -1.$ Otherwise $k_9=k_{10}=0.$ Hence $uv=0,$ a contradiction.
      Note that $$\alpha_2(\alpha_2k_2k_3+k_1k_4)-(\alpha_2k_1k_4+k_2k_3)=0,$$
     direct computations shows that
      $\alpha_2=1.$ Using the same argument, we can obtain $\alpha_3=1.$
      Thus we have $uv=vu,$ which is a contradiction to $\dim_{\k}(R_H(2))=2$. The proof is complete.
\end{proof}

\begin{lemma}\label{lemma:InoI4}
Let $\operatorname{gr}^c(H)\cong (\k\langle x,y\rangle/I)^* \times H_0$ be a finite-dimensional coradically graded Hopf algebra over $\k$ of tame corepresentation type.
Suppose $\mathcal{P}{}^1=\{\X\}$, $\mathcal{S}{}^1=\{C\}$ for some $C\in\mathcal{S}$ with $\dim_{\k}(C)=4$ and the invertible matrix $K$ in Lemma \ref{lem:invertibleK} is diagonal, namely
$$
K= \left(\begin{array}{cccc}
\alpha_1\\
&\alpha_2\\
&&\alpha_3\\
&&&\alpha_4
 \end{array}\right).
$$
If, in addition, $R_H$ is generated by $u, v$, then $I\neq (x^2, y^2, (xy)^mx-(yx)^my)$, where $m\geq1$.
\end{lemma}
\begin{proof}
Suppose that
\begin{eqnarray*}
u&=&k_1x^*+k_2y^*,\\
v&=&k_3x^*+k_4y^*,\\
u^2&=&k_5(x^2)^*+k_{6}(y^2)^*,\\
v^2&=&k_7(x^2)^*+k_{8}(y^2)^*,
\end{eqnarray*}
where $k_i\in\k$ for $1\leq i \leq 8.$
Similar to the proof of Lemma \ref{lemma:InoI3}, we have
\begin{eqnarray*}
(\alpha_1+1)k_1^2&=&0,\\
(\alpha_1+1)k_1k_2&=&k_5,\\
(\alpha_1+1)k_1k_2&=&k_6,\\
(\alpha_1+1)k_2^2&=&0,\\
(\alpha_4+1)k_3^2&=&0,\\
(\alpha_4+1)k_3k_4&=&k_7,\\
(\alpha_4+1)k_3k_4&=&k_8,\\
(\alpha_4+1)k_4^2&=&0.
\end{eqnarray*}
It is straightforward to show that $\alpha_1=\alpha_4=-1$ and thus $u^2=v^2=0.$
Since $(uv)^mu, (vu)^mv\in\k\{((xy)^mx)^*\}$, it follows that
\begin{eqnarray}\label{k9}
(uv)^mu=k_9(vu)^mv
\end{eqnarray}
 for some $k_9\in\k.$
Note that
\begin{eqnarray*}
\Delta((uv)^mu)&=&(\Delta(uv))^m\Delta(u)\\
&=&(c_{11}c_{21}\otimes u^2+c_{11}c_{22}\otimes uv+c_{12}c_{21}\otimes vu+c_{12}c_{22}\otimes v^2+uv\otimes1\\
&&+c_{11}v\otimes u+c_{12}v\otimes v+uc_{21}\otimes u+uc_{22}\otimes v)^m(c_{11}\otimes u+c_{12}\otimes v+u\otimes 1),\\
\Delta((vu)^mv)&=&(\Delta(vu))^m\Delta(v)\\
&=&(c_{21}c_{11}\otimes u^2+c_{21}c_{12}\otimes uv+c_{22}c_{11}\otimes vu+c_{22}c_{12}\otimes v^2+vu\otimes1\\
&&+c_{21}u\otimes u+c_{22}u\otimes v+vc_{11}\otimes u+vc_{12}\otimes v)^m(c_{21}\otimes u+c_{22}\otimes v+v\otimes 1).
\end{eqnarray*}
It follows from (\ref{k9}) that $$((uv)^mc_{11}+c_{11}(vu)^m)\otimes u=k_9 ((vu)^mc_{21}+c_{21}(uv)^m)\otimes u.$$
This means that $$((\Pi_R\circ\eta^{-1}) \otimes id)(((uv)^mc_{11}+c_{11}(vu)^m)\otimes u)=k_9((\Pi_R\circ\eta^{-1}) \otimes id)(((vu)^mc_{21}+c_{21}(uv)^m)\otimes u).$$
It turns out that $$((uv)^m+(-1)^m\alpha_3^{m}(vu)^m)=0.$$ This contradicts the fact that $R_H$ is generated by $u,v$ and $$\dim_{\k}(R_H(2m))=\dim_{\k}((\k\langle x,y\rangle/I_2)(2m))=2.$$
Thus $$I\neq (x^2, y^2, (xy)^mx-(yx)^my),$$ where $m\geq1$.
\end{proof}

For our purpose, we need to consider the following combinatorial functors:
\begin{eqnarray*}
H_1(m, l, t)&=&\sum\limits_{0\leq m_1\leq m_2\leq\cdots\leq m_l\leq m-l}t^{\sum_{i=1}^lm_i},\\
H_2(m, l, t)&=&\sum\limits_{0\leq n_1+ n_2+\cdots+ n_l\leq m-l}t^{\sum_{i=1}^l(l+1-i)n_i},\\
H_3(m, l, t)&=&t^{m-l}\sum\limits_{0\leq n_1+ n_2+\cdots+ n_{l-1}\leq m-l}t^{\sum_{i=1}^{l-l}(l-i)n_i}+\sum\limits_{0\leq n_1+ n_2+\cdots+ n_l\leq m-l}t^{\sum_{i=1}^l(l+1-i)n_i}.
\end{eqnarray*}
Here $m, l\in\Bbb{Z}_+, 0< l< m, m_1, \cdots, m_l, n_1, \cdots, n_l\in\Bbb{N}$ and $t$ is an indeterminant.
\begin{lemma}\emph{(}\cite[Lemma 3.1, Proposition 3.2]{HL09}\emph{)}
We have
\begin{itemize}\label{lem:H1=H2=H3}
  \item[(1)]$H_1(m, l, t)=H_2(m, l, t)=H_3(m, l, t);$
  \item[(2)]$H_1(m, l, t)=0$ for all $0< l< m$ if and only if $t$ is an $m$th primitive root of unit.
\end{itemize}
\end{lemma}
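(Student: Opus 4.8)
The plan is to recognise $H_1(m,l,t)$ as the Gaussian binomial coefficient $\binom{m}{l}_{t}$, the generating function counting partitions fitting inside an $l\times(m-l)$ box: a weakly increasing sequence $0\le m_1\le\cdots\le m_l\le m-l$ is precisely such a partition, and $t^{\sum_i m_i}$ records its size. Granting this reading, part (1) becomes two standard reindexings of one and the same generating function, while part (2) follows from the explicit product expression for $\binom{m}{l}_{t}$.

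For the identity $H_1=H_2$ in (1), I would pass to the difference coordinates $n_i:=m_i-m_{i-1}$ (with $m_0:=0$) for $1\le i\le l$. This is a bijection between weakly increasing sequences bounded by $m-l$ and tuples $(n_1,\dots,n_l)\in\N^{l}$ with $n_1+\cdots+n_l\le m-l$, and since $m_j=n_1+\cdots+n_j$ one computes $\sum_{j=1}^{l}m_j=\sum_{i=1}^{l}(l+1-i)n_i$, which is exactly the exponent appearing in $H_2$. For $H_1=H_3$ I would split the sum defining $H_1(m,l,t)$ according to whether the largest part $m_l$ equals its maximal value $m-l$ or is at most $m-l-1$: the first alternative contributes $t^{m-l}H_1(m-1,l-1,t)$ and the second contributes $H_1(m-1,l,t)$, so that $H_1$ satisfies the $t$-Pascal recurrence $H_1(m,l,t)=t^{m-l}H_1(m-1,l-1,t)+H_1(m-1,l,t)$. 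Rewriting each of these two summands back in the difference coordinates above yields precisely the two terms of $H_3$.

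For part (2) I would use the product formula
$$\binom{m}{l}_{t}=\frac{\prod_{k=1}^{m}(t^k-1)}{\prod_{k=1}^{l}(t^k-1)\,\prod_{k=1}^{m-l}(t^k-1)}.$$
If $t$ is a primitive $m$-th root of unity, then for every $0<l<m$ each factor in the denominator has index $k<m$, hence $t^k\neq1$ and the denominator is nonzero, while the numerator contains the vanishing factor $t^m-1$; since $\binom{m}{l}_{t}$ is a polynomial agreeing with this rational expression wherever the denominator is nonzero, we get $H_1(m,l,t)=\binom{m}{l}_{t}=0$. Conversely, suppose $H_1(m,l,t)=0$ for all $0<l<m$. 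Taking $l=1$ gives $1+t+\cdots+t^{m-1}=0$, so $t^m=1$ and $t\neq1$; it then remains only to exclude the possibility that $t$ has order $d$ with $1<d<m$ (necessarily $d\mid m$).

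The main obstacle is exactly this last step, since a proper root of unity makes both the numerator and the denominator of the product formula vanish, so one cannot simply substitute. I would resolve it by evaluating one well-chosen coefficient: writing $m=ad$ and taking $l=d$, the $q$-analogue of Lucas' theorem (equivalently, the cyclotomic factorisation of $\binom{m}{l}_{t}$, whose $\Phi_d$-multiplicity here is $\lfloor m/d\rfloor-\lfloor l/d\rfloor-\lfloor(m-l)/d\rfloor=a-1-(a-1)=0$) gives $\binom{m}{d}_{t}=a=m/d\neq0$ in characteristic $0$, contradicting $H_1(m,d,t)=0$. Hence $t$ must be primitive of order $m$, completing (2). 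Throughout, the only genuinely nontrivial input is this $q$-Lucas evaluation; everything else is bijective bookkeeping on the generating function.
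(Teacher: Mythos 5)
Your proof is correct, but note that the present paper does not actually prove this lemma at all: it is quoted, statement and proof alike, from \cite{HL09}, so there is no in-paper argument to compare against and your proof has to stand on its own — which it does. Your route is the standard $q$-combinatorial one: identifying $H_1(m,l,t)$ with the Gaussian binomial $\binom{m}{l}_t$ (generating function of partitions in an $l\times(m-l)$ box), getting $H_1=H_2$ from the difference substitution $n_i=m_i-m_{i-1}$, and getting $H_1=H_3$ by splitting on whether $m_l=m-l$, i.e.\ the $q$-Pascal recurrence $\binom{m}{l}_t=t^{m-l}\binom{m-1}{l-1}_t+\binom{m-1}{l}_t$. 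For (2), the product formula correctly gives vanishing at a primitive $m$th root (for $0<l<m$ every denominator index is $<m$, while the numerator contains $t^m-1$), and conversely $l=1$ forces $t^m=1$ with $t\neq1$, after which the $q$-Lucas evaluation $\binom{ad}{d}_t=a\neq0$ at a primitive $d$th root of unity (equivalently, the cyclotomic multiplicity $\lfloor m/d\rfloor-\lfloor d/d\rfloor-\lfloor (m-d)/d\rfloor=0$) rules out every proper divisor $d$ of $m$; characteristic $0$ is exactly what makes $a\neq0$, and this is indeed the only nontrivial input.

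One caveat worth recording: your assertion that the two terms of the Pascal recurrence are ``precisely the two terms of $H_3$'' holds only after correcting the paper's displayed definition of $H_3$, which is garbled. The exponent $\sum_{i=1}^{l-l}(l-i)n_i$ must read $\sum_{i=1}^{l-1}(l-i)n_i$, and the second sum must run over $0\leq n_1+\cdots+n_l\leq m-l-1$ rather than $\leq m-l$; as literally printed, the second term of $H_3$ is all of $H_2$, so $H_3$ would equal $t^{m-l}\binom{m-1}{l-1}_t+\binom{m}{l}_t$, which cannot equal $H_1$. Your reading — first term $t^{m-l}H_1(m-1,l-1,t)$ and second term $H_1(m-1,l,t)$, each rewritten in difference coordinates — is the only one under which the lemma is true, so your proof establishes the intended statement of \cite{HL09} rather than the misprinted one.
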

With the help of the preceding lemmas, we can get the main result for case (i).
\begin{proposition}\label{prop:I=I2}
Let $\operatorname{gr}^c(H)\cong (\k\langle x,y\rangle/I)^* \times H_0$ be a finite-dimensional coradically graded Hopf algebra over $\k$ of tame corepresentation type.
Suppose $\mathcal{P}{}^1=\{\X\}$, $\mathcal{S}{}^1=\{C\}$ for some $C\in\mathcal{S}$ with $\dim_{\k}(C)=4$ and the invertible matrix $K$ in Lemma \ref{lem:invertibleK} is diagonal, namely
$$
K= \left(\begin{array}{cccc}
\alpha_1\\
&\alpha_2\\
&&\alpha_3\\
&&&\alpha_4
 \end{array}\right).
$$
If, in addition, $R_H$ is generated by $u, v$, then
\begin{itemize}
  \item[(1)]$I=(x^2, y^2, (xy)^m-a(yx)^m)$ for $0\neq a\in\k$ and $m\geq1$;
  \item[(2)]$\alpha_1=\alpha_4=-1$;
  \item[(3)]$a=(-1)^{m-1}\alpha_2^m$ or $a=(-1)^{m-1}\alpha_3^m$;
  \item[(4)]$\alpha_2\alpha_3$ is an $m$th primitive root of unity.
\end{itemize}
\end{proposition}
\begin{proof}
\begin{itemize}
  \item[(1)]Combining Theorem \ref{thm:tamestructure}, Lemmas \ref{lemma:InoI1}, \ref{lemma:InoI3} and \ref{lemma:InoI4}, we know that $$I=(x^2, y^2, (xy)^m-a(yx)^m)$$ for $0\neq a\in\k$ and $m\geq1$.
  \item[(2)]An argument similar to the one used in the proof of Lemma \ref{lemma:InoI4} shows that $\alpha_1=\alpha_4=-1$ and $u^2=v^2=0.$
  \item[(3)]
Note that \begin{eqnarray*}
\Delta((uv)^m)&=&(\Delta(uv))^m\\
&=&(c_{11}c_{21}\otimes u^2+c_{11}c_{22}\otimes uv+c_{12}c_{21}\otimes vu+c_{12}c_{22}\otimes v^2+uv\otimes1\\
&&+c_{11}v\otimes u+c_{12}v\otimes v+uc_{21}\otimes u+uc_{22}\otimes v)^m,\\
\Delta((vu)^m)&=&(\Delta(vu))^m\\
&=&(c_{21}c_{11}\otimes u^2+c_{21}c_{12}\otimes uv+c_{22}c_{11}\otimes vu+c_{22}c_{12}\otimes v^2+vu\otimes1\\
&&+c_{21}u\otimes u+c_{22}u\otimes v+vc_{11}\otimes u+vc_{12}\otimes v)^m.
\end{eqnarray*}
Besides, in $(\k\langle x,y\rangle/(x^2, y^2, (xy)^m-a(yx)^m))^*$, we have
  \begin{eqnarray*}\Delta(((xy)^m)^*)&=&1\otimes ((xy)^m)^*+x^*\otimes (y(xy)^{m-1})^*+(xy)^*\otimes((xy)^{m-1})^*\\
  &+&\cdots+((xy)^i)^*\otimes ((xy)^{m-i})^*+((xy)^ix)^*\otimes (y(xy)^{m-1-i})^*\\
  &+&+\cdots+((xy)^{m-1}x)^*\otimes y^*+((xy)^m)^*\otimes1\\
  &+&\frac{1}{a}(1\otimes ((yx)^m)^*+y^*\otimes (x(yx)^{m-1})^*+(yx)^*\otimes((yx)^{m-1})^*\\
  &+&\cdots+((yx)^i)^*\otimes ((yx)^{m-i})^*+((yx)^iy)^*\otimes (x(yx)^{m-1-i})^*\\
  &+&+\cdots+((yx)^{m-1}y)^*\otimes x^*+((yx)^m)^*\otimes1).
  \end{eqnarray*}
  Suppose that
  \begin{eqnarray}\label{uprime}
  u=k_1x^*+k_2y^*,
  \end{eqnarray}
  \begin{eqnarray}\label{vprime}
  v=k_3x^*+k_4y^*,
  \end{eqnarray}
  \begin{eqnarray}\label{uvprime}
  (vu)^m=k_5(((xy)^m)^*),
  \end{eqnarray}
  where $k_i\in\k$ for $1\leq i\leq 5.$
  By substituting (\ref{uprime}) and (\ref{vprime}) into (\ref{uvprime}), we obtain
  \begin{eqnarray*}
  (k_1(vu)^{m-1}v+k_3(\alpha_3)^m(-1)^{m-1}u(vu)^{m-1})\otimes x^*&=&k_5\frac{1}{a}((yx)^{m-1}y)^*\otimes x^*,\\
  x^*\otimes (k_1(\alpha_3)^m(-1)^{m-1}(vu)^{m-1}v+k_3u(vu)^{m-1})&=&x^*\otimes k_5((yx)^{m-1}y)^*.
  \end{eqnarray*}
  It follows that $$k_1(\alpha_3)^m(-1)^{m-1}=\frac{1}{a}k_1$$ and $$k_3=k_3(\alpha_3)^m(-1)^{m-1}\frac{1}{a}.$$
  If $k_1=0$ and $k_3\neq0$, then $a=(-1)^{m-1}(\alpha_3)^m.$ If $k_1\neq0$ and $k_3=0$, then $a=(-1)^{m-1}(\alpha_2)^m.$
  If $k_1\neq0$ and $k_3\neq0$, then $a=(-1)^{m-1}(\alpha_3)^m=(-1)^{m-1}(\alpha_2)^m.$
  \item[(4)]We shall adopt the same procedure as in the proof of Lemma \ref{lemma:InoI4}. Suppose that $$(uv)^m=k_6(vu)^m,$$
for some $k_6\in\k$.
It follows from $$\Delta((uv)^m)=k_6\Delta((vu)^m)$$
 that
$$((uv)^{m-1}uc_{21}+c_{11}(vu)^{m-1}v)\otimes u=k_6((vu)^{m-1}vc_{11}+c_{21}(uv)^{m-1}v)\otimes u$$
and
$$((uv)^{m-1}uc_{22}+c_{12}(vu)^{m-1}v)\otimes v=k_6((vu)^{m-1}vc_{12}+c_{22}(uv)^{m-1}u)\otimes v.$$
Thus we have
\begin{eqnarray*}
&&((\Pi_R\circ\eta^{-1}) \otimes id)(((uv)^{m-1}uc_{21}+c_{11}(vu)^{m-1}v)\otimes u)\\
&=&k_0((\Pi_R\circ\eta^{-1}) \otimes id)(k_6((vu)^{m-1}vc_{11}+c_{21}(uv)^{m-1}v)\otimes u),
\end{eqnarray*}
and
\begin{eqnarray*}
&&((\Pi_R\circ\eta^{-1}) \otimes id)(((uv)^{m-1}uc_{22}+c_{12}(vu)^{m-1}v)\otimes v)\otimes u)\\
&=&k_0((\Pi_R\circ\eta^{-1}) \otimes id)(k_6((vu)^{m-1}vc_{12}+c_{22}(uv)^{m-1}u)\otimes v).
\end{eqnarray*}
Direct computations shows that
\begin{eqnarray*}
(-1)^{m-1}\alpha_3^{m}=k_6,
\end{eqnarray*}
\begin{eqnarray*}
1=k_{6}(-1)^{m-1}\alpha_2^{m}.
\end{eqnarray*}
It follows that $(\alpha_2\alpha_3)^m=1.$
Note that for any element $f(u, v)$ generated by $u, v$, we can always write uniquely $\Delta(f(u, v))$ in the following form:
\begin{eqnarray*}
&&f(u, v)\otimes 1+(f(u, v))_u\otimes u+(f(u, v))_v\otimes v+(f(u, v))_{uv}\otimes uv+\cdots\\
&+&(f(u, v))_{(uv)^i}\otimes (uv)^i+ (f(u, v))_{vu}^i\otimes (vu)^i+(f(u, v))_{(uv)^iu}\otimes (uv)^iu\\
&+&(f(u, v))_{(vu)^iv}\otimes (vu)^iv+\cdots .
\end{eqnarray*}
Since $$(uv)^m=(-1)^{m-1}\alpha_3^{m}(vu)^m,$$
it follows that $$(\Pi_R\circ\eta^{-1} \otimes id)\Delta((uv)^m)=(\Pi_R\circ\eta^{-1} \otimes id)\Delta((-1)^{m-1}\alpha_3^{m}(vu)^m).$$
But $\varepsilon(c_{12})=\varepsilon(c_{21})=0,$ we only need to focus on
$$(c_{11}c_{22}\otimes uv+uv\otimes1
+c_{11}v\otimes u+uc_{22}\otimes v)^m$$
and
$$
(c_{22}c_{11}\otimes vu+vu\otimes1
+c_{22}u\otimes v+vc_{11}\otimes u)^m.
$$
Note that for any $0< l< m$, $u$ and $v$ should appear alternately in the left items in $(uv)^m_{(uv)^l}$. By this observation, the items starting with $u$ in $(uv)^m_{(uv)^l}$ are just
$$\sum\limits_{0\leq n_1+ n_2+\cdots +n_{l}\leq m-l}(uv)^{n_{1}}c_{11}c_{22}(uv)^{n_{2}}c_{11}c_{22}\cdots c_{11}c_{22}(uv)^{n_{l}}c_{11}c_{22}(uv)^{n_{l+1}}.$$
But the items starting with $u$ in $(vu)^m_{(vu)^l}$ is 0.
This indicates that
\begin{eqnarray*}
&&\sum\limits_{0\leq n_1+ n_2+\cdots +n_{l}\leq m-l}(uv)^{n_{1}}c_{11}c_{22}(uv)^{n_{2}}c_{11}c_{22}\cdots c_{11}c_{22}(uv)^{n_{l}}c_{11}c_{22}(uv)^{n_{l+1}}\\
&=&\sum\limits_{0\leq n_1+ n_2+\cdots +n_{l}\leq m-l}(\alpha_2\alpha_3)^{n_1}(\alpha_2\alpha_3)^{n_1+n_2}\cdots(\alpha_2\alpha_3)^{n_1+n_2+\cdots +n_l}(c_{11}c_{22})^l\\&&(uv)^{m-l}\\
&=&H_2(m, l, \alpha_2\alpha_3)(c_{11}c_{22})^l(uv)^{m-l}\\
&=&0.
\end{eqnarray*}
Using Lemma \ref{lem:H1=H2=H3}, we know that $\alpha_2\alpha_3$ is an $m$th primitive root of unity.
\end{itemize}
\end{proof}
\begin{corollary}
With the notations in Proposition \ref{prop:I=I2}, if $m\geq2$, then $$c_{11}c_{12}=c_{12}c_{11}=c_{21}c_{22}=c_{22}c_{21}=0.$$
\end{corollary}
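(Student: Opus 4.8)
The plan is to apply the comultiplication $\Delta$ of $\operatorname{gr}^c(H)$ to the two relations $u^2=0$ and $v^2=0$ furnished by Proposition \ref{prop:I=I2}(2). Since $\Delta$ is an algebra map and $u^2=v^2=0$ in $\operatorname{gr}^c(H)$, both $\Delta(u^2)$ and $\Delta(v^2)$ vanish identically in $\operatorname{gr}^c(H)\otimes\operatorname{gr}^c(H)$. The point is that the explicit expansions of $\Delta(u^2)$ and $\Delta(v^2)$ already displayed in the proof of Lemma \ref{lemma:InoI1} contain, among their summands, exactly the terms $c_{11}c_{12}\otimes uv+c_{12}c_{11}\otimes vu$ and $c_{21}c_{22}\otimes uv+c_{22}c_{21}\otimes vu$; once every other summand is shown to vanish, these surviving terms isolate the four products we wish to kill, tensored against $uv$ and $vu$.

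First I would record the commutation relations forced by the hypothesis that $K$ is diagonal. Reading the identity $\C\odot^\prime\X=K(\X\odot\C)$ of Lemma \ref{lem:invertibleK} row by row, with $K=\operatorname{diag}(\alpha_1,\alpha_2,\alpha_3,\alpha_4)$, gives $c_{1j}u=\alpha_1 uc_{1j}$, $c_{2j}u=\alpha_2 uc_{2j}$, $c_{1j}v=\alpha_3 vc_{1j}$ and $c_{2j}v=\alpha_4 vc_{2j}$ for $j\in\{1,2\}$. By Proposition \ref{prop:I=I2}(2) we have $\alpha_1=\alpha_4=-1$, so in particular $c_{11}u=-uc_{11}$, $c_{12}u=-uc_{12}$, $c_{21}v=-vc_{21}$ and $c_{22}v=-vc_{22}$. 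Substituting $u^2=v^2=0$ into the expansion of $\Delta(u^2)$ annihilates the summands $c_{11}^2\otimes u^2$, $c_{12}^2\otimes v^2$ and $u^2\otimes 1$, while the relations just recorded make $(c_{11}u+uc_{11})\otimes u$ and $(c_{12}u+uc_{12})\otimes v$ vanish; what is left is
\begin{equation*}
0=\Delta(u^2)=c_{11}c_{12}\otimes uv+c_{12}c_{11}\otimes vu .
\end{equation*}
Performing the same reduction on $\Delta(v^2)$, now using $c_{21}v=-vc_{21}$ and $c_{22}v=-vc_{22}$, yields $0=c_{21}c_{22}\otimes uv+c_{22}c_{21}\otimes vu$.

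It then remains to separate the two tensor legs, and this is precisely where the hypothesis $m\geq 2$ enters. By Proposition \ref{prop:I=I2}(1) we have $R_H\cong(\k\langle x,y\rangle/I)^*$ with $I=(x^2,y^2,(xy)^m-a(yx)^m)$, and the relation $(xy)^m-a(yx)^m$ lives in degree $2m\geq 4$; hence the degree-two component of $\k\langle x,y\rangle/I$ is spanned by the two independent classes $xy$ and $yx$, and dually $uv$ and $vu$ are linearly independent in $R_H(2)\subseteq\operatorname{gr}^c(H)$. Consequently each of the two displayed identities forces both of its coefficients to vanish, giving $c_{11}c_{12}=c_{12}c_{11}=0$ and $c_{21}c_{22}=c_{22}c_{21}=0$, as claimed. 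I do not expect a genuine obstacle here: the only delicate points are the bookkeeping of which summands survive the substitution (handled by the $\alpha_1=\alpha_4=-1$ relations) and the observation that the independence of $uv$ and $vu$ fails exactly when $m=1$, since then $(xy)-a(yx)$ is already a degree-two relation — which is what makes the restriction $m\geq 2$ indispensable.
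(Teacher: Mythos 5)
Your proposal is correct and follows essentially the same route as the paper's own proof: apply $\Delta$ to the relations $u^2=v^2=0$, observe that all summands of the displayed expansions except $c_{11}c_{12}\otimes uv+c_{12}c_{11}\otimes vu$ and $c_{21}c_{22}\otimes uv+c_{22}c_{21}\otimes vu$ vanish, and then use the linear independence of $uv$ and $vu$ (valid precisely because $m\geq2$ keeps the third relation out of degree two) to kill the coefficients. Your write-up is in fact somewhat more explicit than the paper's, which silently absorbs the $(c_{11}u+uc_{11})\otimes u$-type terms that you eliminate via the relations $c_{1j}u=-uc_{1j}$, $c_{2j}v=-vc_{2j}$ coming from the diagonal $K$ with $\alpha_1=\alpha_4=-1$.
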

\begin{proof}
According to the proof of Proposition \ref{prop:I=I2}, we know that $u^2=v^2=0.$
This means that $\Delta(u^2)=\Delta(v^2)=0.$
Since $m\geq2$, it follows that $uv, vu$ are linearly independent. Thus we have $$c_{11}c_{12}=c_{12}c_{11}=c_{21}c_{22}=c_{22}c_{21}=0.$$
\end{proof}

To conclude this subsection, we have considered case (i) under the assumption that $K$ in Lemma \ref{lem:invertibleK} is a diagonal matrix. Without this assumption, it remains an open problem which ideal in Theorem \ref{thm:tamestructure} occurs. However, if $K$ is given, the same method can be applied.
\subsection{Cases (ii) and (iii)}
\begin{proposition}\label{prop:g=g,gnoh}
Let $\operatorname{gr}^c(H)\cong (\k\langle x,y\rangle/I)^* \times H_0$ be a finite-dimensional coradically graded Hopf algebra over $\k$ of tame corepresentation type.
\begin{itemize}
  \item[(1)]If $\mid\mathcal{P}{}^1\mid=2$ and $\mathcal{S}{}^1=\{\k g\}$ for some $g\in G(H)$, then $I=(x^2, y^2, xy+yx)$;
  \item[(2)]If $\mid\mathcal{P}{}^1\mid=2$ and $\mathcal{S}{}^1=\{\k g, \k h\}$ for some $g, h\in G(H)$, then $I=(x^2, y^2, (xy)^m-a(yx)^m)$.
\end{itemize}
\end{proposition}
\begin{proof}
It follows from Proposition \ref{HtameiffH0tame} that the link-indecomposable component $(\operatorname{gr}(H))_{(1)}$ containing $\k 1$ is of tame corepresentation type.
According to \cite[Proposition 4.14]{YLL23}, in cases (ii) and (iii), $(\operatorname{gr}(H))_{(1)}$ is a pointed Hopf algebra. Therefore, the desired conclusion follows from \cite[Theorems 4.9 and 4.16]{HL09}.
\end{proof}
Indeed, Proposition \ref{prop:g=g,gnoh} can also be obtained by the same reasoning as in the proofs of Lemmas \ref{lemma:InoI1}, \ref{lemma:InoI3}, \ref{lemma:InoI4} and Proposition \ref{prop:I=I2}.
Moreover, using the same argument, we can easily prove the following remark.
\begin{remark}\label{remark:g=g,gnoh}\rm
Let $\operatorname{gr}^c(H)\cong (\k\langle x,y\rangle/I)^* \times H_0$ be a finite-dimensional coradically graded Hopf algebra of tame corepresentation type.
\begin{itemize}
  \item[(1)]If $\mid\mathcal{P}{}^1\mid=2$ and $\mathcal{S}{}^1=\{\k g\}$ for some $g\in G(H)$, suppose that
$$gu=\alpha_1ug+\alpha_2vg,\;\;gv=\alpha_3ug+ \alpha_4 vg$$for some $\alpha_1, \alpha_2, \alpha_3, \alpha_4\in\k.$ Then $\alpha_1=\alpha_4=-1, \alpha_2=\alpha_3=0;$
  \item[(2)]If $\mid\mathcal{P}{}^1\mid=2$ and $\mathcal{S}{}^1=\{\k g, \k h\}$ for some $g, h\in G(H)$, assume that $$gu=\beta_1ug,\; gv=\beta_2vg,\;\;hu=\beta_3uh,\;hv=\beta_4vh$$ for some $\beta_1, \beta_2, \beta_3, \beta_4\in\k.$ Then \begin{itemize}
  \item [(i)]$\beta_1=\beta_4=-1$;
\item [(ii)]$a$ in Proposition \ref{prop:g=g,gnoh} equals $(-1)^{m-1}\beta_2$ or $(-1)^{m-1}\beta_3$;
\item [(iii)]$\beta_2\beta_3$ is an $m$th primitive root of unity.
\end{itemize}
\end{itemize}
\end{remark}
It should be pointed out that the above remark coincides with \cite[Lemma 4.8, Proposition 4.15]{HL09}.

\section{Examples}\label{section7}
In this section, we give several examples of finite-dimensional coradically graded link-indecomposable Hopf algebras over $\k$ with the dual Chevalley property of tame corepresentation type in the following three cases:
 \begin{itemize}
  \item[(i)]$\mid\mathcal{P}{}^1\mid=1$ and $\mathcal{S}{}^1=\{C\}$ for some $C\in\mathcal{S}$ with $\dim_{\k}(C)=4$;
  \item[(ii)]$\mid\mathcal{P}{}^1\mid=2$ and $\mathcal{S}{}^1=\{\k g\}$ for some $g\in G(H)$;
  \item[(iii)]$\mid\mathcal{P}{}^1\mid=2$ and $\mathcal{S}{}^1=\{\k g, \k h\}$ for some $g, h\in G(H)$.
  \end{itemize}

In fact, if $H$ is link-indecomposable, it follows from Lemma \ref{lem:generate} that
the coradical of $H$ is generated by $\{\span(C)\mid C\in {}^1 \mathcal{S}\}\cup\{\span(S(C))\mid C\in {}^1 \mathcal{S}\}.$ In particular, combining \cite[Lemma 2.1]{HL09} and \cite[Proposition 4.14]{YLL23}, we know that $(H_{(1)})_0$ is an abelian group in cases (ii) and (iii).
 According to \cite[Remark 4.10]{HL09}, we have the following lemma.
\begin{lemma}\label{prop:g=gexample}
Let $H$ be the algebra which is generated by $g, u, v$ satisfying the following relations:
$$gu=-ug,\;\;gv=-vg,\;\;uv=-vu,\;\;u^2=v^2=0,\;\;g^n=1,$$
where $n$ is an even number.
Moreover, the coalgebra structure and antipode are given by:
$$\Delta(g)=g\otimes g,\;\;\varepsilon(g)=1,\;\;S(g)=g^{-1},$$
$$\Delta(u)=g\otimes u+u\otimes 1,\;\;\varepsilon(u)=0,\;\;S(u)=-g^{-1}u,$$
$$\Delta(v)=g\otimes v+v\otimes 1,\;\;\varepsilon(v)=0,\;\;S(v)=-g^{-1}v.$$
Then $H$ is a coradically graded Hopf algebra of tame corepresentation type with $\mid\mathcal{P}{}^1\mid=2$ and $\mathcal{S}{}^1=\{\k g\}$.
 Moreover, we have $$H\cong (\k\langle x,y\rangle/(x^2, y^2, xy+yx))^*\times \k\langle g\rangle.$$
\end{lemma}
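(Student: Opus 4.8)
The plan is to recognize $H$ as a Radford biproduct (bosonization) over the group algebra $\k\langle g\rangle\cong\k\mathbb{Z}_n$ and then read off every required property. First I would set $R=\k\langle u,v\rangle/(u^2,v^2,uv+vu)$, the four-dimensional exterior algebra with basis $\{1,u,v,uv\}$, and regard it as a braided Hopf algebra in ${}^{\k\langle g\rangle}_{\k\langle g\rangle}\mathcal{YD}$: the $\mathbb{Z}_n$-grading places $u,v$ in degree $g$, while $g$ acts on both $u$ and $v$ by $-1$. The crucial consistency point, and the reason the hypothesis ``$n$ even'' is needed, is that the conjugation action $g\triangleright u=gug^{-1}=-u$ must satisfy $g^n\triangleright u=u$, i.e. $(-1)^n=1$; evenness of $n$ guarantees this, so the Yetter--Drinfeld datum is well defined and the relations $u^2=v^2=uv+vu=0$ are stable under the braided comultiplication. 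By \cite[Theorem 3]{Rad85} the biproduct $R\times\k\langle g\rangle$ is then a Hopf algebra, and unwinding its smash product and smash coproduct recovers exactly the multiplication, comultiplication and antipode in the statement; alternatively one checks directly that the ideal $(gu+ug,\,gv+vg,\,uv+vu,\,u^2,\,v^2,\,g^n-1)$ is a Hopf ideal of the free construction.

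Next I would install the grading $\deg g=0$, $\deg u=\deg v=1$, so that $H(0)=\k\langle g\rangle$, $H(1)=\k\langle g\rangle u+\k\langle g\rangle v$ and $H(2)=\k\langle g\rangle uv$. Since $\k\langle g\rangle$ is the group algebra of a finite abelian group over an algebraically closed field of characteristic $0$, it is cosemisimple, hence contained in the coradical; from $\Delta(u)=g\otimes u+u\otimes1$ and the analogous formulas for $v$ and $uv$ one sees that the filtration $\bigoplus_{i\le k}H(i)$ is coalgebra-compatible and coincides with the coradical filtration. Thus $H$ is coradically graded with $H_0=\k\langle g\rangle$, and this coradical is a Hopf subalgebra, giving the dual Chevalley property. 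In particular every simple subcoalgebra is the one-dimensional $\k g^i$, so $H$ is pointed.

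Then I would identify $R_H$ and the ideal. Because $H=\bigoplus_i H(i)$ is coradically graded, the projection $\pi\colon H\to H_0$ is the degree-zero projection, and $R_H=\{h:(\id\otimes\pi)\Delta(h)=h\otimes1\}$ is spanned by $1,u,v,uv$; applying Lemma \ref{lem:deltaRH} computes the braided coproduct $\Delta_{R_H}$ on these elements, with $u,v$ primitive and the cross term in $\Delta_{R_H}(uv)$ governed by the data $\alpha_1=\alpha_4=-1$, $\alpha_2=\alpha_3=0$ of Remark \ref{remark:g=g,gnoh}. The resulting coalgebra is the dual of the exterior algebra, so $R_H^*\cong\k\langle x,y\rangle/(x^2,y^2,xy+yx)$ as algebras, equivalently $R_H\cong(\k\langle x,y\rangle/(x^2,y^2,xy+yx))^*$ as coalgebras; note that $(x^2,y^2,xy+yx)$ is precisely ideal (2) of Theorem \ref{thm:tamestructure} with $m=1$ and $a=-1$. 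Combining this with the biproduct decomposition yields $H\cong(\k\langle x,y\rangle/(x^2,y^2,xy+yx))^*\times\k\langle g\rangle$.

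Finally, tameness and the invariants follow formally. Since $\k\langle g\rangle$ is semisimple and $(x^2,y^2,xy+yx)$ is of the admissible form (2), the ``if'' direction of Theorem \ref{thm:tamestructure} shows $H$ is of tame corepresentation type. For the link data, $u$ and $v$ are non-trivial $(\C,1)$-primitive matrices with $\C=(g)$, they span the two-dimensional space $({}^{\k g}H_1{}^{\k1}+H_0)/H_0$, and no other group-like produces such a skew-primitive; hence $\mathcal{S}{}^1=\{\k g\}$ and $|\mathcal{P}{}^1|=2$. I expect the one genuinely computational step to be the third paragraph: verifying by hand that the induced coalgebra structure on $R_H$ dualizes to the exterior-algebra relations with the correct sign $a=-1$, which pins the ideal down as $(x^2,y^2,xy+yx)$ rather than another member of family (2). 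Everything else is bookkeeping or a direct appeal to \cite{Rad85,HL09} and the theorems already established.
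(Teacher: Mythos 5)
Your proposal is correct, but it takes a genuinely different route from the paper. The paper offers no verification at all for this statement: it is justified by a single citation to \cite[Remark 4.10]{HL09}, where this Hopf algebra arises (in dual, basic form) within the classification of tame graded basic Hopf algebras, so well-definedness, the coradical grading, tameness, and the biproduct decomposition are all inherited from that earlier work. You instead rebuild everything from scratch: you realize $H$ as the bosonization $R\times\k\langle g\rangle$ of the quantum linear space $R=\k\langle u,v\rangle/(u^2,y^2,uv+vu)$ (exterior algebra) in ${}^{\k\langle g\rangle}_{\k\langle g\rangle}\mathcal{YD}$, correctly isolating why evenness of $n$ is needed (the $g$-action by $-1$ must satisfy $(-1)^n=1$ to factor through $\Bbb{Z}_n$); you check the grading is the coradical filtration; you compute $R_H=\span\{1,u,v,uv\}$ and its braided coproduct via Lemma \ref{lem:deltaRH}, dualize to get $R_H^*\cong\k\langle x,y\rangle/(x^2,y^2,xy+yx)$, which is ideal (2) of Theorem \ref{thm:tamestructure} with $m=1$, $a=-1$; and you then invoke the ``if'' direction of Theorem \ref{thm:tamestructure} for tameness and read off $\mid\mathcal{P}^1\mid=2$, $\mathcal{S}^1=\{\k g\}$ from the skew-primitives. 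What your approach buys is self-containedness and transparency: it relies only on \cite[Theorem 3]{Rad85} and results proved in this paper, makes the sign and parity constraints visible (consistently with Remark \ref{remark:g=g,gnoh}), and the same template transfers verbatim to the other bosonization examples of Section \ref{section7}; what the paper's citation buys is brevity and placement of the example inside the known classification. Two minor points in your write-up, both cosmetic: the appeal to Remark \ref{remark:g=g,gnoh} is not logically needed (that remark states necessary conditions, whereas here $gu=-ug$, $gv=-vg$ are hypotheses), and the assertion that the grading filtration equals the coradical filtration deserves the one-line check that $uv\notin H_1$, which follows from $\Delta(uv)=g^2\otimes uv+gv\otimes u+ug\otimes v+uv\otimes 1\notin H\otimes H_0+H_0\otimes H$.
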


From \cite[Remark 4.17(2)]{HL09}, we have the following example.
\begin{example}\label{prop:gnohexample}\rm
Let $H$ be the algebra which is generated by $g, h, u, v$ satisfying the following relations:
$$gh=hg,\;\; g^{n_1}=h^{n_2}=1,$$
$$gu=-ug,\;\; gv=\alpha vg,\;\;hu=\beta uh,\;\;hv=-vh,$$
$$u^2=v^2=0, (uv)^m=(-1)^{m-1}\beta^m(vu)^m,$$
where $n_1, n_2\in\Bbb{Z},$ $\alpha\beta$ is an $m$th primitive root of unit and $m\mid l.c.m(n_1, n_2)$.
The coalgebra structure and antipode are given by
$$\Delta(g)=g\otimes g,\;\;\varepsilon(g)=1,\;\;S(g)=g^{-1},$$
$$\Delta(h)=h\otimes h,\;\;\varepsilon(h)=1,\;\;S(h)=h^{-1},$$
$$\Delta(u)=g\otimes u+u\otimes 1,\;\;\varepsilon(u)=0,\;\;S(u)=-g^{-1}u,$$
$$\Delta(v)=h\otimes v+v\otimes 1,\;\;\varepsilon(v)=0,\;\;S(v)=-h^{-1}v.$$
Then $H$ is a coradically graded Hopf algebra of tame corepresentation type with $\mid\mathcal{P}{}^1\mid=2$ and $\mathcal{S}{}^1=\{\k g, \k h\}$.
 Moreover, we have $$H\cong (\k\langle x,y\rangle/(x^2, y^2, (xy)^m-(-1)^{m-1}\beta^m(yx)^m))^*\times \k\langle g, h\rangle.$$
\end{example}

In cases (ii) and (iii), according to Proposition \ref{prop:g=g,gnoh} and Remark \ref{remark:g=g,gnoh}, only certain special ideals of the form $\{(x^2, y^2, (xy)^m-a(yx)^m)\mid 0\neq a\in\k, m\geq 1\}$ can appear. When such an ideal occurs, we can construct coradically graded Hopf algebras of tame corepresentation type over $\k G$ for some $G=G(H)$ as in Examples \ref{prop:g=gexample} and \ref{prop:gnohexample}. However, in case (i), we do not yet know how to find all $H^\prime$ such that $(\k\langle x,y\rangle/I)^* \times H^\prime$ is a Hopf algebra for some special ideals $I$ listed in Theorem \ref{thm:tamestructure}, even when the invertible matrix $K$ in Lemma \ref{lem:invertibleK} is diagonal.

In the following, we give some examples of link-indecomposable coradically graded Hopf algebras of tame corepresentation type over 8-dimensional non-pointed cosemisimple Hopf algebras, where the invertible matrix $K$ in Lemma \ref{lem:invertibleK} is diagonal.

According to \cite[Theorem 2. 13]{Mas95}, we have the following lemma.
\begin{lemma}
Non-pointed $8$-dimensional semisimple Hopf algebras over $\k$ consist of $3$ isomorphic classes, represented by
$$(\k D_8)^*, \;\; (\k Q_8)^*\;\;, H_8,$$
where $D_8=\langle x, y\mid x^4=y^2=1, yx=x^{-1}y \rangle$ is the dihedral group and $Q_8=\langle x, y\mid x^4=1, y^2=x^2, yx=x^{-1}y\rangle$ is the quaternion group. Among these, $H_8$ is the unique one that is neither commutative nor cocommutative, and is generated as an algebra by $x, y, z$ with relations
\begin{eqnarray}
x^2=y^2=1, \;z^2=\frac{1}{2}(1+x+y-xy),\;yx=xy,\;zx=yz,\;zy=xz;
\end{eqnarray}\label{relation1}
the coalgebra structure and antipode are given by:
\begin{eqnarray}\label{relation2}
\Delta(x)=x\otimes x,\; \Delta(y)=y\otimes y,\;\varepsilon(x)=\varepsilon(y)=1,
\end{eqnarray}
\begin{eqnarray}\label{relation3}
\Delta(z)=\frac{1}{2}(1\otimes 1+1\otimes x+ y\otimes 1-y\otimes x)(z\otimes z),\; \varepsilon(z)=1,
\end{eqnarray}
\begin{eqnarray}\label{relation4}
S(x)=x,\; S(y)=y,\; S(z)=z.
\end{eqnarray}
\end{lemma}
According to Lemma \ref{lem:generate}, when we consider link-indecomposable coradically graded Hopf algebras of tame corepresentation type over 8-dimensional non-pointed cosemisimple Hopf algebras, we only need to consider case (i).
\subsection{Hopf algebras of tame corepresentation type over $(\k D_8)^*$}

Let $\{e_{pq}\}_{p=0, 1, 2, 3; q=0, 1}$ be the basis of $(\k D_8)^*$, dual to the basis $\{x^py^q\}_{p=0, 1, 2, 3; q=0, 1}$ of $\k D_8$. The multiplication and unit are given, respectively,  by
\begin{eqnarray}\label{relationd81}
e_{p_1q_1}e_{p_2 q_2}=\delta_{p_1, p_2}\delta_{q_1, q_2}e_{p_1q_1},\;\;
1=\sum\limits_{p, q}e_{pq},
\end{eqnarray}
the coalgebra structure and antipode are given by
\begin{eqnarray}\label{comultid8}
\Delta(e_{pq})=\sum_{\stackrel{p_1+p_2+2q_1p_2\equiv\; p\; mod\; 4}{ q_1+q_2\equiv \;q\; mod\; 2}} e_{p_1q_1}\otimes e_{p_2q_2},\;\;\varepsilon(e_{pq})=\delta_{p, 0}\delta_{q, 0},
\end{eqnarray}
 \begin{eqnarray}\label{Sd8}
S(e_{pq})&=&e_{p^\prime q^\prime}, \text{where } p+p^\prime+2qp^\prime\equiv\; 0\;\; mod\;\; 4,\;\;  q+q^\prime\equiv \;0\;\; mod\;\; 2.
\end{eqnarray}
Clearly,
$X=\sum\limits_{pq}(-1)^pe_{pq},
Y=\sum\limits_{pq}(-1)^qe_{pq}$
are group-like elements of order 2. Let
 $$\C=\left(\begin{array}{cc}
c_{11}&c_{12}\\
c_{21}&c_{22}
\end{array}\right)=
 \left(\begin{array}{cc}
e_{00}-\sqrt{-1}e_{10}-e_{20}+\sqrt{-1}e_{30}&\sqrt{-1}e_{01}+e_{11}-\sqrt{-1}e_{21}-e_{31}\\
-\sqrt{-1}e_{01}+e_{11}+\sqrt{-1}e_{21}-e_{31}&e_{00}+\sqrt{-1}e_{10}-e_{20}-\sqrt{-1}e_{30}
\end{array}\right).$$ It is a basic multiplicative matrix of $C$, where $C=\operatorname{span}\{c_{11}, c_{12}, c_{21}, c_{22}\}$.
Thus the simple subcoalgebras in $(\k D_8)^*$ are $\k1, \k X, \k Y, \k XY,  C$.

Next we construct a link-indecomposable coradically graded Hopf algebra $H$ of tame corepresentaion type over $(\k D_8)^*$ such that the invertible matrix $K$ in Lemma \ref{lem:invertibleK} is diagonal. Namely,
suppose there exists an diagonal invertible matrix $K=(k_{ij})_{4\times 4}$ over $\k$ such that
$$
\C\odot^\prime\X=K(\X\odot\C),
$$
where $\X=
\left(\begin{array}{cc}
u\\
v
 \end{array}\right)$
 is a non-trivial $(\C, \k1)$-primitive matrix,
and $$
K=
\left(\begin{array}{cccc}
\alpha_1\\
&\alpha_2\\
&&\alpha_3\\
&&&\alpha_4
 \end{array}\right)
 .
 $$
According to Proposition \ref{prop:I=I2}, if $R_{H}=\{h\in H\mid (id\otimes\pi)\Delta (h)=h\otimes 1\}$ is generated by $u, v$, we know that $\alpha_1=\alpha_4=-1.$
Since $c_{11}c_{22}+c_{12}c_{21}=1, $ then
\begin{eqnarray*}
(c_{11}c_{22}+c_{12}c_{21})u=-\alpha_2u(c_{11}c_{22}+c_{12}c_{21})
=u(c_{11}c_{22}+c_{12}c_{21}).
\end{eqnarray*}
It follows that $\alpha_2=-1.$
Next we consider
$(c_{11}c_{22}+c_{12}c_{21})v,$
 a similar argument shows that $\alpha_3=-1.$
As a summary, we have
\begin{example}\label{exampleD8}\rm
Let $H$ be a Hopf algebra generated as an algebra by $\{c_{ij}\}_{i=1, 2; j=1, 2}, u, v$ satisfying (\ref{relationd81}) and the following relations:
$$
\C\odot^\prime\X=K(\X\odot\C),
\;\;u^2=v^2=0,\;\; uv+vu=0,$$
where
$$
K=
\left(\begin{array}{cccc}
-1\\
&-1\\
&&-1\\
&&&-1
 \end{array}\right)
 .
 $$
 The coalgebra structure and antipode are given by (\ref{comultid8}), (\ref{Sd8}) and
 $$
 \Delta(u)=c_{11}\otimes u+c_{12}\otimes v+u\otimes 1,\;\;\varepsilon(u)=0,
 $$
 $$
\Delta(v)=c_{21}\otimes u+c_{22}\otimes v+v\otimes 1,\;\;\varepsilon(v)=0,
$$
$$
S(u)=-(e_{00}-\sqrt{-1}e_{30}-e_{20}-\sqrt{-1}e_{10})u-(\sqrt{-1}e_{01}+e_{11}-\sqrt{-1}e_{21}-e_{31})v,
$$
$$
S(v)=-(-\sqrt{-1}e_{01}+e_{11}+\sqrt{-1}e_{21}-e_{31})u-(e_{00}+\sqrt{-1}e_{30}-e_{20}-\sqrt{-1}e_{10})v.
$$
One can show that $H\cong(\k\langle x,y\rangle/(x^2, y^2, (xy)^2+(yx)^2))^*\times (\k D_8)^*$, and it is a link-indecomposable coradically graded Hopf algebra of tame corepresentaion type over $(\k D_8)^*$.
\end{example}

\subsection{Hopf algebras of tame corepresentation type over $(\k Q_8)^*$}
Let $\{e_{pq}\}_{p=0, 1, 2, 3; q=0, 1}$ be the basis of $(\k Q_8)^*$, dual to the basis $\{x^py^q\}_{p=0, 1, 2, 3; q=0, 1}$ of $\k Q_8$. The multiplication and unit are given, respectively,  by
\begin{eqnarray}\label{relationQ8}
e_{p_1q_1}e_{p_2 q_2}=\delta_{p_1, p_2}\delta_{q_1, q_2}e_{p_1q_1},\;\;
1=\sum\limits_{p, q}e_{pq},
\end{eqnarray}
the coalgebra structure and antipode are given by
\begin{eqnarray}\label{comultiQ8}
\Delta(e_{pq})=\sum_{\stackrel{p_1+p_2+2q_1(p_2+q_2)\equiv\; p\; mod\; 4}{ q_1+q_2\equiv \;q\; mod\; 2}} e_{p_1q_1}\otimes e_{p_2q_2}, \;\;\varepsilon(e_{pq})=\delta_{p, 0}\delta_{q, 0},
\end{eqnarray}
 \begin{eqnarray}\label{SQ8}
S(e_{pq})&=&e_{p^\prime q^\prime}, \text{where } p+p^\prime+2q(p^\prime+q^\prime)\equiv\; 0\;\;mod\;\; 4,\;\; q+q^\prime\equiv \;0\;\; mod\;\; 2.
\end{eqnarray}
It is easy to check that elements
$X=\sum\limits_{pq}(-1)^pe_{pq},
Y=\sum\limits_{pq}(-1)^qe_{pq}$
are group-like elements of order 2. Let
$$\C=\left(\begin{array}{cc}
c_{11}&c_{12}\\
c_{21}&c_{22}
\end{array}\right)=
 \left(\begin{array}{cc}
e_{00}+\sqrt{-1}e_{01}-e_{20}-\sqrt{-1}e_{21}&\sqrt{-1}e_{10}+e_{11}-\sqrt{-1}e_{30}-e_{31}\\
\sqrt{-1}e_{01}-e_{11}-\sqrt{-1}e_{30}+e_{31}&e_{00}-\sqrt{-1}e_{01}-e_{20}+\sqrt{-1}e_{21}
\end{array}\right).$$
Then $\C$ is a basic multiplicative matrix of $C$, where $C=\operatorname{span}\{c_{11}, c_{12}, c_{21}, c_{22}\}$.
Thus the simple subcoalgebras in $(\k Q_8)^*$ are $\k1, \k X, \k Y, \k XY,  C$.

Suppose that
there exists an diagonal invertible matrix $K=(k_{ij})_{4\times 4}$ over $\k$ such that
$$
\C\odot^\prime\X=K(\X\odot\C),
$$
where $\X=
\left(\begin{array}{cc}
u\\
v
 \end{array}\right)$
 is a non-trivial $(\C, \k1)$-primitive matrix, and
$$
K=
\left(\begin{array}{cccc}
\alpha_1\\
&\alpha_2\\
&&\alpha_3\\
&&&\alpha_4
 \end{array}\right)
 .
 $$
 Suppose that $R_{H}=\{h\in H\mid (id\otimes\pi)\Delta (h)=h\otimes 1\}$ is generated by $u, v$.
Since $c_{11}c_{22}-c_{12}c_{21}=1, $ an argument similar to the one used in Example \ref{exampleD8} shows that
 $\alpha_i=-1$
 for $1\leq i\leq 4.$

\begin{example}\rm
Let $H$ be a Hopf algebra generated as an algebra by $\{c_{ij}\}_{i=1, 2; j=1, 2}, u, v$ satisfying (\ref{relationQ8}) and the following relations:
$$
\C\odot^\prime\X=K(\X\odot\C),
\;\;u^2=v^2=0,\;\; uv+vu=0,$$
where
  $$
K=
\left(\begin{array}{cccc}
-1\\
&-1\\
&&-1\\
&&&-1
 \end{array}\right)
 .
 $$
 The coalgebra structure and antipode are given by (\ref{comultiQ8}), (\ref{SQ8}) and
 $$
 \Delta(u)=c_{11}\otimes u+c_{12}\otimes v+u\otimes 1,\;\;\varepsilon(u)=0,
 $$
 $$
\Delta(v)=c_{21}\otimes u+c_{22}\otimes v+v\otimes 1,\;\;\varepsilon(v)=0,
$$
$$
S(u)=-(e_{00}+\sqrt{-1}e_{21}-e_{20}-\sqrt{-1}e_{01})u-(\sqrt{-1}e_{30}+e_{31}-\sqrt{-1}e_{10}-e_{11})v,
$$
$$
S(v)=-(\sqrt{-1}e_{30}-e_{31}-\sqrt{-1}e_{10}+e_{11})u-(e_{00}-\sqrt{-1}e_{21}-e_{20}+\sqrt{-1}e_{01})v.
$$
One can show that $H\cong(\k\langle x,y\rangle/(x^2, y^2, (xy)^2+(yx)^2))^*\times (\k Q_8)^*$, and it is a link-indecomposable coradically graded Hopf algebra of tame corepresentaion type over $(\k Q_8)^*$.
\end{example}
\subsection{Hopf algebras of tame corepresentation type over $H_8$}
Note that the simple subcoalgebras in $H_8$ are $\k1, \k c, \k b, \k bc,  C$, where $C=\operatorname{span}\{x, bx, cx, bcx\}$. We give a corresponding basic multiplicative matrix $\C$ of $C$, where
\begin{eqnarray*}
\C=\left(\begin{array}{cc}
c_{11}&c_{12}\\
c_{21}&c_{22}
\end{array}\right)=
\frac{1}{2}\left(\begin{array}{cc}
x+bx&x-bx\\
cx-bcx&cx+bcx
 \end{array}\right).
\end{eqnarray*}
Suppose there exists a link-indecomposable coradically graded Hopf algebra $H$ of tame corepresentaion type over $H_8$ such that the invertible matrix $K$ in Lemma \ref{lem:invertibleK} is diagonal.
Namely, there exists an diagonal invertible matrix $K=(k_{ij})_{4\times 4}$ over $\k$ such that
$$
\C\odot^\prime\X=K(\X\odot\C),
$$
where $\X=
\left(\begin{array}{cc}
u\\
v
 \end{array}\right)$
 is a non-trivial $(\C, \k1)$-primitive matrix, and
$$
K=
\left(\begin{array}{cccc}
\alpha_1\\
&\alpha_2\\
&&\alpha_3\\
&&&\alpha_4
 \end{array}\right)
 .
$$
Consider $\Delta(c_{11}u)$ and $\Delta(uc_{11})$; by Lemma \ref{lem:invertibleK} $c_{21}u, c_{11}v$ are linearly independent, which implies $c_{11}c_{12}=c_{12}c_{11},$ a contradiction.
Thus there exists no link-indecomposable coradically graded Hopf algebra $H$ of tame corepresentaion type over $H_8$ such that the invertible matrix $K$ in Lemma \ref{lem:invertibleK} is diagonal.
However, such a Hopf algebra does exist when $K$ is not diagonal.
\begin{example}\emph{(}\cite[Definition 5.18]{Shi19}\emph{)}\rm
Let $H$ be a Hopf algebra generated as an algebra by $x, y, z, p_1, p_2$ with relations (\ref{relation1}) and
\begin{eqnarray*}
p_1^2=p_2^2=0,\;\;p_1p_2p_1p_2+p_2p_1p_2p_1=0,
\end{eqnarray*}
\begin{eqnarray*}
xp_1=p_1x,\;\; yp_1=p_1y,\;\;xp_2=-p_2x,\;\;yp_2=-p_2y,
\;\;
zp_1=-p_1z,\;\;zp_2=\sqrt{-1}p_2xz.
\end{eqnarray*}
The coalgebra structure and antipode of $H$ are given by (\ref{relation2}-\ref{relation4}) and
\begin{eqnarray*}
\Delta(p_1)=(f_{00}-\sqrt{-1}f_{11})z\otimes p_1+(f_{10}+\sqrt{-1}f_{01})z\otimes p_2+p_1\otimes 1,\;\;\varepsilon(p_1)=0,
\end{eqnarray*}
\begin{eqnarray*}
\Delta(p_2)=(f_{00}+\sqrt{-1}f_{11})z\otimes p_2+(f_{10}-\sqrt{-1}f_{01})z\otimes p_1+p_2\otimes 1,\;\;\varepsilon(p_2)=0,
\end{eqnarray*}
\begin{eqnarray*}
S(p_1)=-z(f_{00}-\sqrt{-1}f_{11})-z(f_{10}+\sqrt{-1}f_{01}) p_2,
\end{eqnarray*}
\begin{eqnarray*}
S(p_2)=-z(f_{00}+\sqrt{-1}f_{11}) p_2-z(f_{10}-\sqrt{-1}f_{01}) p_1,
\end{eqnarray*}
where $f_{ij}=\frac{1}{4}[1+(-1)^ix][1+(-1)^ky], i, j=0, 1$.
We know that $$\X=\left(\begin{array}{cc}
p_1+p_2\\
-\sqrt{-1}(p_1-p_2)
\end{array}\right)$$
is a non-trivial $(\C, 1)$-primitive matrix. In this case,
$$K=\left(\begin{array}{cccc}
-\frac{1}{2}&\frac{\sqrt{-1}}{2}&-\frac{\sqrt{-1}}{2}&\frac{1}{2}\\
-\frac{\sqrt{-1}}{2}&-\frac{1}{2}&-\frac{1}{2}&-\frac{\sqrt{-1}}{2}\\
\frac{\sqrt{-1}}{2}&-\frac{1}{2}&-\frac{1}{2}&\frac{\sqrt{-1}}{2}\\
\frac{1}{2}&\frac{\sqrt{-1}}{2}&-\frac{\sqrt{-1}}{2}&-\frac{1}{2}
\end{array}\right),$$
and we can show that $$H\cong (\k\langle x,y\rangle/(x^2, y^2, (xy)^2+(yx)^2))^*\times H_8.$$ This means that $H$ is of tame corepresentation type.
\end{example}

\section*{Acknowledgments}
The second author was supported by National Key R$\&$D Program of China 2024YFA1013802 and NSFC 12271243.

\section*{Declarations}
\subsection*{Author Contributions}
All authors contributed to all aspects of this project.
\subsection*{Competing interests}
The authors declare no competing interests.
\subsection*{Ethical Approval} Not applicable.
\subsection*{Availability of data and materials}
Not applicable.

\end{document}